\numberwithin{equation}{section}
\newcommand{\intl}{[ \hspace{-0.5mm} [}
\newcommand{\intr}{] \hspace{-0.5mm} ]}
\theoremstyle{plain}
\newtheorem{theorem}[subsection]{Theorem}
\newtheorem{proposition}[subsection]{Proposition}
\newtheorem{lemma}[subsection]{Lemma}
\newtheorem{corollary}[subsection]{Corollary}
\newtheorem{conjecture}[subsection]{Conjecture}
\theoremstyle{definition}
\newtheorem{definition}[subsection]{Definition}
\newtheorem{remark}[subsection]{Remark}
\newtheorem{notation}[subsection]{Notation}
\begin{document}

\title[New bases of some Hecke algebras]{New bases of some Hecke algebras via Soergel bimodules}

\author{Nicolas Libedinsky}
\address{IMJ, Universit\'e Paris-Diderot}
\email{libedinsky@math.jussieu.fr}

\begin{abstract}  For extra-large Coxeter systems ($m(s,r)>3$), we construct a natural and explicit set of Soergel bimodules $D=\{D_w\}_{w\in W}$ such that each $D_w$ contains as a direct summand (or is equal to) the indecomposable Soergel bimodule $B_w$. When decategorified, we prove that $D$ gives rise to a set $\{d_w\}_{w\in W} $ that is actually a basis of the Hecke algebra. This basis is close to the Kazhdan-Lusztig basis and  satisfies a ``positivity condition''. 
\end{abstract}

\maketitle
\today

\section{Introduction}

The understanding of indecomposable Soergel bimodules is a central problem in representation theory,  combinatorics and  knot theory.

 The study of the indecomposable Soergel bimodules is the main algebraic and combinatorial way to attack character formulae problems for algebraic groups : either to prove the Lusztig conjecture (\cite{Lu}) in characteristic bigger than the Coxeter number or even to conjecture what happens in small characteristic.

This study also has a  close relation with the positivity of the coefficients of Kazhdan-Lusztig  polynomials (\cite{KL1}) via Soergel's conjecture, that gives  a conjectural interpretation of that coefficients. This indecomposable bimodules also play an important role in calculating Khovanov-Rozansky homology of links (\cite{K}).

One might see this paper as a first step in the program of explicitly constructing these elusive indecomposable bimodules. We expect that with a refinement of our methods one might find all the indecomposable bimodules, at least for extra-large Coxeter groups.

 Let us be more precise. To any Coxeter system $(W,\mathcal{S})$, we can associate the tensor category \textbf{B} of Soergel bimodules. This is a category of $\mathbb{Z}$-graded bimodules over a polynomial ring $R$ upon which $W$ acts. There exists an isomorphism of rings $\varepsilon :
\mathcal{H}\rightarrow \langle \mathbf{B}\rangle$, where $\mathcal{H}$ is the Hecke algebra of $(W,\mathcal{S})$ and $\langle \mathbf{B}\rangle$ is the split Grothendieck group of \textbf{B}. This map may be used to view \textbf{B} as a categorification of the Hecke algebra.

The indecomposable objects $B_x$ of $ \mathbf{B}$ are parametrized (up to isomorphism and shifts in gradings) by the elements $x\in W$ of the Coxeter group. If $\bar{s}=(s_1,\ldots,s_n)$ is a reduced expression of $x\in W$, then $B_x$ is a direct summand of the bimodule $\theta_{\bar{s}}=\theta_{s_1}\theta_{s_2}\cdots \theta_{s_n}\in \mathbf{B}$, where $\theta_s=R\otimes_{R^s}R$ and $R^s$ is the subspace of $R$ fixed by $s\in \mathcal{S}$.

 For $s,r\in \mathcal{S}$,  we have introduced in the paper \cite{L}, a morphism $f_{sr}$ :  it is  the unique (up to a scalar) degree zero morphism in the space $\mathrm{Hom}_{\mathbf{B}}(\underbrace{\theta_s\theta_r\theta_s\cdots}_{m(s,r)},\underbrace{\theta_r\theta_s\theta_r\cdots}_{m(s,r)})$.  At this point, the following question is quite natural : what happens if we apply ``all possible'' morphisms of the form $\mathrm{id}\otimes f_{sr}\otimes \mathrm{id}$  to $\theta_{\bar{s}}$ ? Quite surprisingly we find that, at least when the Coxeter group is extra-large, the image of this composition of idempotents is a well-defined Soergel bimodule. 

More precisely we consider the graph $\mathcal{R}(x)$ with vertices reduced expressions for $x$, and edges corresponding to braids relations. To each circuit $c$ in $\mathcal{R}(x)$ with starting point $\bar{s}$ which visits every vertex we associate a natural idempotent  $f^e_{\bar{s},c}$ given by composing morphisms of the form $\mathrm{id}\otimes f_{sr}\otimes \mathrm{id}$ along  $c$. We prove that the image
$E_x=$Im($f^e_{\bar{s},c}$) does not depend (up to isomorphism) on the choice of the reduced expression $\bar{s}$, or on the choice of the path $c$.

Although $B_x$ is a direct summand of $E_x$, these two bimodules will rarely be isomorphic. The problem is that we have to examine more  morphisms than those of the form $f_{sr}$. Let us define, for $s,r\in\mathcal{S}$ and $1\leq n \leq m(s,r)$ the element $sr(n)=\underbrace{srs\cdots}_{n}$. We can describe $f_{rs}\circ f_{sr}$ as the unique degree zero idempotent that factors through the indecomposable $B_{sr(m)}$, with $m=m(s,r).$ So, by analogy we define  $f_{sr}^2(n)$ as the unique  idempotent in  End($\underbrace{\theta_s\theta_r\theta_s\cdots}_{n})$  that factors through $B_{sr(n)}$. We remark that $f_{sr}^2(n)$ is not defined over all fields because there are denominators involved in its definition, and this is coherent with the fact that Soergel's conjecture is not true in all characteristics. 

 If $n<m(s,r)$  the space $A_n(s,r)$ of applications from $\underbrace{\theta_s\theta_r\theta_s\cdots}_{n}$ to $\underbrace{\theta_r\theta_s\theta_r\cdots}_{n},$ does not contain a degree zero bimodule morphism. But in another Soergel category $\mathbf{B'}$ where $m(s,r)=n$ there exists a well-defined degree zero morphism $f_{sr}$ that we call $f_{sr}^{\mathbf{B'}}$ ; if we repeat formally the formulas defining $f_{sr}^{\mathbf{B'}}$ we obtain a well-defined map (not a morphism of bimodules) in $A_n(s,r)$. We have in the same way a well-defined degree zero map in $A_n(r,s)$. We believe that the composition of these two ``not morphisms'' is the honest morphism $f_{sr}^2(n)$, and this may provide a way to calculate this morphism.

Now our philosophy is to apply to $\theta_{\bar{s}}$ the $f_{sr}$ morphisms all the times it is possible to apply it, and even \textit{when it is impossible}. By this we mean that, as well as applying the morphisms $f_{sr}$, we also apply the maps $f_{sr}^{\mathbf{B'}}$ for all categories $\mathbf{B'}.$

More rigorously the question is the following :  what happens if we apply  ``all possible'' morphisms of the form $\mathrm{id}\otimes f_{sr}\otimes \mathrm{id}$  and morphisms of the form $\mathrm{id}\otimes f_{sr}^2(n) \otimes \mathrm{id}$ with $n>3$ to $\theta_{\bar{s}}$ ?

The answer is the same as before : in the extra-large case we obtain an idempotent $f^d_{\bar{s},c}$ of End($\theta_{\bar{s}}$) whose image is $D_x=$Im($f^d_{\bar{s},c}$), a bimodule depending only on $x$.

We prove that $B_x$ is a direct summand of $D_x$, who is itself a direct summand of $E_x$, so we have a chain of bimodules in Soergel category : $B_x \subseteq D_x \subseteq E_x\in \mathbf{B}.$ We prove that the two sets $\{\eta(\langle D_x\rangle)\}_{x\in W}$ and $\{\eta(\langle E_x\rangle)\}_{x\in W}$ are bases of the Hecke algebra, where $\eta :\langle \mathbf{B}\rangle
\rightarrow \mathcal{H}$ is by definition the inverse morphism of $\varepsilon$. By general arguments due to Soergel, these bases satisfy the positivity condition : when written in the form $\sum h_wT_w$ then the $h_w$ have positive coefficients.  

The structure of the paper is as follows : In Section 2 we define Soergel bimodules and discuss Soergel's conjecture. In section 3 we give a way to calculate the morphism $f_{sr}$ via symmetric algebras. In section 4 we define $E_x$ and discuss its properties. Finally in section 5 we define $f_{sr}^2(n)$, we define $D_x$ and we discuss its properties.

\section{Soergel conjecture}

\subsection{Soergel category $\mathbf{B}$}
Let $(W,\mathcal{S})$ be a not necessarily finite Coxeter system (with $\mathcal{S}$ a finite set) and $\mathcal{T}\subset W$ the set of reflections in $W$, \textit{i.e.} the orbit of $\mathcal{S}$ under conjugation. Let $k$ be an infinite field of characteristic different from $2$ and $V$ a finite dimensional $k$-representation of $W$. For $w\in W$, we denote by $V^w\subset V$ the set of $w-$fixed points. The following definition might be found in \cite{S3} :

  \begin{definition}
 By a \textit{reflection faithful representation}  of $(W,\mathcal{S})$ we mean a faithful, finite dimensional representation $V$ of $W$ such that for each $w\in W$, the subspace $V^w$ is an hyperplane of $V$ if and only if $w\in\mathcal{T}.$
\end{definition}
 From now on, we consider $V$ a reflection faithful representation of $W$. If $k=\mathbb{R}$, by the results of  \cite{lib}, all the results in this paper will stay true if  we consider $V$ to be the geometric representation of $W$ (still if this representation in not always reflection faithful). So if the reader is not interested in positive characteristic or if he is only interested in the positivity of the coefficients of  Kazhdan-Lusztig polynomials he can suppose from now on that  $k=\mathbb{R}$ and that $V$ is the geometric representation of $W$.

Let $R=R(V)$\label{d1} be the algebra of regular functions on $V$. The algebra $R$ has the following grading : $R=\bigoplus_{i\in
\mathbb{Z}}R_i$ with  $R_2 = V^*$ and $R_i=0$ if $i$ is odd. The 
action of $W$ on $V$ induces an action on $R$. For $s\in\mathcal{S}$ consider the $(R,R)-$bimodule $\theta_s=R\otimes_{R^s} R$, where $R^s$ is the subspace of $R$ fixed by $s$.
 
\begin{definition}
For every graded object $M=\bigoplus_i M_i,$ and every integer $n$, we define the shifted object $M(n)$ by $(M(n))_i=M_{i+n}.$
\end{definition}

\begin{definition}
 Let $\mathcal{R}$ denote the category of all $\mathbb{Z}$-graded $R$-bimodules, which are finitely generated from the right as well as from the left, and where the action of $k$ from the right and from the left is the same.
\end{definition}

Now we can define Soergel bimodule category :

  \begin{definition}
 Soergel's category $\mathbf{B}(W,V)=\mathbf{B}$ is the full subcategory of $\mathcal{R}$ with objects the finite direct sums of direct summands of bimodules of the type ${\theta}_{s_1}\otimes_{{R}}{\theta}_{s_2}\otimes_{{R}}\cdots\otimes_{{R}} {\theta}_{s_n}(d)$ for $(s_1,\ldots, s_n)\in \mathcal{S}^n,$ and $d\in\mathbb{Z}.$ 

By convention, we will denote by ${\theta}_{s_1}{\theta}_{s_2}\cdots  {\theta}_{s_n}$ the $({R},{R})-$bimodule $${\theta}_{s_1}\otimes_{{R}}{\theta}_{s_2}\otimes_{{R}}\cdots\otimes_{{R}} {\theta}_{s_n}\cong {R}\otimes_{{R}^{s_1}}{R}\otimes_{{R}^{s_2}}\cdots \otimes_{{R}^{s_n}}{R}.$$
\end{definition}

\subsection{Soergel categorification and conjecture}

Before we can state Soergel's categorification of the Hecke algebra we will recall what the Hecke algebra and the split Grothendieck group are.

\begin{definition}
Let  $(W, \mathcal{S})$ be a Coxeter system. We define the Hecke algebra $\mathcal{H} = \mathcal{H}(W, \mathcal{S})$
as the $\mathbb{Z} [v,v^{-1}]$-algebra with generators  $\{
 T_{s}
\}_{s\in \mathcal{S}}$, and relations  $$T^{2}_{s}=
v^{-2}+(v^{-2}-1)T_{s} \mathrm{ \ \ for\ all\ \ } s\in \mathcal{S}\ \   \mathrm{and} $$
$$\underbrace{T_{s}T_{r}T_{s}...}_{m(s,r)\, \mathrm{terms}
}=\underbrace{T_{r}T_{s}T_{r}...}_{m(s,r)\, \mathrm{terms} } \ \mathrm{if}\
s,r \in \mathcal{S}\ \mathrm{ and\ }sr \mathrm{\ is\ of\ order\ } m(s,r).$$

If
$x=s_{1}s_{2}\cdots s_{n}$ is a reduced expression of $x$, we define $T_x=T_{s_1}T_{s_2}\cdots T_{s_n}$ ($T_x$ does not depend on the choice of the reduced expression). We will note $q=v^{-2}.$

\end{definition}

\begin{definition}
For every essentially small abelian category $\mathcal{A}$, we define the split Grothendieck 
group $\langle\mathcal{A}\rangle$ : it is the free abelian group generated by the objects of  $\mathcal{A}$ modulo the relations $M=M'+M''$ whenever we have $M\cong M'\oplus M''$. Given an object $A\in \mathcal{A},$ let  $\langle A \rangle$ denote its class in $ \langle \mathcal{A}
\rangle$.
\end{definition}

The following theorem can be found in \cite{S3} :

\begin{theorem}
Let $(W,\mathcal{S})$ be a Coxeter system and $\mathcal{H}$ its Hecke algebra. There exists a unique ring isomorphism
$\varepsilon : \mathcal{H}\rightarrow \langle \mathbf{B}\rangle $
such that $\varepsilon(v)=\langle R(1)\rangle$ and $\varepsilon(T_s+1)=\langle \theta_s \rangle$ for all $ s\in \mathcal{S}. $
\end{theorem}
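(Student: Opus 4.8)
\emph{Strategy.} The plan is to produce the inverse of $\varepsilon$ directly. Uniqueness is immediate, since $\mathcal H$ is generated as a ring by $v$ and the elements $T_s+1$, so a ring homomorphism out of $\mathcal H$ is determined by its values on these. For existence and bijectivity I would not verify the Hecke relations for the proposed images (which would require, for the braid relation, the explicit structure theory of Soergel bimodules over dihedral groups), but instead construct a ring homomorphism $\eta\colon\langle\mathbf B\rangle\to\mathcal H$ via an explicit ``character'' formula, show it is an isomorphism, and set $\varepsilon:=\eta^{-1}$; the identities $\varepsilon(v)=\langle R(1)\rangle$ and $\varepsilon(T_s+1)=\langle\theta_s\rangle$ then follow from the corresponding formulas for $\eta$.

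\emph{The character map.} For $w\in W$, let $R_w$ be the $R$-bimodule equal to $R$ with unchanged left action and with $r'\in R$ acting on the right by multiplication by $w(r')$; it is supported on the graph of $w$ in $V\times V$. The essential structural input --- and the only place where the reflection-faithfulness of $V$ is used --- is that every $M\in\mathbf B$ admits a finite \emph{standard} (or $\Delta$-) filtration whose subquotients are shifts $R_w(k)$. This holds for a Bott--Samelson bimodule $\theta_{s_1}\cdots\theta_{s_n}$ by induction on $n$, and it propagates to all of $\mathbf B$ because the class of $\Delta$-filtered bimodules is closed under direct summands: one checks this after localising at $\mathrm{Frac}(R)$, using that the graphs of distinct elements of $W$ become disjoint there --- which is exactly where faithfulness enters. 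Since $\Delta$-filtrations are additive on direct sums, the graded multiplicity of each $R_w$ depends only on the class in $\langle\mathbf B\rangle$, so we may set $\eta(\langle M\rangle)=\sum_{w\in W}m_w(M)\,T_w$, where $m_w(M)\in\mathbb Z[v,v^{-1}]$ records these multiplicities, a subquotient $R_w(k)$ being weighted by $v^{k+2\ell(w)}$ ($\ell$ the length function). One computes $\eta(\langle R(1)\rangle)=v$, and the filtration $0\to R_s(-2)\to\theta_s\to R_e\to0$ gives $\eta(\langle\theta_s\rangle)=T_s+1$.

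\emph{$\eta$ is an isomorphism.} The crux is that $\eta$ is a ring homomorphism. Since every object of $\mathbf B$ is built from $R$ by iterated tensoring with the $\theta_s$ and passage to summands, it suffices to show that $\otimes_R\theta_s$ corresponds under $\eta$ to right multiplication by $\eta(\langle\theta_s\rangle)=T_s+1$. This follows from the explicit two-step $\Delta$-filtration of $R_w\otimes_R\theta_s$, with subquotients $R_w$ and $R_{ws}(-2)$ when $ws>w$, and $R_{ws}$ and $R_w(-2)$ when $ws<w$, whose shifts are calibrated precisely so that the contributions reproduce $T_w(T_s+1)=T_wT_s+T_w$ in $\mathcal H$ --- using $T_wT_s=T_{ws}$ when $ws>w$ and $T_wT_s=v^{-2}T_{ws}+(v^{-2}-1)T_w$ when $ws<w$. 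Granting this, $\eta$ is surjective because $v$ and the $T_s+1$ generate $\mathcal H$. For injectivity, Krull--Schmidt for $\mathbf B$ and the classification of its indecomposables make $\langle\mathbf B\rangle$ a free $\mathbb Z[v,v^{-1}]$-module on the basis $\{\langle B_x\rangle\}_{x\in W}$; and because $B_x$ occurs with multiplicity one in a reduced Bott--Samelson $\theta_{\bar s}$ of $x$ while every other summand is some $B_y(k)$ with $\ell(y)<\ell(x)$, an induction on length gives $\eta(\langle B_x\rangle)=T_x+\sum_{\ell(y)<\ell(x)}c_y\,T_y$ with $c_y\in\mathbb Z[v,v^{-1}]$. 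Thus $\{\eta(\langle B_x\rangle)\}_{x\in W}$ is again a $\mathbb Z[v,v^{-1}]$-basis of $\mathcal H$, so $\eta$ is bijective; setting $\varepsilon=\eta^{-1}$ finishes the proof.

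\emph{Main obstacle.} The genuinely non-formal ingredients are: (i) the existence of the standard filtration and its stability under direct summands --- this is where reflection-faithfulness is essential and where Soergel's bookkeeping with the bimodules $R_w$ is needed; and (ii) the multiplicativity of $\eta$, which rests on the short-exact-sequence analysis of $R_w\otimes_R\theta_s$. Everything else --- Krull--Schmidt, the classification of indecomposables, the values of $\eta$ on generators, surjectivity, the triangularity estimate and the uniqueness claim --- is routine once these are in place.
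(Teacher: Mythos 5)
Your proposal is correct, but note that the paper does not prove this theorem at all: it is quoted verbatim from Soergel \cite{S3}, so there is no internal proof to compare against. What you have written is essentially a faithful reconstruction of Soergel's own argument --- the character map $\eta$ built from standard filtrations by the $R_w(k)$, multiplicativity via the two-step filtration of $R_w\otimes_R\theta_s$ (your weights $v^{k+2\ell(w)}$ are calibrated correctly against the paper's conventions $\varepsilon(v)=\langle R(1)\rangle$, $\varepsilon(T_s+1)=\langle\theta_s\rangle$), and injectivity from the triangularity of $\eta(\langle B_x\rangle)$ --- and you correctly identify the two non-formal inputs (stability of $\Delta$-filtrations under direct summands, and the classification of indecomposables) as the content of Soergel's Sections 4--6.
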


The following theorem can be found in \cite{KL1}

\begin{theorem}\label{cocoroco}
Let us define in the Hecke algebra the elements $\tilde{T}_x = v^{l(x)} T_x$. There exists a unique involution $d : \mathcal{H} \rightarrow \mathcal{H}$ with $d(v) = v^{-1}$ and $ d(T_x ) = (T_{x^{-1}})^{-1}$. For $x \in W$ there exists a unique $C'_x \in \mathcal{H}$ with $d(C'_x) = C'_x$
 and $$C'_x \in \tilde{T}_x+ \sum_y v\mathbb{Z}[v]\tilde{T}_y$$
 These elements form the so-called Kazhdan-Lusztig basis of the Hecke algebra.
\end{theorem}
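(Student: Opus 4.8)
The plan is to prove the statement in four steps: (1) that $d$ is a well-defined ring involution; (2) that $d$ is unitriangular with respect to the $\mathbb{Z}[v,v^{-1}]$-basis $\{\tilde T_x\}_{x\in W}$; (3) uniqueness of $C'_x$; (4) existence of $C'_x$, after which the basis property is immediate from (2). For step (1), rewrite the quadratic relation as $T_s\,(v^2T_s+v^2-1)=1$, so every $T_s$ is invertible with $T_s^{-1}=v^2T_s+(v^2-1)$; define $d$ on generators by $d(v)=v^{-1}$ and $d(T_s)=T_s^{-1}$. It preserves the quadratic relation by construction, and it preserves each braid relation because inverting $\underbrace{T_sT_rT_s\cdots}_{m(s,r)}=\underbrace{T_rT_sT_r\cdots}_{m(s,r)}$ yields a relation of the same shape; hence $d$ extends to a ring endomorphism of $\mathcal H$, which is an involution since it fixes the generators after squaring. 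An induction on $l(x)$ using a reduced factorization gives $d(T_x)=(T_{x^{-1}})^{-1}$, so $d$ is the map in the statement; in the renormalized basis one gets the closed formula $d(\tilde T_s)=\tilde T_s+(v-v^{-1})\,\tilde T_e$.

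For step (2), since $\tilde T_x=\tilde T_s\tilde T_{x'}$ whenever $x=sx'$ with $l(x)=l(x')+1$, I would prove by induction on $l(x)$, using the formula just obtained, the two-case multiplication rule for $\tilde T_s\cdot\tilde T_y$ (according to whether $sy>y$ or $sy<y$) and the lifting property of the Bruhat order, that $d(\tilde T_x)\in\tilde T_x+\sum_{y<x}\mathbb{Z}[v,v^{-1}]\,\tilde T_y$. Step (3) is then formal: if $h=\sum_y a_y\tilde T_y$ has all $a_y\in v\mathbb{Z}[v]$ and $d(h)=h$, pick $y$ maximal in the Bruhat order with $a_y\neq0$; by unitriangularity the $\tilde T_y$-coefficient of $d(h)$ equals $d(a_y)$, so $d(a_y)=a_y$, impossible for a nonzero element of $v\mathbb{Z}[v]$ since $d$ carries it into $v^{-1}\mathbb{Z}[v^{-1}]$. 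Hence $h=0$; applied to the difference of two candidates this gives uniqueness of $C'_x$, and combined with (2) it shows $\{C'_x\}_{x\in W}$ is unitriangular over $\{\tilde T_x\}_{x\in W}$, hence a $\mathbb{Z}[v,v^{-1}]$-basis of $\mathcal H$.

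For step (4) I would induct on the Bruhat order, with $C'_e=\tilde T_e=1$. Assuming $C'_y$ is known for all $y<x$, seek $C'_x=\tilde T_x+\sum_{y<x}p_y\tilde T_y$ with $d(C'_x)=C'_x$ and every $p_y\in v\mathbb{Z}[v]$; expanding $d(C'_x)$ through the $C'_y$ and the coefficients of the $d(\tilde T_u)$, self-duality becomes, row by row as $z$ descends in the Bruhat order, a system of equations $p_z-d(p_z)=U_z$ in which $U_z$ is a Laurent polynomial already determined by the $p_u$ with $z<u\le x$ (with $p_x:=1$). A nested induction on the length of the interval $[z,x]$ shows that $U_z$ satisfies $d(U_z)=-U_z$, and any such $U_z$ admits a unique solution $p_z\in v\mathbb{Z}[v]$: write $U_z=\sum_{k>0}u_k(v^k-v^{-k})$ and take $p_z=\sum_{k>0}u_kv^k$. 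This produces $C'_x$ and closes the induction. Concretely, choosing $s\in\mathcal S$ with $sx<x$ and using $C'_s=\tilde T_s+v\tilde T_e$ together with $C'_s\tilde T_y=\tilde T_{sy}+v\tilde T_y$ for $sy>y$ and $C'_s\tilde T_y=\tilde T_{sy}+v^{-1}\tilde T_y$ for $sy<y$, the classical Kazhdan--Lusztig recursion $C'_x=C'_sC'_{sx}-\sum_{z<sx,\,sz<z}\mu(z,sx)\,C'_z$, with $\mu(z,sx)$ the coefficient of $v$ in the $\tilde T_z$-coefficient of $C'_{sx}$, realizes this construction.

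The proof has no single dramatic step; the parts demanding care are the Bruhat-order bookkeeping behind the unitriangularity in (2) and, above all, the nested induction in (4) guaranteeing at each stage that the equation $p_z-d(p_z)=U_z$ is consistent, i.e. that $d(U_z)=-U_z$. In the concrete incarnation the latter is precisely the degree estimate that, classically, forces the off-diagonal coefficients into $v\mathbb{Z}[v]$ and governs the degrees of the Kazhdan--Lusztig polynomials; once it is in place everything else is routine.
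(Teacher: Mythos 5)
Your proof is correct and complete: it is the classical Kazhdan--Lusztig argument (well-definedness and involutivity of $d$, unitriangularity of $d$ on the basis $\{\tilde T_y\}$, the $v\mathbb{Z}[v]$ versus $v^{-1}\mathbb{Z}[v^{-1}]$ rigidity for uniqueness, and the downward induction solving $p_z-d(p_z)=U_z$ for existence, equivalently the recursion via $C'_sC'_{sx}$). The paper offers no proof of this statement --- it simply cites \cite{KL1} --- and your argument coincides with the standard one found there, so there is nothing to compare beyond noting that you have supplied the proof the paper delegates to its reference.
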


Now we can state Soergel's conjecture.
\begin{conjecture}[Soergel]\label{cs}
For every  $x\in W$, there exists an indecomposable $\mathbb{Z}-$graded $R$-bimodule  $B_x\in \mathcal{R}$ such that $\varepsilon(C'_x)=
\langle B_x\rangle$.
\end{conjecture}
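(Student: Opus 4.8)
Since the statement above is Soergel's conjecture, what follows is the structure of a proof together with the one step on which everything rests. The plan is to induct on $l(x)$. For $l(x)=0$ one takes $B_e=R$ and notes $\varepsilon(C'_e)=\varepsilon(\tilde T_e)=\langle R\rangle$. Assume the statement for all elements of length $<l(x)$, choose $s\in\mathcal S$ with $xs<x$, and set $w=xs$, so $l(w)=l(x)-1$ and $x=ws>w$; write $B_s$ for the indecomposable attached to $s$, which must be $\theta_s(1)$ since $\langle\theta_s(1)\rangle=\varepsilon(v(T_s+1))=\varepsilon(C'_s)$ (for $l(x)=1$ this already gives the conclusion). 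The Kazhdan--Lusztig multiplication rule (\cite{KL1}), valid over $\mathbb Z[v,v^{-1}]$, reads for $x=ws>w$
$$C'_w\,C'_s\;=\;C'_x\;+\;\sum_{zs<z}\mu(z,w)\,C'_z ,$$
the sum over $z\in W$ with $zs<z$, with $\mu(z,w)\in\mathbb Z_{\ge0}$ vanishing unless $z<w$ (so every $z$ occurring has $l(z)<l(w)<l(x)$). Applying the ring isomorphism $\varepsilon$ and the induction hypothesis yields
$$\langle B_w\otimes_R B_s\rangle\;=\;\varepsilon(C'_w)\,\varepsilon(C'_s)\;=\;\varepsilon(C'_x)\;+\;\sum_{zs<z}\mu(z,w)\,\langle B_z\rangle .$$
On the other hand, by Krull--Schmidt in $\mathbf B$ and the parametrization of its indecomposables recalled above, $B_w\otimes_R B_s\cong\bigoplus_i B_{z_i}(n_i)$ for certain $z_i\in W$, $n_i\in\mathbb Z$, so $\langle B_w\otimes_R B_s\rangle=\sum_i\varepsilon(v^{n_i})\langle B_{z_i}\rangle$; and the classes $\{\langle B_y\rangle\}_{y\in W}$ are a $\mathbb Z[v,v^{-1}]$-basis of $\langle\mathbf B\rangle$ unconditionally (Soergel's theory: they are unitriangular relative to the $\langle\theta_{\bar s}\rangle$). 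Hence the two expansions agree coefficientwise.

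Granting that the decomposition of $B_w\otimes_R B_s$ is \emph{perverse} and of the expected shape,
$$B_w\otimes_R B_s\;\cong\;B_x\;\oplus\;\bigoplus_{zs<z}B_z^{\oplus\mu(z,w)}\qquad(\text{no grading shifts}),$$
the coefficientwise comparison forces $\varepsilon(C'_x)=\langle B_x\rangle$, which is the claim for $x$ and closes the induction. Establishing this decomposition is the whole content of the theorem, and the reduction proceeds in two formal steps. First, $B_w\otimes_R B_s$ is self-dual (the standard duality on $\mathbf B$ fixes every $B_y$, commutes with $-\otimes_R B_s$, and intertwines with the bar-involution $d$), so $B_z(n)$ and $B_z(-n)$ occur with equal multiplicity; thus it is enough to exclude strictly negative shifts, i.e. $\mathrm{Hom}^{<0}_{\mathbf B}(B_z,\,B_w\otimes_R B_s)=0$ for all $z$, together with multiplicity one for $B_x$ in degree $0$. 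Second, since $\theta_s\otimes_R-$ is biadjoint to itself up to a shift, one rewrites $\mathrm{Hom}^{\bullet}_{\mathbf B}(B_z,\,B_w\otimes_R B_s)$ in terms of $B_z\otimes_R B_s$ paired against $B_w$, reducing to data of length $\le l(w)$; combined with Soergel's results on $\mathrm{Hom}$-spaces this turns the required vanishing into a positivity statement for the Hecke-algebra pairing of Kazhdan--Lusztig elements, that is, into Kazhdan--Lusztig positivity itself.

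The main obstacle is therefore exactly this last input, and no formal argument can supply it: as the paper already observes, the perversity/multiplicity-one statement --- equivalently, non-negativity of the coefficients of Kazhdan--Lusztig polynomials --- genuinely fails in bad characteristic. In characteristic zero (in particular for $k=\mathbb R$ with the geometric representation, by \cite{lib}) it can be obtained in two ways. For a Weyl group, $B_x$ is realised by the equivariant intersection cohomology of the Schubert variety attached to $w$, the displayed decomposition is the Decomposition Theorem applied to a Bott--Samelson resolution, and the statement reduces to the (now classical) Kazhdan--Lusztig conjecture. For a general Coxeter system one follows Elias and Williamson and develops the algebraic Hodge theory of Soergel bimodules in the spirit of de Cataldo and Migliorini's proof of the Decomposition Theorem: one runs a single induction on $l(x)$ proving simultaneously Soergel's conjecture for $x$, hard Lefschetz for $B_x$, and the Hodge--Riemann bilinear relations on the primitive parts of $B_x$, the Hodge--Riemann relations in length $l(w)$ furnishing precisely the sign and nondegeneracy that force the decomposition of $B_w\otimes_R B_s$ to be perverse and hence advance the induction. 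Constructing the intersection forms on Bott--Samelson and Soergel bimodules and propagating the Hodge--Riemann relations is where all the difficulty is concentrated; the Hecke recursion, Krull--Schmidt, the self-duality, and the adjunction manipulations are formal scaffolding around it.
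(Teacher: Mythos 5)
The statement you are asked to prove is stated in the paper as Conjecture \ref{cs} (Soergel's conjecture) and is nowhere proved there: the paper only uses it as motivation and works with weaker, unconditional objects ($E_w$, $D_w$) precisely because the conjecture was open. So there is no proof in the paper to compare against, and your proposal should be judged on its own terms as an attempted proof.

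On those terms there is a genuine gap, and you have in fact named it yourself. The inductive scaffolding is sound: the Kazhdan--Lusztig recursion $C'_wC'_s=C'_x+\sum_{zs<z}\mu(z,w)C'_z$, Krull--Schmidt in $\mathbf B$, the fact that $\{\langle B_y\rangle\}$ is a $\mathbb Z[v,v^{-1}]$-basis of $\langle\mathbf B\rangle$, self-duality of $B_w\otimes_R B_s$, and the biadjunction of $\theta_s$ are all available and correctly deployed. But the entire content of the theorem is the perversity statement $B_w\otimes_R B_s\cong B_x\oplus\bigoplus_{zs<z}B_z^{\oplus\mu(z,w)}$ with no shifts, and your argument does not establish it: you reduce it to the vanishing $\mathrm{Hom}^{<0}(B_z,B_w\otimes_R B_s)=0$ plus multiplicity one, and then appeal either to the Decomposition Theorem (which only covers Weyl/crystallographic cases and is external geometric input) or to the Hodge--Riemann bilinear relations for Soergel bimodules in the style of Elias--Williamson, which you cite but do not construct. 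Neither input exists in this paper's toolkit, and the second is a substantial theory in its own right (intersection forms on Bott--Samelson bimodules, hard Lefschetz, signs on primitive parts) whose construction is the actual proof. So what you have written is an accurate road map of how the conjecture was eventually settled, not a proof; as a submission for this statement it leaves the decisive step as an unproved black box, which is exactly the status the paper itself assigns to the conjecture.
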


\begin{remark}
In \cite{S3} Soergel proves that this conjecture  implies the positivity of all coefficients of all Kazhdan-Lusztig polynomials. By the work of Soergel \cite{S2} and Fiebig \cite{Fi} we know that for $W$ a finite Weyl group and char($k$) at least the Coxeter number, this conjecture is equivalent to  Lusztig's conjecture concerning characters of irreducible representations of algebraic groups over $k$.
\end{remark}

\section{A categorification of the braid relation}\label{olb}
\subsection{Definition of $f_{sr}$}
In this section we will consider $s,r\in \mathcal{S}$, $s\neq r$, with $m(s,r)=m \neq \infty$.  
We put 

\begin{notation}\label{nadal}
$t=\begin{cases}s\ \mathrm{if}\ m\ \mathrm{is\ odd}\\ r\ \mathrm{if}\ m\ \mathrm{is\ even} \end{cases}$ and \hspace{0.3cm} $u=\begin{cases}r\ \mathrm{if}\ m\ \mathrm{is\ odd}\\ s\ \mathrm{if}\ m\ \mathrm{is\ even} \end{cases}$
\end{notation}

and
 $$X_{sr}:=\underbrace{\theta_s\theta_r\theta_s\cdots \theta_t}_{m(s,r)\ \mathrm{terms}} \,\ \  \ \ \ \  \  \ \, X_{rs}:=\underbrace{\theta_r\theta_s\theta_r\cdots \theta_u}_{m(s,r)\ \mathrm{terms}}.$$\label{lab0} 
\begin{definition}
 Let $s\neq r\in \mathcal{S}$ with $m(s,r)\neq \infty.$
We will call $DZ_{sr}$ the space of  degree zero morphisms of $(R,R)$-bimodules from $X_{sr}$ to $X_{rs}$.
\end{definition}

The following proposition can be found in \cite[prop. 4.3]{L} :

  \begin{proposition}\label{literato} Let $s\neq r\in \mathcal{S}$ with $m(s,r)\neq \infty.$
The space $DZ_{sr}$ is one-dimensional.
\end{proposition}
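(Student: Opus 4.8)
The plan is to compute the dimension of $\mathrm{Hom}_{\mathbf{B}}(X_{sr}, X_{rs})$ in degree zero by reducing everything to a question about $R$-modules via the standard adjunction/Frobenius machinery available for the bimodules $\theta_s$. First I would recall that each $\theta_s = R\otimes_{R^s}R$ has left and right adjoint equal to itself (up to the grading shift by $\pm 1$), because $R$ is free of rank $2$ over $R^s$ and the trace form gives a perfect pairing. Iterating, the functor $X_{sr}\otimes_R(-)$ has an explicit adjoint, so that $\mathrm{Hom}_{(R,R)}(X_{sr},X_{rs})$ is naturally isomorphic (with an appropriate shift) to $\mathrm{Hom}_{(R,R)}(X_{sr}\otimes_R X_{rs}^{\vee}\text{-side stuff}, R)$, and ultimately to a Hom-space of the form $\mathrm{Hom}_{R^?}(R, R)$ or a graded piece of $X_{sr}$ regarded as a left $R$-module. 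Concretely, the cleanest route is: $\dim_k DZ_{sr}^{(0)} = \dim_k \left(\mathrm{Hom}_{(R,R)}(X_{sr},X_{rs})\right)_0$, and since $X_{rs}$ is a sum of shifts of indecomposable Soergel bimodules whose decomposition is governed by the Hecke algebra, one can instead pass to the Grothendieck group.

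The key computational input is that $\langle X_{sr}\rangle = \langle X_{rs}\rangle = \varepsilon(\underbrace{(T_s+1)(T_r+1)\cdots}_{m})$ in $\langle\mathbf{B}\rangle$, because the braid relation $\underbrace{T_sT_rT_s\cdots}_m = \underbrace{T_rT_sT_r\cdots}_m$ in $\mathcal{H}$ forces the two products $\underbrace{(T_s+1)(T_r+1)\cdots}_m$ and $\underbrace{(T_r+1)(T_s+1)\cdots}_m$ to be equal as well (expand and use the braid relation on each sub-word). Hence $X_{sr}$ and $X_{rs}$ have the same class in the split Grothendieck group, so by Soergel's hom formula — the graded rank of $\mathrm{Hom}_{\mathbf{B}}(M,N)$ as a free left $R$-module equals the standard pairing $\langle \overline{\langle M\rangle}, \langle N\rangle\rangle$ of their classes in $\mathcal{H}$ — the graded dimension of $\mathrm{Hom}_{\mathbf{B}}(X_{sr},X_{rs})$ equals that of $\mathrm{End}_{\mathbf{B}}(X_{sr})$. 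The degree-zero part of the latter is one-dimensional precisely when $X_{sr}$ contains $B_{sr(m)}$ with multiplicity one and no other indecomposable with a degree-zero endomorphism contribution; this multiplicity-one statement is exactly the leading-term normalization of the Kazhdan–Lusztig basis (the coefficient of $\tilde T_{sr(m)}$ in $C'_{sr(m)}$ is $1$), together with the fact that every other $B_y$ appearing in $X_{sr}$ occurs only with shifts that contribute in strictly positive degrees to the self-Hom.

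The main obstacle I anticipate is making the last step fully rigorous without circularity: Soergel's hom formula and the splitting of $\theta_{\bar s}$ into indecomposables $B_y$ with the "correct" graded multiplicities are available unconditionally (this is Soergel's theorem, not his conjecture — the conjecture only pins down which $C'_x$ corresponds to $B_x$, whereas here we only need the degree bounds coming from local intersection cohomology positivity or, in the $k=\mathbb R$ geometric case, from the decomposition theorem). So the real work is a bookkeeping argument: show that in the decomposition $X_{sr}\cong B_{sr(m)}\oplus\bigoplus_{y< sr(m)} B_y^{\oplus \text{(shifts)}}$, every shifted summand $B_y(j)$ with $y\neq sr(m)$ has $j\neq 0$, so it cannot contribute to $\left(\mathrm{End}\, X_{sr}\right)_0$, and that $B_{sr(m)}$ appears exactly once unshifted, giving $\dim \left(\mathrm{End}\,X_{sr}\right)_0 = 1$. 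For extra-large (or general finite-dihedral) $\langle s,r\rangle$ this decomposition is classical and explicit, so the bound is easy to verify by hand; alternatively one invokes \cite[prop.~4.3]{L} directly, as the statement here simply restates it. Combining with the Grothendieck-group identity $\langle X_{sr}\rangle = \langle X_{rs}\rangle$ yields $\dim_k DZ_{sr} = \dim_k \left(\mathrm{End}_{\mathbf{B}}(X_{sr})\right)_0 = 1$, as claimed.
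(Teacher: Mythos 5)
There is a genuine error at the heart of your argument. The identity $\underbrace{(1+T_s)(1+T_r)\cdots}_{m}=\underbrace{(1+T_r)(1+T_s)\cdots}_{m}$ is \emph{false} for $m\geq 3$: the braid relation only identifies the top terms, while the lower terms produce squares. For $m=3$ one computes
$$(1+T_s)(1+T_r)(1+T_s)=(1+q)+(1+q)T_s+T_r+T_{sr}+T_{rs}+T_{srs},$$
which differs from $(1+T_r)(1+T_s)(1+T_r)$ by $q(T_s-T_r)\neq 0$. Consequently $\langle X_{sr}\rangle\neq\langle X_{rs}\rangle$ in $\langle\mathbf{B}\rangle$ (this is also forced by Krull--Schmidt: $\theta_s\theta_r\theta_s\cong B'_{srs}\oplus B'_s$ while $\theta_r\theta_s\theta_r\cong B'_{srs}\oplus B'_r$). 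Your reduction to $\dim(\mathrm{End}\,X_{sr})_0$ therefore has no basis, and the target of that reduction is itself wrong: for any decomposable Soergel bimodule the degree-zero endomorphisms contain the identity of each unshifted indecomposable summand, so $\dim(\mathrm{End}\,X_{sr})_0\geq 2$ as soon as $m\geq 3$. The correct phenomenon is precisely that $X_{sr}$ and $X_{rs}$ share \emph{only} the summand $B'_{sr(m)}$ ``in degree zero,'' which is why $\mathrm{Hom}$ is smaller than $\mathrm{End}$.

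The strategy is salvageable, and the repaired version is essentially what the paper does later for the generalization (proposition \ref{lumpen}): use the adjunction of lemma \ref{bijection} to identify $\mathrm{Hom}(X_{sr},X_{rs})$ with $\mathrm{Hom}(X_{sr}\theta_u\cdots\theta_s\theta_r,R)$ up to a shift by $2m$, then compute its graded rank via lemma \ref{grados} as $\tau\bigl(\underbrace{(1+T_s)(1+T_r)\cdots}_{m}\,\underbrace{\cdots(1+T_s)(1+T_r)}_{m}\bigr)$, and check (a lemma \ref{Hecke}--type degree estimate) that the coefficient of $q^{m}$ is $1$ and that no $q^{i}$ with $i>m$ occurs; this gives exactly one free generator in degree zero and none in negative degree, hence $\dim_k DZ_{sr}=1$. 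Note also that the paper itself does not prove this proposition but imports it from \cite[prop.\ 4.3]{L}, so any proof you supply must stand on its own; as written, yours does not.
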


\begin{definition} Recall that $\mathcal{T} \subseteq W$ is the subset of reflections of $W$. For each $t\in\mathcal{T}$, let $Y_t$ be the subset of $ V^{\ast}$ of linear forms with kernel equal to the hyperplane fixed by $t$. 
\end{definition}
It is clear that if  $y, y'\in Y_t$, then there exists $0\neq\lambda \in k$ such that $y=\lambda y'.$ Let $s, r \in \mathcal{S}.$ If we chose one element $x_s\in Y_s$ and one element $x_r\in Y_r$, by \cite[lemma 4.7]{L} there exists a unique  element $f_{sr}\in DZ_{sr} $  with 
$$f_{sr}( 1\otimes x_s\otimes x_r\otimes x_s\otimes \cdots \otimes x_t)\in 1\otimes x_r\otimes x_s\otimes x_r\otimes \cdots \otimes x_u +R_+X_{rs}. $$
Where $R_+$ is the ideal of $R$ generated by the homogeneous elements of non zero degree.  We insist in the fact that $f_{sr}$ depends on the choice of $x_s$ and of $x_r$.

\subsection{A formula for $f_{sr}$}

In this section we find a formula for $f_{sr}$, using the fact that  $R$ is a symmetric algebra over $R^{\langle s,r\rangle}$. We fix a dihedral Coxeter system  $(W,\mathcal{S})$ with $\mathcal{S}=\{s,r\}$ and $m=m(s,r)$. We will suppose that the order of $W$ is invertible in the field $k$.

In this section we will fix, for every $t\in \mathcal{T}$, an element $x_t\in Y_t$.
For all $s\in\mathcal{S}$ we define $\partial_s: R\rightarrow R$ as follows : $\partial_s(a)=(a-s\cdot a)/2x_s$. 
We recall the following theorem  (see \cite[Thm. 2]{De}) :

\begin{theorem}\label{erudito}
\begin{enumerate}
 \item If $w\in W$ and $(s_1,\ldots, s_n)$ is a reduced expression of $w$ then the element $\partial_w=\partial_{s_1}\cdots \partial_{s_n}$\label{H0} depends only on $w$ ; it does not depend on the choice of the reduced expression. 
\item If  $d=\prod_{t\in\mathcal{T}}x_t,$\label{H1}  then the set $\{\partial_w(d)\}_{w\in W} $ is a basis of $R$ as $R^W$-module.
\end{enumerate}
\end{theorem}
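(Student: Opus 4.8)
\emph{Plan.} I would prove part (1) first, deduce part (2) from it, and isolate at the end the two places where a genuine computation is needed. For (1): since $W$ is dihedral, every $w\neq w_0$ (the longest element) has a unique reduced word, while $w_0$ has exactly the two reduced words $\underbrace{srs\cdots}_{m}$ and $\underbrace{rsr\cdots}_{m}$, so (1) is equivalent to the single braid identity $\partial_s\partial_r\partial_s\cdots=\partial_r\partial_s\partial_r\cdots$ ($m$ factors on each side). I would first record the standard facts: $\partial_s$ is homogeneous of degree $-2$; it satisfies the twisted Leibniz rule $\partial_s(ab)=\partial_s(a)\,b+(s\cdot a)\,\partial_s(b)$ and is in particular $R^s$-linear, hence $R^W$-linear; and $\partial_s^2=0$, because $\partial_s(a)$ is $s$-invariant. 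The braid identity itself I would prove by a direct computation in the twisted group algebra $\operatorname{Frac}(R)\rtimes W$: writing $\partial_s=\frac{1}{2x_s}(1-s)$ and expanding each $m$-fold product as $\sum_{w\in W}q_w w$ with $q_w\in\operatorname{Frac}(R)$, one checks that the two coefficient families agree; equivalently, discarding the trivial summand $V^W$ (on which every $\partial$ is inert) reduces to $\dim V=2$, where one verifies the identity on monomials. This is the first point where the structure of $W$ is used essentially.

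\emph{Reductions for (2).} Since $V$ is reflection faithful and $\operatorname{char}k\neq 2$, each $t\in\mathcal T$ acts on $V$ as a genuine reflection, so $W$ acts as a finite reflection group; as $|W|$ is invertible in $k$, the Chevalley--Shephard--Todd theorem gives that $R^W$ is polynomial and $R$ is free over $R^W$ of rank $|W|=2m$. Then comes degree bookkeeping: there are $|\mathcal T|=m$ reflections and each $x_t$ lies in $R_2$, so $d$ is homogeneous of degree $2m$ and $\partial_w(d)$ is homogeneous of degree $2\bigl(m-\ell(w)\bigr)$; as $w$ runs over $W$ the lengths run through $0;\,1,1;\,\dots;\,m-1,m-1;\,m$, so the multiset of degrees of the $\partial_w(d)$ equals the multiset of degrees of a homogeneous $R^W$-basis of $R$ (both encoded by $(1+t)(1+t+\cdots+t^{m-1})$ in geometric degree). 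Hence it suffices to show the $2m$ elements $\partial_w(d)$ are $R^W$-linearly independent; matching Hilbert series then forces them to span $R$, so they form a basis.

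\emph{Linear independence.} The engine is the length lemma $\partial_v\partial_w=\partial_{vw}$ when $\ell(vw)=\ell(v)+\ell(w)$, and $\partial_v\partial_w=0$ otherwise; the first case is immediate from (1) (concatenate reduced words), and in the dihedral case the second is easy too, since concatenating the alternating words of $v$ and $w$ yields, after a braid move when $\ell(v)+\ell(w)>m$, a word with two equal adjacent letters, which $\partial_t^2=0$ (together with braid-invariance) kills. Given a nontrivial homogeneous relation $\sum_{w\in W}c_w\,\partial_w(d)=0$ with $c_w\in R^W$, let $w_\ast$ be of minimal length with $c_{w_\ast}\neq 0$ and put $v=w_0 w_\ast^{-1}$, so that $\ell(v)=m-\ell(w_\ast)$ and $vw_\ast=w_0$ with lengths adding. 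Applying the $R^W$-linear operator $\partial_v$ and invoking the length lemma, every term with $w\neq w_\ast$ dies (if $\ell(w)>\ell(w_\ast)$ the lengths cannot add, since $\ell(v)+\ell(w)>m$; if $\ell(w)=\ell(w_\ast)$ and $w\neq w_\ast$ they add only if $w=v^{-1}w_0=w_\ast$), leaving $c_{w_\ast}\,\partial_{w_0}(d)=0$.

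\emph{Conclusion and main obstacle.} It remains to see that $\partial_{w_0}(d)$ is a nonzero scalar; granting this, $c_{w_\ast}=0$, a contradiction, so the relation is trivial and (2) follows. Now $d=\prod_{t\in\mathcal T}x_t$ is, up to a nonzero scalar, the lowest-degree $W$-anti-invariant (every anti-invariant vanishes on each mirror $V^t$, hence is divisible by $d$, and $d$ itself is anti-invariant), so the BGG-type formula
\[\partial_{w_0}(f)=\frac{1}{2^{m}}\cdot\frac{\sum_{w\in W}(-1)^{\ell(w)}\,w(f)}{\prod_{t\in\mathcal T}x_t}\]
evaluated at $f=d$ gives $\partial_{w_0}(d)=|W|/2^{|\mathcal T|}=2m/2^{m}$, which is nonzero in $k$ because $\operatorname{char}k\neq 2$ and $|W|$ is invertible (a short induction, or a direct low-rank check, gives the same value). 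The two non-formal ingredients are thus the braid relations of (1) and this nonvanishing --- the latter being precisely where the hypothesis that $|W|$ is invertible is consumed; the rest (degree bookkeeping, the length lemma, graded Nakayama / Hilbert-series matching) is formal. I expect the braid relations to be the more delicate of the two.
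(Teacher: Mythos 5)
The paper does not prove Theorem \ref{erudito}: it is quoted from Demazure \cite[Thm.\ 2]{De}, so there is no internal proof to compare yours with. Your plan is essentially the standard argument for Demazure's theorem specialized to the dihedral case, and its architecture is sound: reduction of (1) to the single braid identity for $w_0$, the length lemma $\partial_v\partial_w=\partial_{vw}$ or $0$, freeness of $R$ over $R^W$ together with the matching of Hilbert series, and linear independence via triangularity under $\partial_{w_0w_*^{-1}}$.

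Two points need attention before this counts as a proof rather than a plan. First, the braid identity is the entire content of part (1), and you only assert it (``one checks that the two coefficient families agree''); the coefficient comparison in $\operatorname{Frac}(R)\rtimes W$ does work, but it is the one genuine computation here: the coefficient of $w$ in $\partial_{s_1}\cdots\partial_{s_m}$ is a signed sum over subwords of products of the linear forms $\tfrac{1}{2}\bigl((s_1\cdots s_{i-1})(x_{s_i})\bigr)^{-1}$, and the agreement of these sums for the two reduced words of $w_0$ must actually be verified. Second, the exact value $\partial_{w_0}(d)=2m/2^{m}$ cannot be correct as stated: for $t\notin\mathcal{S}$ the form $x_t$ is only defined up to a nonzero scalar, and rescaling it rescales $d$ and hence $\partial_{w_0}(d)$, while $\partial_{w_0}$ (built only from $x_s$ and $x_r$) is unchanged --- the remark following Proposition \ref{calculo} exploits exactly this freedom. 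What your argument needs, and what is true, is only that $\partial_{w_0}(d)$ is a \emph{nonzero} scalar; your BGG-type expansion yields $\partial_{w_0}(d)=\lambda\,|W|/2^{|\mathcal{T}|}$ with $\lambda\in k^{\times}$ a product of the scalars comparing each $(s_1\cdots s_{i-1})(x_{s_i})$ with the chosen $x_{t_i}$ (and the anti-invariance $w(d)=(-1)^{\ell(w)}d$ also deserves a one-line check of scalars for a general reflection faithful $V$). Invertibility of $|W|$ then finishes the argument. With those two repairs the proposal is complete; note also that the paper itself deduces $\partial_{w_0}(d)\neq 0$ \emph{from} Theorem \ref{erudito} in the proof of Proposition \ref{calculo}, whereas you need it as an input, so you must prove it directly as above --- there is no circularity, but the burden is on you.
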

So we can define the graded $R^W$\label{H2}-module morphism  
\begin{displaymath}
\begin{array}{lll}
\hat{t} : \ \ \ \ \ \ \ \ \ R &\rightarrow &  R^W(-2m) \\
\sum_{w\in W}\lambda_w\partial_w(d)&\mapsto & \lambda_1
\end{array}
\end{displaymath} The following lemma is classical :
  \begin{lemma}
$R$ is a symmetric algebra over $R^W$ and $\hat{t}$ is the symmetrising form. 
\end{lemma}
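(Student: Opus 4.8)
The plan is to prove the two assertions separately: first that $R$ is a symmetric algebra over $R^W$, and second that $\hat{t}$ is a symmetrising form for this structure. Recall that a commutative algebra $A$ over a subring $A_0$ is symmetric if $A$ is finite free as an $A_0$-module and there is an $A_0$-linear map $\lambda : A \to A_0$ such that the induced bilinear form $(a,b) \mapsto \lambda(ab)$ is nondegenerate (equivalently, the map $A \to \mathrm{Hom}_{A_0}(A,A_0)$, $a \mapsto \lambda(a\cdot -)$, is an isomorphism). The first part is immediate from Theorem \ref{erudito}(2): the set $\{\partial_w(d)\}_{w\in W}$ is an $R^W$-basis of $R$, so $R$ is finite free over $R^W$ of rank $|W|$. (One should also note $R^W$ is graded so this is compatible with gradings, with the appropriate shift $(-2m)$ coming from $\deg d = 2|\mathcal{T}| = 2m$ in the dihedral case.)

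For the second part, I would exhibit explicitly a dual basis to $\{\partial_w(d)\}_{w\in W}$ under the form $(a,b) \mapsto \hat{t}(ab)$. The natural candidate is a suitably normalised family of $\partial$-operators applied to $1$, i.e. something like $\{c_w\}_{w\in W}$ where $c_w$ is a scalar multiple of $\partial_{w_0 w^{-1}}(1)$ or of $\partial$ applied across the "complementary" part of $\mathcal{T}$; concretely, in the dihedral case the standard fact (a BGG/Demazure-type identity) is that $\hat{t}(\partial_w(d) \cdot \partial_{w'}(1)) = \delta_{w, w'_{\text{something}}}$ up to a unit, reflecting the pairing between the two "halves" of a factorisation $d = \partial_{?}(d) \cdot (\text{rest})$. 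The cleanest route is: (i) show $\hat{t}$ is $R^W$-linear and lands in $R^W(-2m)$ — this is built into its definition via Theorem \ref{erudito}; (ii) compute $\hat{t}$ on products of basis elements using the divided-difference calculus, in particular the Leibniz rule $\partial_s(ab) = \partial_s(a)\,b + (s\cdot a)\,\partial_s(b)$ and the relation $\partial_s^2 = 0$, together with the top-degree normalisation $\partial_{w_0}(d) \in k^\times$ (since $d$ has degree $2\ell(w_0)$ in the finite dihedral case and $\partial_{w_0}$ lowers degree by exactly that amount); (iii) conclude that the Gram matrix of the form in the basis $\{\partial_w(d)\}$, paired against an explicit second family, is (anti-)triangular with units on the anti-diagonal, hence invertible.

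The main obstacle I expect is part (iii): verifying nondegeneracy of the form concretely, i.e. finding the right dual family and checking the pairing. The subtlety is bookkeeping with the grading shifts and with the scalars $2$ appearing in $\partial_s(a) = (a - s\cdot a)/2x_s$, plus handling the characteristic hypothesis ($|W|$ invertible, char $\neq 2$) so that all denominators make sense and no pairing entry vanishes accidentally. A slicker alternative that avoids the explicit dual basis: use the general principle that a finite free extension $A/A_0$ with $A$ (graded-)connected and $A_0 \to A$ flat such that $A \otimes_{A_0} (A_0/A_{0,+}) $ is a Poincaré-duality algebra (here the coinvariant algebra $R/R^W_+ R$ of the finite dihedral group, which is a Poincaré duality algebra with socle in degree $2\ell(w_0)$) is automatically symmetric, with symmetrising form any $A_0$-lift of the projection to the socle; one then identifies $\hat t$ with such a lift by checking both agree modulo $R^W_+$, where $\hat t$ becomes the coefficient of $\partial_{w_0}(\bar d) = $ (the socle generator) — this reduces everything to the well-known fact that the coinvariant algebra of a finite Coxeter group is Frobenius. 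I would present the second, structural argument as the main proof and relegate the divided-difference computation to a remark, since it is the standard reference point (e.g. Demazure, \cite{De}).
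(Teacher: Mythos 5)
Your concrete argument (steps (i)--(iii)) is essentially the paper's own proof: the paper's key ``easily verifiable'' fact is precisely your anti-triangularity claim, namely $\hat t(\partial_x(d)\partial_y(d))\in\delta_{xw_0,y}+R_+$, so the Gram matrix of the basis $\{\partial_w(d)\}$ \emph{with itself} is the anti-diagonal permutation matrix modulo $R_+$; the only cosmetic difference is that the paper deduces injectivity of $\psi:a\mapsto\hat t(a\cdot -)$ from this and then gets surjectivity from the separate graded-rank identity $\mathrm{Hom}_{R^W}(R,R^W)\cong R(2m)$ (via $R\cong\bigoplus_w R^W(-2l(w))$), whereas you would get both at once from invertibility of the Gram matrix. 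Two slips to correct if you write this up: since $\partial_s(1)=0$, the family $\partial_{w_0w^{-1}}(1)$ vanishes for $w\neq w_0$ and cannot serve as a dual family --- the pairing to compute is that of the basis against itself, as above; and in the degree bookkeeping the socle generator of the coinvariant algebra is $\bar d=\partial_e(d)$ (of top degree $2m=2l(w_0)$), while $\partial_{w_0}(d)$ is the degree-zero scalar, so $\hat t$ modulo $R^W_+$ extracts the coefficient of $d$, not of $\partial_{w_0}(d)$. Your ``structural'' alternative (the coinvariant algebra of a finite Coxeter group with $|W|$ invertible is a Poincar\'e duality algebra, plus a graded Nakayama lift) is a legitimate different route, but as written it is an appeal to a general principle rather than a proof, and identifying $\hat t$ with a lift of the socle projection still reduces to the same top-degree nondegeneracy computation, so it does not actually save the divided-difference work.
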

\begin{proof}
 By definition  $\hat{t}$ is a linear form and as $R$ is commutative, it is trivial to see that $\hat{t}$ is a trace. We only need to prove that the map  $\psi : R(2m)\rightarrow \mathrm{Hom}_{R^W}(R, R^W)$ that sends $a$ to $(b\rightarrow \hat{t}(ab))$ is a graded isomorphism of  $R$-modules. The fact that $\psi$ is a morphism of $R$-modules is a consequence of the fact that  $R$ is commutative. 

To see that  $\psi$ is injective, we use the easily verifiable fact that $\hat{t}(\partial_x(d)\partial_y(d))\in \delta_{xw_0,y}+R_+$.  Finally, using the following isomorphism (see \cite[ch. IV, cor. 1.11 a.]{Hi})  :
\begin{equation}\label{nico}R \cong \bigoplus_{w\in W} R^{W}(-2l(w))  \textrm{ as\ left\ graded } 
 R^{W}-\mathrm{mod}  \end{equation}
 we conclude easily that there exists an isomorphism of graded $R^W$-modules : $\mathrm{Hom}_{R^W}(R, R^W)\cong R(2m)$, and so  $\psi$ is an isomorphism.
  \end{proof}

Let us consider the dual basis $\{\partial_w(d)^{\ast}\}_{w\in W}$\label{H3} with respect to the linear form $\hat{t}$ :
$$\hat{t}(\partial_w(d)\partial_{w'}(d)^{\ast})=\delta_{w,w'}. $$
The objects of this dual basis are homogeneous.   By \cite[lemma 3.2]{Br}, we have that $\sum_{w\in W}\partial_w(d)\otimes \partial_{w}(d)^{\ast}$ is the Casimir element and it is clear that  deg($\partial_w(d)+\partial_{w'}(d)^{\ast}$)=$2m,$ so, by \cite[proposition 3.3]{Br}, the map
\begin{displaymath}\label{H4}
\begin{array}{lll}
 \xi :  R &\rightarrow &  R\otimes_{R^W}R (-2m) \\
   \hspace{0.5cm}1 &\mapsto & \sum_{w\in W}\partial_w(d)\otimes \partial_{w}(d)^{\ast}
\end{array}
\end{displaymath}is a non-zero morphism of graded
 $(R,R)$-bimodules.

 Let us consider the decomposition $R=R^s \oplus x_s R^s$. Let $p_1, p_2\in R^s$ and $p,q,r\in R$. We define the morphisms of graded $R^s$-modules :
\begin{displaymath}\label{la1}
\begin{array}{lll}
 P_s : R \rightarrow R,&&  p_1+x_sp_2 \mapsto p_1  \\ \smallskip
 I_s : R \rightarrow R,&&  p_1+x_sp_2 \mapsto x_sp_2\\ \smallskip
 \partial_s : R(2) \rightarrow R,&& p_1+x_sp_2 \mapsto p_2\\ \smallskip
\end{array}\end{displaymath} and the morphisms of graded $(R,R)-$bimodules  :
\begin{displaymath}
\begin{array}{lll}\label{la222}
m_s : \theta_s\rightarrow R, && R\otimes_{R^s}R\ni p\otimes q \mapsto pq\\ \smallskip
 j_s : \theta_s\theta_s (2)\rightarrow \theta_s, && R\otimes_{R^s}R\otimes_{R^s}R \ni p\otimes q\otimes r \mapsto p\partial_s(q)\otimes r \in R\otimes_{R^s}R.
\end{array}\end{displaymath}
We can now define two morphisms of graded  $(R,R)$-bimodules (see notation \ref{nadal}) : 
\begin{displaymath}\label{H45}
\begin{array}{llcl}
 \iota :   R\otimes_{R^W}R (-2m) &\rightarrow & R\otimes_{R^t}R\otimes \cdots \otimes_{R^s}R \otimes_{R^W}R (-2m) &\simeq X_{tu} \otimes_{R^W}R (-2m)\\
  \hspace{1.5cm} a\otimes b &\mapsto & a\otimes 1\otimes 1\otimes \cdots \otimes 1\otimes b &
\end{array}\end{displaymath}
and $\vartheta :  X_{sr}\otimes_R X_{tu} \rightarrow   R(-2m)$ defined by
 $$ \vartheta = (m_s\circ j_s)\circ(\mathrm{id}^{1}\otimes (m_r\circ j_r)\otimes \mathrm{id}^{1})\circ(\mathrm{id}^{2}\otimes (m_s\circ j_s)\otimes \mathrm{id}^{2})\circ \cdots \circ (\mathrm{id}^{m-1}\otimes (m_t\circ j_t)\otimes \mathrm{id}^{m-1}) $$

%
%


We introduce the morphism 
$\Phi \in \mathrm{Hom}(X_{sr},R\otimes_{R^W}R )$, defined by $$\Phi =  (\vartheta\otimes \mathrm{id}_{R \otimes_{R^W}R})\circ (\mathrm{id}_{X_{sr}}\otimes (\iota\circ \xi)),$$ where we identify the domain $X_{sr}$ with $X_{sr}\otimes_R R.$ 

Finally we can define the following graded morphism 
\begin{displaymath}
\begin{array}{lll}
 \Psi : R\otimes_{R^W}R  &\rightarrow & \hspace{1.5cm}X_{rs} \\
  \hspace{0.9cm} a\otimes b &\mapsto & a\otimes 1\otimes 1\otimes \cdots \otimes 1\otimes b
\end{array}
\end{displaymath}

  \begin{proposition}\label{calculo}
The morphism $\Psi \circ \Phi \in \mathrm{Hom}(X_{sr},X_{rs})$ is a non-zero scalar multiple of  $f_{sr}.$ If we identify  $X_{sr}=R\otimes_{R^s}R\otimes _{R^r}R\otimes_{R^s}R \cdots \otimes_{R^t}R$ and $X_{rs}=R\otimes_{R^r}R\otimes _{R^s}R\otimes_{R^r}R \cdots \otimes_{R^u}R$ we  explicitly have :
$$ \Psi \circ \Phi(p_0\otimes p_1\otimes\cdots \otimes p_n)= \sum_{w\in W} p_0\partial_s(p_1\partial_r(p_2\partial_s(p_3\cdots \partial_t(p_n\partial_w(d)))\cdots )\otimes 1\otimes 1\otimes \cdots \otimes 1\otimes \partial_w(d)^*
$$


\end{proposition}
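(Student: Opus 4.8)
The plan is to prove the two assertions in tandem: since $DZ_{sr}$ is one-dimensional (Proposition \ref{literato}), once we know $\Psi\circ\Phi$ is a non-zero degree-zero bimodule morphism from $X_{sr}$ to $X_{rs}$, it must be a scalar multiple of $f_{sr}$, and the explicit formula will simultaneously furnish the normalization and certify non-vanishing. So the first step is bookkeeping on degrees: I would track the grading shifts through $\xi$ (shift $-2m$), $\iota$ (which preserves degree once the target is identified with $X_{tu}\otimes_{R^W}R(-2m)$), the map $\vartheta$ (each $m_t\circ j_t$ contributes a shift of $+2$, and there are $m$ of them, so $\vartheta$ shifts by $+2m$), and $\Psi$, $\Phi$ being assembled so that the shifts telescope to zero; hence $\Psi\circ\Phi$ lands in degree zero, so it lies in $DZ_{sr}$.

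Next I would unwind the composite explicitly on a general element $p_0\otimes\cdots\otimes p_n$. The key computation is the interaction of $\vartheta$ with $\iota\circ\xi$. Applying $\mathrm{id}_{X_{sr}}\otimes(\iota\circ\xi)$ to $p_0\otimes\cdots\otimes p_n$ (identifying $X_{sr}=X_{sr}\otimes_R R$) produces $\sum_{w\in W} p_0\otimes\cdots\otimes p_n\otimes 1\otimes\cdots\otimes 1\otimes\partial_w(d)^*$ with the middle $1$'s being the freshly inserted tensor factors coming from $\iota$, and the $\partial_w(d)$ from $\xi$ absorbed into the last slot $p_n$ of the $X_{sr}$-part via the tensor identification over $R$. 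Then $\vartheta\otimes\mathrm{id}$ acts by the nested sequence of $m_t\circ j_t$'s: each $j_t$ applies $\partial_t$ to its middle argument (the slot $q$ in the definition $p\otimes q\otimes r\mapsto p\partial_t(q)\otimes r$) and each $m_t$ multiplies through. Reading the composition from the innermost $(\mathrm{id}^{m-1}\otimes(m_t\circ j_t)\otimes\mathrm{id}^{m-1})$ outward, one checks by induction on the number of tensor slots that $\vartheta$ collapses $p_0\otimes p_1\otimes\cdots\otimes p_n\otimes(\text{tail})$ to $p_0\,\partial_s\bigl(p_1\,\partial_r\bigl(p_2\,\partial_s(p_3\cdots\partial_t(p_n\cdot(\text{tail})))\cdots\bigr)\bigr)$; with $(\text{tail})=\partial_w(d)$ supplied by $\xi$, this yields exactly the claimed iterated-Demazure expression. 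Finally $\Psi$ re-installs the $X_{rs}$ tensor structure, placing the scalar $p_0\partial_s(\cdots\partial_t(p_n\partial_w(d))\cdots)$ in the first slot and $\partial_w(d)^*$ in the last, with $1$'s in between, giving the displayed formula.

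For non-vanishing and the identification with $f_{sr}$, I would evaluate the formula at $1\otimes x_s\otimes x_r\otimes x_s\otimes\cdots\otimes x_t$ and check that the coefficient of $1\otimes x_r\otimes x_s\otimes\cdots\otimes x_u$ (modulo $R_+X_{rs}$) is a non-zero scalar. Here the relevant input is $\partial_s(x_s\partial_r(x_r\cdots\partial_t(x_t\partial_w(d))\cdots))$ summed over $w$; using $\partial_t(x_t a)\equiv a+x_t\partial_t(a)$-type identities (or more simply $\partial_t(x_t)=1$ together with the Leibniz rule $\partial_t(ab)=\partial_t(a)b+(t\cdot a)\partial_t(b)$), the nested expression telescopes, and by Theorem \ref{erudito}(2) and the definition of $\hat t$ the only surviving contribution modulo $R_+$ comes from the term that extracts the coefficient of $\partial_1(d)=d$, giving a non-zero constant times $1\otimes 1\otimes\cdots\otimes 1\otimes d^*$; one then matches this against the defining normalization of $f_{sr}$. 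I expect the main obstacle to be precisely this last verification: keeping track of which $w$-summand contributes the leading (degree-zero, i.e. $R_+$-free) term, and disentangling the $s$-versus-$t$ bookkeeping in Notation \ref{nadal} so that the final tensor slot genuinely lands in $R^u$ as required — the rest is a mechanical, if lengthy, induction on the length of the braid word.
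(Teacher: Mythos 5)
Your skeleton --- one-dimensionality of $DZ_{sr}$ from Proposition \ref{literato}, a degree count showing $\Psi\circ\Phi\in DZ_{sr}$, an explicit unwinding of the composite to obtain the displayed formula, and then a non-vanishing check --- is exactly the paper's, and your unwinding of $(\vartheta\otimes\mathrm{id})\circ(\mathrm{id}\otimes(\iota\circ\xi))$ is correct as described. Where you diverge is the non-vanishing step, and this is also the one place your proposal is not actually finished: you propose to evaluate at $1\otimes x_s\otimes x_r\otimes\cdots\otimes x_t$ and extract the leading term modulo $R_+X_{rs}$ by Leibniz expansions of the nested Demazure operators, and you yourself flag this as the main obstacle. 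That computation is entirely avoidable. Evaluate instead at $1\otimes 1\otimes\cdots\otimes 1$: the formula gives $\sum_{w\in W}\partial_s\partial_r\cdots\partial_t(\partial_w(d))\otimes 1\otimes\cdots\otimes 1\otimes\partial_w(d)^*=\sum_{w}\partial_{w_0}\partial_w(d)\otimes\cdots$, and since $\partial_{w_0}\partial_w=0$ for $w\neq e$ (Theorem \ref{erudito}(1) together with $\partial_s^2=0$), only the $w=e$ term survives, yielding $\partial_{w_0}(d)\otimes 1\otimes\cdots\otimes 1\otimes d^*$ with $d^*$ a nonzero element of $k$. As $\partial_{w_0}(d)$ belongs to the $R^W$-basis of Theorem \ref{erudito}(2), this is nonzero; and since the proposition only asserts that $\Psi\circ\Phi$ is a scalar multiple of $f_{sr}$, there is no need to match the normalization at $1\otimes x_s\otimes\cdots\otimes x_t$ at all. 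With that substitution your argument closes; as written, the decisive step is only sketched and is considerably harder than it needs to be.
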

\begin{proof}
It is clear that $\Psi \circ \Phi $ is a degree zero morphism, then by definition of $f_{sr}$ we only need to prove that 
$\Psi \circ \Phi \neq 0. $ But for this we only need to note that  $\Psi \circ \Phi(1\otimes 1\otimes \cdots \otimes 1)= \partial_{w_0}(d)\otimes 1\otimes \cdots \otimes 1,$ where $w_0$ is the longest element of $W$. As $\partial_{w_0}(d)$ is part of the basis in theorem \ref{erudito}, it is non zero.  \end{proof}

\begin{remark}
If $\Psi \circ \Phi$ is a scalar multiple of $ f_{sr}$, we can chose some $t_0\in \mathcal{T}$ with $t_0\notin \mathcal{S}$ and change the definition of $x_{t_0}$ (by a scalar multiple) so as to change by a scalar multiple the definition of $d=\prod_{t\in\mathcal{T}}x_t$ and thus to have exactly $\Psi \circ \Phi =f_{sr}.$
\end{remark}

\subsection{An important property of $f_{sr}$}\label{ole}
In this section we prove a property of 
$f_{sr}$ that will be  useful in  section \ref{dromo}.
We start with a trivial corollary of proposition  \ref{calculo} :
  \begin{corollary}\label{fsr}
 $f_{sr}(1\otimes_{R^s}\theta_r\theta_s\cdots \theta_t)\subseteq R^s\otimes_{R^r}1\otimes_{R^s}1\otimes \cdots 1\otimes_{R^u}R$
\end{corollary}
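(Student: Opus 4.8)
The plan is to obtain the statement by reading it off directly from the explicit formula for $f_{sr}$ established in Proposition \ref{calculo}. Since $\Psi\circ\Phi$ is a nonzero scalar multiple of $f_{sr}$ and the asserted containment is unaffected by rescaling (alternatively, one may invoke the Remark following Proposition \ref{calculo} to arrange $f_{sr}=\Psi\circ\Phi$ on the nose), it suffices to prove the inclusion for $\Psi\circ\Phi$.

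First I would specialize the formula of Proposition \ref{calculo} to the subspace $1\otimes_{R^s}\theta_r\theta_s\cdots\theta_t\subseteq X_{sr}$, i.e. set $p_0=1$. The formula then becomes
$$\Psi\circ\Phi(1\otimes p_1\otimes\cdots\otimes p_n)=\sum_{w\in W}\partial_s\bigl(p_1\partial_r(p_2\partial_s(p_3\cdots\partial_t(p_n\partial_w(d)))\cdots)\bigr)\otimes 1\otimes 1\otimes\cdots\otimes 1\otimes\partial_w(d)^{\ast}.$$
Now two elementary observations finish the argument. On the right-hand side every interior tensor factor is literally $1$, and the final factor $\partial_w(d)^{\ast}$ lies in $R$, which is all that the target subspace requires there. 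The first tensor factor has the form $\partial_s(a)$ for some $a\in R$; but from the definition of $\partial_s$ through the direct sum decomposition $R=R^s\oplus x_sR^s$ (equivalently, $\partial_s(a)=(a-s\cdot a)/2x_s$) one sees at once that the image of $\partial_s$ is precisely $R^s$. Hence the first factor lies in $R^s$, and therefore the whole expression lies in $R^s\otimes_{R^r}1\otimes_{R^s}1\otimes\cdots 1\otimes_{R^u}R$, which is the claim.

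There is no genuine obstacle here — this is indeed a trivial corollary of Proposition \ref{calculo}, the single point to verify being that the image of $\partial_s$ equals $R^s$, which is immediate from its definition. One should merely note in passing that the containment is independent of the chosen linear forms $x_t$ (hence of the element $d$), so that no normalization is needed to transfer the conclusion from $\Psi\circ\Phi$ to $f_{sr}$.
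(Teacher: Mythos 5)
Your proof is correct and follows exactly the route the paper intends: the paper offers no written argument, simply calling the statement a trivial corollary of Proposition \ref{calculo}, and your reading-off of the explicit formula (first factor in $\mathrm{im}\,\partial_s=R^s$, interior factors equal to $1$, last factor in $R$, with the scalar ambiguity between $f_{sr}$ and $\Psi\circ\Phi$ harmless because the target subspace is stable under scaling) is precisely that verification.
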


We need some definitions in order to state the next proposition.
  \begin{definition}\label{iriarte}
We define the morphism $$^i\hspace{-0.09cm}f=\mathrm{id}^i\otimes f_{sr}\otimes \mathrm{id}^{n-i-m(s,r)}\in\mathrm{Hom}(\theta_{s_1}\cdots \theta_{s_n},\theta_{t_1}\cdots \theta_{t_n}),$$\label{lab6} whenever this makes sense.
A morphism $g$  between $\theta_{s_1}\cdots \theta_{s_n}$ and $\theta_{t_1}\cdots \theta_{t_n}$ is of $f$-type \label{lab7} if there exists a sequence $\bar{i}=(i_1,\ldots, i_k)$ such that $$g=^{i_k}\hspace{-0.17cm}f\circ \cdots\circ ^{i_2}\hspace{-0.17cm}f\circ ^{i_1}\hspace{-0.17cm}f.$$
\end{definition}
\vspace{0.2cm} 
\begin{definition}\label{K165}
Let $g, g_0, g_1,\ldots, g_m$ be  morphisms in Soergel's category $\mathbf{B}$. We say that the tuple $(g_m,  \ldots, g_1, g_0)$ is an expression of $g$ if $g=g_m\circ \cdots \circ g_1\circ g_0$. 
 \end{definition}
\vspace{0.2cm} 
 \begin{definition}\label{lab8}
 Let $\bar{s}=(s_1,\ldots,s_n)\in \mathcal{S}^n$. We define $\theta_{\bar{s}}=\theta_{s_1}\cdots\theta_{s_n}.$
\end{definition}

  \begin{definition}
Let $(i_k,\ldots,i_2,i_1)\in\mathbb{N}^k$. We define $$\mathrm{Null}(i_k,\ldots,i_1)=\{j\ \vert\ 1\leq j\leq k\ \mathrm{and}\ i_j=0 \}$$
\end{definition}
  \begin{proposition}\label{dimmensdale}
 Let us fix an integer $p\geq 2$. Let us consider $\bar{t}=(t_1,\ldots, t_{p-1})$ and $\bar{a}=(a_1,\ldots, a_{p-1})$, two reduced expressions of  $x\in W$. Let  $g\in \mathrm{Hom}(\theta_s\theta_{\bar{t}},\theta_s\theta_{\bar{a}} )$ be an $f$-type morphism. We have the following inclusion $g(1\otimes_{R^s}\theta_{\bar{t}})\subseteq 1\otimes_{R^s}\theta_{\bar{a}}$. 
\end{proposition}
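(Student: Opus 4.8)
The plan is to reduce the statement to a single application of the basic inclusion property already recorded in Corollary \ref{fsr}, using an induction on the length of the $f$-type factorization of $g$ together with the geometry of the graph $\mathcal{R}(x)$. Write $g = {}^{i_k}\hspace{-0.17cm}f \circ \cdots \circ {}^{i_1}\hspace{-0.17cm}f$ as an $f$-type morphism, where each ${}^{i_j}\hspace{-0.17cm}f = \mathrm{id}^{i_j}\otimes f_{s_j r_j}\otimes \mathrm{id}^{n-i_j-m(s_j,r_j)}$ applies a braid morphism $f_{s_jr_j}$ to a window of consecutive tensor factors of the bimodule appearing at the $j$-th stage. Since source and target of $g$ both begin with the same $\theta_s$, and all intermediate bimodules $\theta_s\theta_{\bar{b}}$ correspond (after deleting the leading $s$) to reduced expressions $\bar{b}$ of $x$, every intermediate stage has the form $\theta_s\theta_{\bar{b}}$ with $\bar{b}$ reduced for $x$. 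The key dichotomy is whether a given ${}^{i_j}\hspace{-0.17cm}f$ \emph{touches the leading factor} $\theta_s$ or not.

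First I would treat the case where every braid move in the factorization acts \emph{strictly to the right} of the leading $\theta_s$, i.e. $i_j \geq 1$ for all $j$. Then each ${}^{i_j}\hspace{-0.17cm}f$ is literally $\mathrm{id}_{\theta_s}\otimes({}^{i_j-1}\hspace{-0.17cm}f)$ acting on $\theta_s\otimes_R\theta_{\bar{b}}$, so it sends $1\otimes_{R^s}\theta_{\bar{b}}$ into $1\otimes_{R^s}\theta_{\bar{b}'}$ trivially: the leading $1$ is untouched. Composing, $g(1\otimes_{R^s}\theta_{\bar{t}})\subseteq 1\otimes_{R^s}\theta_{\bar{a}}$. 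The substantive case is when some ${}^{i_j}\hspace{-0.17cm}f$ has $i_j = 0$, i.e. it is a braid move $f_{s r'}$ (for some $r'\in\mathcal{S}$) applied to the first $m(s,r')$ factors, which now include the leading $\theta_s$ (this forces $s_j = s$, the first letter of the window). At such a step I would invoke Corollary \ref{fsr}: $f_{sr'}(1\otimes_{R^s}\theta_{r'}\theta_s\cdots) \subseteq R^s\otimes_{R^{r'}}1\otimes_{R^s}1\otimes\cdots\otimes_{R^u}R$. The crucial point is that after this move the leading factor is still of the form $(\text{element of }R^s)\otimes 1\otimes\cdots$, but now sitting in $\theta_{r'}\theta_s\cdots$; more precisely, it lies in $R^s\otimes_{R^{r'}}(1\otimes_{R^s}\theta_{\text{rest}})$.

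So I would strengthen the induction hypothesis: instead of tracking only "$1\otimes_{R^s}\cdots$", I would track the subspace $R^s\otimes_{R^{s_1(b)}} 1\otimes_{R^{s_2(b)}}1\otimes\cdots$ of $\theta_{\bar{b}}$ obtained by putting an arbitrary element of $R^s$ in the first slot and $1$ everywhere after, where $s_1(b)$ is the first letter of the current expression $\bar{b}$; call this subspace $L_{\bar{b}}$. One checks, move by move, that every ${}^{i_j}\hspace{-0.17cm}f$ maps $L_{\bar{b}}$ into $L_{\bar{b}'}$: if $i_j\geq 1$ this is because the leading factor is untouched and $R^s$ is carried along; if $i_j = 0$ this is exactly Corollary \ref{fsr} (note that $f_{sr'}$ sends an element $a\otimes 1\otimes\cdots$ with $a\in R^s$ to something in $R^s\otimes_{R^{r'}}1\otimes\cdots$, since $a\in R^s\subseteq R$ factors through; the morphism being $R$-bilinear, $a\cdot f_{sr'}(1\otimes\cdots) \in a R^s\otimes\cdots\subseteq R^s\otimes\cdots$). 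By induction $g(L_{\bar{t}})\subseteq L_{\bar{a}}$; since $1\otimes_{R^s}\theta_{\bar{t}}$ is exactly the $R$-span on the right of $L_{\bar{t}}$ — i.e. $1\otimes_{R^s}\theta_{\bar{t}} = L_{\bar{t}}\cdot R$ (acting on the rightmost factor) — and all the ${}^{i_j}\hspace{-0.17cm}f$ are morphisms of $(R,R)$-bimodules, hence commute with right multiplication, we get $g(1\otimes_{R^s}\theta_{\bar{t}}) = g(L_{\bar{t}})\cdot R\subseteq L_{\bar{a}}\cdot R = 1\otimes_{R^s}\theta_{\bar{a}}$, as desired.

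The main obstacle I anticipate is the bookkeeping at a step with $i_j = 0$: one must be careful that the window $f_{s r'}$ acts on is genuinely the leftmost $m(s,r')$ factors (so that the convention on $X_{sr}$ versus $X_{rs}$ in Corollary \ref{fsr} applies cleanly, with the output living in $R^{s}\otimes_{R^{r'}}1\otimes\cdots$), and that after the move the first letter of the new expression has become $r'$, so $L_{\bar{b}'}$ is indeed defined with $R^s$ (not $R^{r'}$) in the first slot — which is consistent because $R^s\supseteq$ the image described by Corollary \ref{fsr} only after we also remember that the output's first tensor-leg carries an $R^s$-coefficient, not an $R^{r'}$-coefficient. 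Verifying this compatibility, and that no braid move ever acts on a window that "straddles" in a way incompatible with the leading-factor analysis (impossible here since a window is an interval of consecutive factors, so either it contains factor $1$ or it doesn't), is the only delicate point; everything else is formal from bimodule-morphism properties and the $R^s$-linearity built into Corollary \ref{fsr}.
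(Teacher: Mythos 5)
Your overall strategy --- track a ``first-slot'' invariant through the factorization, handling moves with $i_j\geq 1$ trivially and moves with $i_j=0$ via Corollary \ref{fsr} --- starts in the right place, but the single-pass invariant you propose is not actually preserved, and this is precisely the difficulty that forces the paper into a double induction. There are two problems. Under your literal definition $L_{\bar{b}}=R^s\otimes 1\otimes\cdots\otimes 1$, the reduction ``$1\otimes_{R^s}\theta_{\bar{t}}=L_{\bar{t}}\cdot R$'' is false: right multiplication only populates the last slot, whereas $1\otimes_{R^s}\theta_{\bar{t}}$ has arbitrary entries in every slot after the first (and non-touching moves destroy the all-ones pattern anyway, since $f_{s'r'}(1\otimes\cdots\otimes 1)$ produces $\partial_{w_0}(d)$ in the first slot of its window). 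Under the fattened reading $L_{\bar{b}}=R^s\otimes_{R^{s_1(\bar{b})}}\theta_{\mathrm{rest}}$, the invariant fails at the \emph{second} touching move: after the first move $f_{st_1}$ the expression begins with $t_1$ and slot $0$ carries a coefficient $a\in R^s$; the next touching move is $f_{t_1 s}$, and by the explicit formula of Proposition \ref{calculo} its output in slot $0$ is $a\cdot\partial_{t_1}(p_1\partial_s(p_2\cdots))\in a\cdot R^{t_1}$, which for arbitrary interior entries $p_1,\ldots,p_{m-1}$ lies neither in $R^s$ nor in $R^{t_1}$. Your parenthetical ``$a\cdot f(1\otimes\cdots)\in aR^s\otimes\cdots\subseteq R^s\otimes\cdots$'' silently identifies the ring $R^{s_{\mathrm{current}}}$ delivered by Corollary \ref{fsr} with the ring $R^s$ containing $a$; these agree only at every other touching move, since touching moves alternate the leading letter.

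What saves the statement, and what the paper actually does, is to exploit that $\mathrm{Null}(i_k,\ldots,i_1)$ has even cardinality and to treat touching moves in consecutive pairs. Between the two moves of a pair, reducedness (in the extra-large case) prevents any braid move from entering positions $2,\ldots,m-1$ of the leading window, and the portion acting to its right is controlled by induction on the length $p$; hence at the second move of the pair the interior slots of the window are still equal to $1$. The slot-$0$ output of $f_{t_1 s}$ is then $a\cdot\partial_{t_1}\partial_s\cdots(q\,\partial_w(d))=a\cdot\partial_{w_0}(\cdots)$ with $\mathrm{Im}(\partial_{w_0})\subseteq R^s\cap R^{t_1}$, so the coefficient lands back in $R^s$ and can be absorbed across $\otimes_{R^s}$. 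This pairing, together with the need to control the tail between paired moves, is why the paper runs a double induction on $p$ and on $\alpha=\tfrac{1}{2}\,\mathrm{card}\,\mathrm{Null}$ rather than a one-pass invariant; to repair your argument you would need to build the same pairing into it.
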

\begin{proof} Let    $g=^{i_k}\hspace{-0.17cm}f\circ \cdots\circ ^{i_2}\hspace{-0.05cm}f\circ ^{i_1}\hspace{-0.08cm}f$. If we consider the domain and the co-domain of $g$, we can conclude that the set $\mathrm{Null}(i_k,\ldots,i_1)$ has an even cardinality. 

Let $\mathrm{Null}(i_k,\ldots,i_1)=\{y_1,y_2,\ldots, y_{2\alpha}\}$ with $y_1<y_2<\ldots <y_{2\alpha}$ and $\alpha \geq 0$. We will prove the proposition by double induction in  $p$ and in  $\alpha.$ We will use the notation  $T(p_0,\alpha_0)$ if the proposition is true for $p=p_0$ and $\alpha=\alpha_0.$ So we have to prove 
\begin{enumerate}
 \item $T(i,0)$ for all $i\geq 2.$ 
\item  $(T(i,\mathbb{N}) \mathrm{\ for \ all\ }2\leq i<p)\Rightarrow (\mathrm{for \ all\ }\alpha\geq 0,\ T(p,\alpha)\Rightarrow T(p,\alpha+1)  ), $
\end{enumerate}
where $\mathbb{N}$ is the set of non-negative integers.

The first assertion is trivial. We will prove the second assertion. We suppose  $T(i,\mathbb{N}) \mathrm{\ for \ all\ }2\leq i<p$ and $T(p,\alpha)$, and we will prove  $T(p,\alpha+1)$. We define $w=\begin{cases}t_1\ \mathrm{if}\ m(s,t_1)\ \mathrm{is \ odd}\\ s\ \mathrm{if}\ m(s,t_1)\ \mathrm{is\ even}.\end{cases}$

We define the three morphisms  $H,F,G$ by the formula  $$H=^{i_{y_2}}\hspace{-0.17cm}f\circ \cdots\circ ^{i_2}\hspace{-0.17cm}f\circ ^{i_1}\hspace{-0.17cm}f=^0\hspace{-0.17cm}f\circ F\circ ^0\hspace{-0.17cm}f\circ G.$$
We define $\bar{t'},\ \bar{t''},\ \bar{a'},\ \bar{a''}$    in the following way :  

\begin{itemize}
\item $G\in \mathrm{Hom}(\theta_{s}\theta_{\bar{t}},\theta_{s}\theta_{\bar{t'}})$ 
\item  $F\in \mathrm{Hom}(\underbrace{\theta_{t_1}\theta_{s}\theta_{t_1}\cdots \theta_{w}}_{m(s,r)\ \mathrm{terms}}\theta_{\bar{t''}}, \underbrace{\theta_{t_1}\theta_{s}\theta_{t_1}\cdots \theta_{w}}_{m(s,r)\ \mathrm{terms}}\theta_{\bar{a'}})$ 
\item  $H\in \mathrm{Hom}(\theta_{s}\theta_{\bar{t}},\theta_{s}\theta_{\bar{a''}})$ 
\end{itemize}
It is clear that  $G(1\otimes_{R^s}\theta_{\bar{t}})\subseteq 1\otimes_{R^s}\theta_{\bar{t'}}$. By the corollary \ref{fsr}, $$^0\hspace{-0.05cm}f\circ G(1\otimes_{R^s}\theta_{\bar{t}})\subseteq R^s\otimes_{R^{t_1}}1\otimes_{R^s}1\otimes_{R^{t_1}} \cdots 1  \otimes_{R^w}\theta_{\bar{t''}}.$$

By induction hypothesis, $T(i,\mathbb{N})\ \mathrm{for \ all\ }2\leq i<p$, so in particular $T(p-m(s,t_1),\mathbb{N})$, and this implies $$ F\circ ^0\hspace{-0.17cm}f\circ G(1\otimes_{R^s}\theta_{\bar{t}})\subseteq R^s\otimes_{R^{t_1}}1\otimes_{R^s}1\otimes_{R^{t_1}}\cdots  1\otimes_{R^w}\theta_{\bar{a'}},$$ and finally $H(1\otimes_{R^s}\theta_{\bar{t}})\subseteq R^s\otimes_{R^s}\theta_{\bar{a''}} \subseteq 1\otimes_{R^s}\theta_{\bar{a''}}$. Again by induction hypothesis we know $T(p,\alpha)$, so if $H'=^{i_k}\hspace{-0.17cm}f\circ \cdots\circ ^{i_{y_2+2}}\hspace{-0.17cm}f\circ ^{i_{y_2+1}}\hspace{-0.17cm}f$, we have $H'(1\otimes_{R^s}\theta_{\bar{a''}})\subseteq 1\otimes_{R^s}\theta_{\bar{a}},$ so $$g(1\otimes_{R^s}\theta_{\bar{t}})=H'\circ H(1\otimes_{R^s}\theta_{\bar{t}})\subseteq 1\otimes_{R^s}\theta_{\bar{a}}.$$ \end{proof}

\section{The bimodule $E_w$}\label{dromo}

From now on we will assume that our Coxeter group $W$ is extra-large, \textit{i.e.} $m(s,r)>3$ for all $s,r\in\mathcal{S}$.
Until the end of this section we will fix  a reduced expression $\bar{s}=(s_1,\cdots, s_n)\in \mathcal{S}^n$ of an element  $x\in W$. In this section all morphisms will be assumed to be of $f$-type (see definition \ref{iriarte}). 
\subsection{Towards $E_w$}
We start with some definitions and notations.

\begin{notation}\label{orden} We say that an integer interval is a set of positive integers of the form $I=\{a,a+1,a+2,\ldots, b\}$. It might be the empty set. We will use the standard notation $I=\intl a,b\intr$ (in this notation we assume $a\leq b$). If $a>b$ we define $\intl a,b\intr=\emptyset.$

\end{notation}

\begin{definition}\label{todo} Let  $\bar{r}=(r_1,\cdots, r_n)\in \mathcal{S}^n$ be a reduced expression of an element $x$ of $W$. We define the set 
$$ T(\bar{r}):= \left\{\begin{array}{lll}
I\ \mathrm{integer\ interval\ }&\vert &  \ \mathrm{if}\ i,i+2\in I, \mathrm{then}\ r_i=r_{i+2}                  \\
             \end{array}\right\}$$
\end{definition}
\begin{definition}
We define the set of integer intervals $A(x)$, as the subset of $ \bigcup_{\bar{r}\in \mathcal{R}(x)}T(\bar{r})$ consisting of all maximal intervals with respect to the partial order given by inclusion, where the symbol $\mathcal{R}(x)$ stands for the set of all reduced expressions of $x$. 
\end{definition}

\begin{definition}\label{cores}
 If $A(x)=\bigcup_{j\in J}\intl a_j,b_j \intr$ then we define :
\begin{itemize}
\item $\mathrm{Gcores}(x)=\bigcup_{j\in J}\intl a_j+1,  b_j-1\intr$, the set of generalized cores. 
\item $\mathrm{ELGcores}(x)=\{\intl a,  b\intr\in \mathrm{Gcores}(x)\ \vert\ b-a\geq 1 \}$  the set of extra-large generalized cores. 
\item $\mathrm{cores}(x)= \{\intl a, b\intr\in \mathrm{Gcores}(x) \ \vert\ m(s_a,s_{a+1})=b-a+3\},$ the set of cores, where $(s_1,\ldots, s_n)$ is some reduced expression of $x$. It is easy to see that this set does not depends on the choice of the expression $(s_1,\ldots, s_n)$.
\end{itemize}
\end{definition}

\subsection{}

We define a total order in the set $\mathrm{cores}(x)$ in the following way : 
If $C=\intl a,b\intr, C'=\intl a',b'\intr \in \mathrm{cores}(x)$, we say that $C<C'$ if and only if $b<a'$. We remark that it is a total order because $W$ is extra-large.
We define the distance from $C$ to $C'$ by $$ \mathrm{dist}(C,C')=\mathrm{min}\{\vert a'-b\vert,\vert a-b'\vert\}.$$\label{K2} 

  \begin{definition}\label{K4}
 If $C=\intl a, b\intr\in \mathrm{cores}(x)$ we define first($C$)=$a$\label{K3} and last($C$)=$b$. The core $C$ is called right (resp. left) core if $s_{b-1}\neq s_{b+1}$ and $s_{a-1} = s_{a+1}$ (resp. $s_{b-1} = s_{b+1}$ and $s_{a-1}\neq s_{a+1}$). It is called empty core if  $s_{b-1}\neq s_{b+1}$ and $s_{a-1} \neq s_{a+1}$  and it is called filled core if it is none of the previous ones ($i.e.\ s_{b-1}= s_{b+1}, s_{a-1}= s_{a+1} $).
\end{definition}

\begin{remark}\label{filled}
 If $C<C'$, dist($C,C')=1$ and $C$ is a right or empty core, then $C'$ must be a right or filled core.
\end{remark}

 The following lemma is easy : 

  \begin{lemma}\label{herodoto}
 Let $ \mathrm{cores}(x)=\{C_1,C_2,\ldots, C_k\}$ with $C_1<C_2<\ldots <C_k,$ and let $g\in\mathrm{End}(\theta_{\bar{s}})$. Suppose that $ \mathrm{dist}(C_i,C_{i+1})\geq 2$ for some $1\leq i\leq k$. If last($C_i$)=$d-1$, $\bar{s'}=(s_1,\ldots, s_d), \, \bar{s''}=(s_{d+1},\ldots, s_n)$, then there exists $g'\in \mathrm{End}(\theta_{\bar{s'}})$, $g''\in \mathrm{End}(\theta_{\bar{s''}})$ such that $g=g'\otimes g''.$ 
\end{lemma}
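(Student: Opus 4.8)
The statement says: if the cores are $C_1 < \cdots < C_k$ and $\mathrm{dist}(C_i, C_{i+1}) \geq 2$ for some $i$, with $\mathrm{last}(C_i) = d-1$, then any $f$-type endomorphism $g$ of $\theta_{\bar s}$ splits as a tensor product $g' \otimes g''$ over the split point $d$.

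Let me think about what's going on. We have a reduced expression $\bar s = (s_1, \ldots, s_n)$ of $x$. The "cores" are certain integer intervals coming from positions where braid relations can be applied. The key point: an $f$-type morphism is a composition of morphisms ${}^i f = \mathrm{id}^i \otimes f_{sr} \otimes \mathrm{id}^{n-i-m(s,r)}$. Each such ${}^i f$ acts on a window of $m(s,r)$ consecutive tensor factors, applying a braid-relation morphism there.

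The idea: a braid morphism $f_{sr}$ can only be applied to a window $[i+1, i+m(s,r)]$ that "supports" a braid move, i.e., the subword $(s_{i+1}, \ldots, s_{i+m(s,r)})$ alternates between two generators and has length exactly $m(s,r)$. Such windows are exactly (contained in the neighborhoods of) cores. Since $\mathrm{dist}(C_i, C_{i+1}) \geq 2$, no braid window can straddle the cut between position $d$ and position $d+1$: to straddle, the window would have to contain an alternating subword of length $m \geq 4$ crossing the boundary, which would force a core crossing the boundary — but the gap between $C_i$ and $C_{i+1}$ is too wide for that.

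So the plan is:

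1. Show each generator ${}^i f$ appearing in the expression $g = {}^{i_\ell}f \circ \cdots \circ {}^{i_1}f$ has its active window $[i_j+1, i_j+m]$ either entirely $\subseteq [1,d]$ or entirely $\subseteq [d+1,n]$. This is the crux — I'd argue it from the definition of cores and $A(x)$: a window supporting a braid move corresponds to an alternating subword, which lies inside some interval of $T(\bar r)$ for some $\bar r \in \mathcal R(x)$, hence inside some core-neighborhood; and since $\mathrm{last}(C_i) = d-1$ and $\mathrm{first}(C_{i+1}) \geq (d-1) + 2 + \text{(core length stuff)}$ by $\mathrm{dist} \geq 2$, the core intervals and their immediate surroundings are separated across the cut by at least a full generator-step, so no length-$\geq 4$ braid window fits across.

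2. Having (1), split the tensor product $\theta_{\bar s} = \theta_{\bar{s'}} \otimes_R \theta_{\bar{s''}}$ with $\bar{s'} = (s_1,\ldots,s_d)$, $\bar{s''} = (s_{d+1},\ldots,s_n)$. Each generator ${}^{i_j}f$ then equals either $({}^{i_j}f)' \otimes \mathrm{id}_{\theta_{\bar{s''}}}$ or $\mathrm{id}_{\theta_{\bar{s'}}} \otimes ({}^{i_j}f)''$, where the primed/double-primed morphisms are $f$-type morphisms of $\theta_{\bar{s'}}$, resp. $\theta_{\bar{s''}}$. (One must check that the intermediate bimodules in the composition also factor through the same cut $d$ — this follows because the subword in positions $\le d$ and $> d$ is unchanged in reduced-expression-length by any braid move supported on one side, and the cut position $d$ is never moved since no core straddles it. Here one invokes that $\mathrm{cores}$ is independent of the reduced expression, so $C_i$ and $C_{i+1}$ — hence the cut — persist throughout.)

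3. Composition of morphisms respects $\otimes$: a composite of morphisms each of the form $h'\otimes\mathrm{id}$ or $\mathrm{id}\otimes h''$ equals $g' \otimes g''$ with $g' = $ (composite of the $h'$'s, in order, with id's inserted) and similarly $g''$, by the interchange law $(a\otimes b)\circ(c\otimes d) = (a\circ c)\otimes(b\circ d)$. Both $g'$ and $g''$ are $f$-type. This gives $g = g' \otimes g''$, and $g' \in \mathrm{End}(\theta_{\bar{s'}})$, $g'' \in \mathrm{End}(\theta_{\bar{s''}})$ as required.

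The main obstacle is step 1: rigorously extracting from the combinatorial definitions of $A(x)$, $\mathrm{Gcores}(x)$, $\mathrm{cores}(x)$ and the distance function the fact that "$\mathrm{dist}(C_i, C_{i+1}) \geq 2$" genuinely prevents \emph{any} braid window — including ones living in intervals of $T(\bar r)$ for non-obvious reduced expressions $\bar r$ — from crossing position $d$. One needs: a braid move on $\theta_{\bar r}$ at window $[j+1, j+m]$ requires $(r_{j+1},\ldots,r_{j+m})$ alternating, so $[j+1, j+m] \in T(\bar r)$, hence it sits inside a maximal interval of $\bigcup_{\bar r} T(\bar r)$, i.e., inside some $\intl a, b\intr \in A(x)$; the corresponding generalized core $\intl a+1, b-1\intr$ contains $[j+1+1, j+m-1]$ which has at least $m - 2 \geq 2$ elements, so it is in $\mathrm{ELGcores}(x)$, and in fact (when it supports an honest braid, not just alternation) lands in $\mathrm{cores}(x)$. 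Then any such core is one of the $C_\nu$. Since a window crossing $d$ would give a core crossing $d$, and $C_i$ is the last core with $\mathrm{last} \le d-1$ while $\mathrm{first}(C_{i+1}) - \mathrm{last}(C_i) \ge$ something forced by $\mathrm{dist} \geq 2$ plus extra-largeness (every $m(s,r) \geq 4$, so cores have length $\geq 2$ and their neighborhoods are fat), no core can contain both $d$ and $d+1$. Pinning down the exact inequality — i.e. that $\mathrm{dist} \geq 2$ is the precise threshold — is where the care lies, and one leans on extra-largeness ($m(s,r) > 3$) throughout so that the relevant intervals are wide enough for the combinatorics to be clean.
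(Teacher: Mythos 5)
Your steps (2) and (3) are fine, and your overall strategy --- show that every generator $\mathrm{id}^{i}\otimes f_{sr}\otimes\mathrm{id}$ in an expression of $g$ has its active window entirely on one side of the cut, then invoke the interchange law --- is surely the intended ``easy'' argument (the paper gives none). The gap is in step (1), exactly at the point you flagged, and it is not merely a matter of care: the assertion ``a window crossing $d$ would give a core crossing $d$'' is false. The window attached to a core $C=\intl a,b\intr$ is the maximal interval $\intl a-1,b+1\intr$ of length $m(s_a,s_{a+1})=b-a+3$, i.e.\ the core enlarged by one position on each side. Since for $C_i<C_{i+1}$ one has $\mathrm{dist}(C_i,C_{i+1})=\mathrm{first}(C_{i+1})-\mathrm{last}(C_i)$, the hypothesis $\mathrm{dist}\geq 2$ only gives $\mathrm{first}(C_{i+1})\geq d+1$, so the window of $C_{i+1}$ begins at $\mathrm{first}(C_{i+1})-1\geq d$; when $\mathrm{dist}=2$ it begins exactly at $d$ and therefore contains the last factor of $\theta_{\bar{s'}}$ as well as factors of $\theta_{\bar{s''}}$. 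Concretely, take $m(s,r)=m(r,t)=4$ and $\bar{s}=(s,r,s,r,t,r,t)$: then $\mathrm{cores}(x)=\{\intl 2,3\intr,\intl 5,6\intr\}$, $\mathrm{dist}=2$, $d=4$, and the $f$-type endomorphism $g=(\mathrm{id}^{3}\otimes f_{tr})\circ(\mathrm{id}^{3}\otimes f_{rt})$ is built from generators acting on the window $\intl 4,7\intr$, which straddles the cut. Your argument as written only proves the lemma for $\mathrm{dist}\geq 3$.

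Nor can step (1) be repaired by a local patch, because in the boundary case the conclusion itself is in doubt. In the example above $g=\mathrm{id}^{3}\otimes(f_{tr}\circ f_{rt})$ is the idempotent with image $\theta_s\theta_r\theta_s\otimes_R B'_{rtrt}$, and $B'_{rtrt}$ is an indecomposable summand of $\theta_r\theta_t\theta_r\theta_t$ that does not split off its first tensor factor (in the Hecke algebra $v^{-4}C'_{rtrt}$ is not of the form $(1+T_r)\cdot\eta(\langle N\rangle)$ for any Soergel bimodule $N$, since $C'_rC'_y$ always contains extra Kazhdan--Lusztig terms and all coefficients are positive); comparing $\eta$ of the images of both sides then obstructs any factorization $g=g'\otimes g''$ with $g',g''$ idempotent across the cut $\bar{s'}=(s,r,s,r)$, $\bar{s''}=(t,r,t)$. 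What survives when $\mathrm{dist}=2$ is that the two overlapping windows share only the single factor $\theta_{s_d}$, so by corollary \ref{fsr} and proposition \ref{dimmensdale} the generators supported on $\intl\cdot,d\intr$ commute with those supported on $\intl d,\cdot\intr$, yielding a commuting factorization with overlapping supports in the style of proposition \ref{aristoteles} rather than a pure tensor product. You would need either to strengthen the hypothesis to $\mathrm{dist}\geq 3$, or to replace the conclusion (and the subsequent use in theorem \ref{huge}) by such a commuting factorization; your proof does neither.
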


The following proposition is very important for the sequel.

  \begin{proposition}\label{aristoteles}
 Let  $C', C''\in \mathrm{cores}(x)$, $C'<C''$,  $ \mathrm{dist}(C',C'')=1$ and  $s_{d-2}=s_d=s_{d+2}$, for $d=$last$(C')+1$. Let us put $\bar{s'}=s_1\cdots s_d, \, \bar{s''}=s_{d}\cdots s_n$. If $g\in \mathrm{End}(\theta_{\bar{s}})$ is of $f$-type, there exists $g'\in \mathrm{End}(\theta_{\bar{s'}})$ and   $g''\in \mathrm{End}(\theta_{\bar{s''}})$ of $f$-type such that
\begin{equation}\label{chuchoca}
 g=\ (g'\otimes \mathrm{id}^{n-d+1})\circ (\mathrm{id}^d\otimes g'')=(\mathrm{id}^d\otimes g'')\circ (g'\otimes \mathrm{id}^{n-d+1})
\end{equation}
\end{proposition}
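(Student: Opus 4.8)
\textbf{Proof strategy for Proposition \ref{aristoteles}.}

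The plan is to reduce the statement to a ``local'' factorization around the position $d$ and then to bootstrap, using Proposition \ref{dimmensdale} as the key tool. The decisive structural observation is that the condition $s_{d-2}=s_d=s_{d+2}$ together with $\mathrm{dist}(C',C'')=1$ forces the index $d$ to sit between the two cores $C'$ and $C''$ in such a way that no generator $f_{sr}$ straddling position $d$ can ever appear in an $f$-type expression of $g$. Indeed, if some $^{i}\!f$ in a factorization of $g$ had its support $\intl i+1, i+m(\cdot,\cdot)\intr$ properly containing $d$, then after applying the braid move the resulting word would contain a subword witnessing that the interval around $d$ lies in some $T(\bar r)$ larger than what the cores $C'$ and $C''$ allow, contradicting that $C'$ and $C''$ are (distinct, adjacent) cores and hence maximal in the appropriate sense. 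So the first step is to make this precise: show that in \emph{any} $f$-type expression of $g$, every factor $^{i}\!f$ has support entirely inside $\intl 1,d\intr$ or entirely inside $\intl d, n\intr$ (a factor $f_{sr}$ with $m(s,r)=2$ cannot occur here since $W$ is extra-large, so supports have length $>3$ and cannot have $d$ as an interior point while lying on both sides).

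Once that is established, the second step is purely combinatorial sorting of the expression: given $g = {}^{i_k}\!f\circ\cdots\circ{}^{i_1}\!f$ with each support on one side of $d$, the ``left'' factors (support $\subseteq \intl 1,d\intr$, so of the form $h\otimes\mathrm{id}^{n-d+1}$ acting on $\theta_{\bar s'}\otimes(\cdots)$, noting $\bar s'$ has length $d$) and the ``right'' factors (support $\subseteq\intl d,n\intr$, of the form $\mathrm{id}^{d-1}\otimes h'$, or rather $\mathrm{id}^{d}\otimes h'$ after re-bracketing at position $d$ which is shared) commute with one another because they act on tensor factors meeting only in the single shared $\theta_{s_d}$, over which everything is $R$-linear. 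Collecting all left factors into $g'\in\mathrm{End}(\theta_{\bar s'})$ and all right factors into $g''\in\mathrm{End}(\theta_{\bar s''})$ gives both displayed equalities in \eqref{chuchoca} simultaneously, and $g',g''$ are manifestly of $f$-type. One subtlety: a factor whose support is exactly $\intl a, d\intr$ or $\intl d, b\intr$ touches position $d$ on the boundary; since $\theta_{\bar s'}$ and $\theta_{\bar s''}$ share the tensorand $\theta_{s_d}$ over $R$, such a factor is legitimately assigned to one side, and the $\otimes_R$ at position $d$ is exactly what makes the two reorderings in \eqref{chuchoca} agree.

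I would expect the main obstacle to be the first step — proving rigorously that no $f$-type generator can straddle position $d$. This is where the hypotheses $s_{d-2}=s_d=s_{d+2}$ and $\mathrm{dist}(C',C'')=1$ and the definitions of $\mathrm{cores}(x)$, first, last must be combined carefully: one needs to argue that a braid move whose support contains $d$ strictly would produce a reduced expression $\bar r$ and an interval $I\in T(\bar r)$ that merges (part of) $C'$, the singleton region around $d$, and (part of) $C''$ into one interval of the kind that defines a core, contradicting the assumption that $C'$ and $C''$ are two \emph{distinct} elements of $\mathrm{cores}(x)$ at distance exactly $1$. A clean way to phrase this is via Proposition \ref{dimmensdale} applied twice, peeling off $\theta_{s_1}\cdots$ from the left and, by the analogous left-right symmetric statement, from the right, to pin down that the value of $g$ on $1\otimes_{R^{s_d}}(\cdots)$ and on $(\cdots)\otimes_{R^{s_d}}1$ has the required block form; the factorization of $g$ itself then follows by the same kind of double induction on the number of ``straddling'' factors that appears in the proof of \ref{dimmensdale}, the base case being that there are none. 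Everything after that first step is routine bookkeeping about tensor products of bimodules.
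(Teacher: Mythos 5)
Your overall architecture --- sort the factors of an $f$-type expression of $g$ into those supported in $\intl 1,d\intr$ and those supported in $\intl d,n\intr$, commute the two families past each other, and collect them into $g'$ and $g''$ --- is essentially the paper's. Your instinct that the ``no straddling'' claim is where the hypotheses $s_{d-2}=s_d=s_{d+2}$ and $\mathrm{dist}(C',C'')=1$ enter is also right (the paper handles it by showing that the occurrences of the two boundary moves $^{\mathrm{first}(C')-2}f$ and $^{\mathrm{last}(C')}f$ come in cancelling pairs, since after the $C'$-move the letter at position $d$ flips and the $C''$-move is no longer applicable), although your parenthetical about supports of length $>3$ is backwards: a longer window is more, not less, prone to contain $d$ in its interior, and what rules this out is the maximality of the intervals defining the cores, not the window length.

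The genuine gap is in the step you dismiss as ``routine bookkeeping'': the commutation of a left factor with a right factor. The operators $g'\otimes \mathrm{id}$ and $\mathrm{id}\otimes g''$ overlap in the tensorand $\theta_{s_d}=R\otimes_{R^{s_d}}R$, i.e.\ in the two copies of $R$ flanking the sign $\otimes_{R^{s_d}}$, and two bimodule endomorphisms overlapping in a tensor factor do \emph{not} commute for free. Writing $\theta_{\bar{s}}=M\otimes_{R^{s_d}}N$ with $M=\theta_{s_1}\cdots\theta_{s_{d-1}}$ and $N=\theta_{s_{d+1}}\cdots\theta_{s_n}$: if $g'(m\otimes_{R^{s_d}}1)=\sum m_i\otimes_{R^{s_d}}r_i$ with some $r_i\notin R^{s_d}$, then $(\mathrm{id}\otimes g'')\bigl(\sum m_i\otimes_{R^{s_d}}r_in\bigr)$ need not equal $\sum m_i\otimes_{R^{s_d}}r_i\,g''(1\otimes n)$, and the two composites genuinely differ in general. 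What saves the day is precisely Proposition \ref{dimmensdale} and its mirror image: every $f$-type right factor satisfies $R_c(m\otimes n)\subseteq m\otimes N$ and every left factor satisfies $L_c(m\otimes n)\subseteq M\otimes n$, and \emph{only then} do all the $L$'s and $R$'s commute so that they can be collected into $g'$ and $g''$. In your write-up Proposition \ref{dimmensdale} is invoked only for the straddling question (where it is not the relevant tool --- that part is pure word combinatorics), and nothing replaces it at the commutation step, so the final assembly of \eqref{chuchoca} is unjustified as written.
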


  \begin{remark}\label{onasis}
 We remark that it is equivalent to say $s_{d-2}=s_d=s_{d+2}$ or to say that $C'$ is a left or a filled core and $C''$ is a right or a filled core.
\end{remark}
\begin{proof}
  Let $b+1=$first($C'$) and $g=^{i_k}\hspace{-0.17cm}f\circ \cdots\circ ^{i_2}\hspace{-0.17cm}f\circ ^{i_1}\hspace{-0.17cm}f$. Consider the set $X(C',C'')=\{1\leq p\leq k\ \vert\ i_p=b-1\mathrm{\ or\ }i_p=d-1\}$ and let $X(C',C'')=\{x_1,x_2,\ldots ,x_{2t}\}$ with $x_1<x_2<\ldots <x_{2t}$. As $s_{d-2}=s_d=s_{d+2}$, we can easily deduce by induction on $l$ that $i_{x_{2l}}=i_{x_{2l-1}}$ for $1\leq l\leq t,$ and that is the reason why $X(C',C'')$ has an even number of elements. We define $x_0=0$ and $x_{2t+2}=k+1$. For $0\leq c\leq t$ we define  the sets $$Z_c^<=\{p\ \vert  x_{2c}<p\leq x_{2c+2}\  \}, $$
$$Z_c^<=Z_c\cap \{ p\ \vert\ i_p<d-1 \} $$ and $$Z_c^{\geq}=Z_c\cap \{p\ \vert\ i_p\geq d-1 \}. $$
For $1\leq c\leq t$ we define $L_c=^{i_{\alpha}}\hspace{-0.17cm}f\circ  ^{i_{\beta}}\hspace{-0.17cm}f\circ \cdots\circ  ^{i_{\gamma}}\hspace{-0.17cm}f$ where the elements of $Z_c^<$ are $\alpha<\beta<\ldots <\gamma,$ and $R_c=^{i_{\delta}}\hspace{-0.17cm}f\circ  ^{i_{\epsilon}}\hspace{-0.17cm}f\circ \cdots\circ  ^{i_{\omega}}\hspace{-0.17cm}f$ where the elements of $Z_c^{\geq}$ are $\delta<\epsilon<\ldots <\omega.$

It is clear that if $z\in Z_c^<$ and $w\in Z_c^{\geq}$ then $^{i_{z}}\hspace{-0.12cm}f$  commutes with $^{i_{w}}\hspace{-0.12cm}f$, so $$L_c\circ R_c=R_c\circ L_c=^{i_{\rho}}\hspace{-0.17cm}f\circ \cdots ^{i_{\sigma}}\hspace{-0.17cm}f\circ ^{i_{\tau}}\hspace{-0.17cm}f,$$  where the elements of $Z_c$ are $\rho<\sigma<\ldots <\tau.$

We define $M=\theta_{s_1}\theta_{s_2}\cdots \theta_{s_{d-1}}$ and $N=\theta_{s_{d+1}}\cdots \theta_{s_n}$. So we have $\theta_{\bar{s}}=M\otimes_{R^{s_{d}}}N.$ By proposition \ref{dimmensdale} we have that for all $0\leq c\leq t$ if $m\otimes n\in M\otimes_{R^{s_{d}}}N$, then $L_c(m\otimes n)\subseteq  M\otimes n$ and $R_c(m\otimes n)\subseteq m\otimes N$, so for all $0\leq a,c\leq  t $ we have
\begin{equation}\label{hocico}
 R_a\circ L_c =L_c\circ R_a.
\end{equation}
 If we define $g'=L_t\circ L_{t-1}\circ \cdots \circ L_0$ and $g''=R_t\circ R_{t-1}\circ \cdots \circ R_0$ then equation (\ref{hocico}) allows us to finish the proof. \end{proof}

Before we state the next theorem we have to make a definition.
  \begin{definition}\label{cosmopolitan}
 Let $C\in \mathrm{cores}(x)$. We say that a tuple of elements $\bar{\alpha}=(i_k,\ldots, i_2,i_1)$, $k\in\mathbb{N}$ is  $\bar{s}$\label{K5}-compatible if the morphism $g=^{i_k}\hspace{-0.17cm}f\circ \cdots\circ ^{i_2}\hspace{-0.17cm}f\circ ^{i_1}\hspace{-0.17cm}f\in \mathrm{End}(\theta_{\bar{s}})$ is well-defined, and in this case we say that $g$ is the morphism associated to $\bar{\alpha}$.  If $\bar{\alpha}$ is $\bar{s}$-compatible we define $N_{\bar{\alpha}}(C)=\mathrm{card}(\{p\ \vert\ i_p=\mathrm{first}(C)-2\})$\label{K6}, where card stands for cardinality.
\end{definition}

  \begin{theorem}\label{huge}
 Let $\bar{\alpha}$ and $\bar{\beta}$ be two $\bar{s}$-compatible tuples, and $g$, $h$ the morphisms associated to $\bar{\alpha}$ and $\bar{\beta}$ respectively.
 We have that $g=h$ if and only if for all $C\in \mathrm{cores}(x)$ we have $N_{\bar{\alpha}}(C)=0\Leftrightarrow N_{\bar{\beta}}(C)=0$.
\end{theorem}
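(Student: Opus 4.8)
The statement is a criterion for when two $f$-type endomorphisms of $\theta_{\bar s}$ coincide: the only invariant that matters is, for each core $C$, whether the morphism "acts at least once" at the position $\mathrm{first}(C)-2$ (equivalently, whether it pushes through the braid at that core at all). The plan is to prove the two implications separately, with the reverse implication being the substantial one. For the forward direction ($g=h \Rightarrow$ same zero-pattern of $N_{\bar\alpha}$), I would argue by contradiction: suppose some core $C$ has $N_{\bar\alpha}(C)=0$ but $N_{\bar\beta}(C)>0$. Using Lemma \ref{herodoto} and Proposition \ref{aristoteles} one can cut $\theta_{\bar s}$ into tensor factors along cores that are "not activated", reducing to a single core situation; there one checks directly (using the explicit formula of Proposition \ref{calculo}, or the one-dimensionality of $DZ_{sr}$ in Proposition \ref{literato}) that applying $f_{sr}$ once at that core changes the morphism — essentially because $f_{sr}$ lands in a smaller bimodule and cannot be undone by further $f$-type morphisms that never touch that spot. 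So the zero/non-zero status of $N_{\bar\alpha}(C)$ is a genuine invariant of $g$.

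For the reverse direction, assume $\bar\alpha,\bar\beta$ have the same set $S=\{C : N_{\bar\alpha}(C)=0\}=\{C: N_{\bar\beta}(C)=0\}$ of non-activated cores; I want to show $g=h$. The strategy is to produce a normal form. First, I would use Proposition \ref{aristoteles} (together with Remark \ref{onasis}) to split off every core that is activated on both sides but sits at distance $1$ from its neighbour with $s_{d-2}=s_d=s_{d+2}$, and Lemma \ref{herodoto} to split along every core with distance $\geq 2$ to its neighbour; iterating, $\theta_{\bar s}$ decomposes as a tensor product of blocks, each block containing (up to the combinatorics of left/right/filled/empty cores organised by Remarks \ref{filled} and \ref{onasis}) essentially one "active" core, and on each block the morphism is determined up to the choices recorded by the $N$-values. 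The key reduction is then: on a single block with one core $C$, any two $f$-type endomorphisms that both activate $C$ (i.e. $N>0$) are equal. This should follow because $\mathrm{End}$ of the relevant bimodule, restricted to the component governed by $C$, is controlled by $DZ_{sr}$ which is one-dimensional (Proposition \ref{literato}): once you have applied $f_{sr}$ once at the core, applying it again (or applying the reverse $f_{rs}$ and then $f_{sr}$) only rescales or idempotently stabilises, and Corollary \ref{fsr} / Proposition \ref{dimmensdale} show the image is already pinned down. Conversely if $N_{\bar\alpha}(C)=0$ then the block contributes the identity on that factor, which matches since $N_{\bar\beta}(C)=0$ too.

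Concretely, the inductive skeleton mirrors Proposition \ref{dimmensdale}: induct on the number of cores $k$ and, within that, on $\sum_C N_{\bar\alpha}(C)$. In the inductive step, pick a core $C$ that is activated and "leftmost" among activated ones; using Propositions \ref{aristoteles} and \ref{dimmensdale} commute all the $^{i}\hspace{-0.09cm}f$ that act strictly left of $\mathrm{first}(C)-2$ past those that act at or to the right, so that $g = g_{\mathrm{left}}\otimes g_{\mathrm{right}}$ (or a composite of such), where $g_{\mathrm{left}}$ is an $f$-type endomorphism of a shorter $\theta_{\bar s'}$; apply induction to $g_{\mathrm{left}}$ versus the corresponding piece of $h$, and handle the single remaining core factor by the one-dimensionality argument above. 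The same decomposition applies to $h$ because it activates exactly the same cores, so the two normal forms agree factor by factor.

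\textbf{Main obstacle.} The genuinely hard step is the single-core base case: showing that any two $f$-type endomorphisms of the dihedral-type bimodule $\theta_s\theta_r\theta_s\cdots$ which both "use" the braid morphism at the core position are literally equal, not merely equal up to scalar. One must rule out that composing $f_{sr}$ with itself along different circuits in $\mathcal{R}(x)$ produces different idempotents; this is where the explicit formula of Proposition \ref{calculo}, the normalisation of $f_{sr}$ on the distinguished generator $1\otimes x_s\otimes x_r\otimes\cdots$, and the containment results (Corollary \ref{fsr}, Proposition \ref{dimmensdale}) have to be combined carefully. The bookkeeping of how left/right/empty/filled cores interact at distance $1$ (Remarks \ref{filled}, \ref{onasis}) is the place where the extra-large hypothesis $m(s,r)>3$ is essential and where the argument is most delicate.
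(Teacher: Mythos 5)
Your easy cases line up with the paper's: the ``only if'' direction, the splitting along cores at distance $\geq 2$ via Lemma \ref{herodoto}, the induction on the number of cores, the base case $k=1$ via $f_{sr}\circ f_{rs}\circ f_{sr}=f_{sr}$, and the use of Proposition \ref{aristoteles} to split between a left-or-filled core and an adjacent right-or-filled core. But your central reduction --- that after these splittings $\theta_{\bar s}$ decomposes into tensor blocks each containing ``essentially one active core,'' so that everything follows from a single-core statement and the one-dimensionality of $DZ_{sr}$ --- fails precisely in the case that carries all the difficulty. Proposition \ref{aristoteles} requires $s_{d-2}=s_d=s_{d+2}$, i.e.\ (Remark \ref{onasis}) that the left neighbour be a left or filled core \emph{and} the right neighbour be a right or filled core. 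When all consecutive cores are at distance $1$, all are activated ($N_{\bar\alpha}(C_i)\neq 0$ for every $i$), and each segment between consecutive filled cores contains an empty core $C_{i_p}$, there is no position at which either Lemma \ref{herodoto} or Proposition \ref{aristoteles} applies: on both sides of an empty core the hypothesis of Remark \ref{onasis} is violated, so the commutation $R_a\circ L_c=L_c\circ R_a$ that underlies the tensor splitting is simply not available. The morphism genuinely does not factor through single-core blocks there, and applications of $f$ at one core interact with those at cores arbitrarily far away across the empty core.

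This is exactly the case the paper spends most of its proof on, by a different mechanism that your plan does not contain: pass to tuples $\bar\alpha$, $\bar\beta$ of minimal length in the $<\hspace{-0.45cm}^{\bullet}\ $ order, prove that minimality forces $N_{\bar\alpha}(C_{j_p})=2$ at filled cores (Lemma \ref{6.2}) and forces the ``pyramid'' shape $\bar\alpha^{\mathrm{left}}(p)=(1,2,\ldots,u(p)/2,u(p)/2,\ldots,2,1)$ on each segment running from a filled core to the empty core (Lemma \ref{hermosillo} and its mirror), and then show via the ``morphism order'' on expressions that commutation relations bring any such expression to a unique maximal normal form $\overline{\omega}=(\omega_m,\ldots,\omega_0)$ whose factors act on explicitly listed sets of cores $T_i$. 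Your alternative of inducting on $\sum_C N_{\bar\alpha}(C)$ does not substitute for this: reducing that sum is not what makes two expressions comparable (the paper reduces tuple length, and even then the surviving minimal expressions still need the sweep-structure analysis to be matched up). So the proposal has a genuine gap at the heart of the theorem; the single-core base case you flag as the ``main obstacle'' is in fact the easy part, and the real obstacle is the non-locality created by empty cores sitting between activated cores at distance $1$.
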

\begin{proof}
 The ``only if'' part is evident. We will prove the ``if'' part. Let $\mathrm{cores}(x)=\{C_1,\ldots , C_k\}$ with $C_1<C_2<\ldots <C_k$. We will prove the theorem by induction over $k$. 

It is easy to see using the definition of $f_{sr}$ that $f_{sr}\circ f_{r,s}\circ f_{sr}=f_{sr}$ and this allows us to prove the theorem for $k=1.$

We suppose the theorem is true for $k-1$ and we will prove it for $k$. If there exists $1\leq i\leq k-1$ such that $ \mathrm{dist}(C_i,C_{i+1})\geq 2$, then lemma \ref{herodoto} and the induction hypothesis allows us to conclude. So we suppose $ \mathrm{dist}(C_i,C_{i+1})=1$ for all $1\leq i\leq k-1$.

If there exists $1\leq i\leq k$ such that $N_{\bar{\alpha}}(C_i)=0$ we can conclude by induction hypothesis, so we suppose $N_{\bar{\alpha}}(C_i)\neq 0$ for all $1\leq i\leq k$.

Let $(j_i)_{1\leq i\leq \gamma}$\label{K7} be an ascending sequence of numbers such that the set of  filled cores is exactly $\{C_{j_i}\}_{1\leq i\leq \gamma}$. 
By proposition \ref{aristoteles}, remark \ref{onasis} and the fact that $ \mathrm{dist}(C_i,C_{i+1})=1$ we see that if $C_i$ is a filled core, then $C_{i+1}$ can not be a filled core, so $j_{i+1}-j_i\geq 2.$ We can conclude that the set $\mathcal{R}_i=\{j_i+1,j_i+2,\ldots , j_{i+1}-1\}$\label{K8} is non empty for all $1\leq i< \gamma$.

 Let us  suppose that there exist an integer $p$ such that for all $i\in \mathcal{R}_p$ the core $C_i$ is not an empty core.  So  if $i\in \mathcal{R}_p$, then $C_i$ is either left or right core. By remark \ref{filled}, we know that there exist an integer $v$, with $j_{p}+1\leq v\leq j_{p+1}$, such that if $j_p+1< i\leq v $ then $C_i$ is a left core and if $v<i<j_{p+1}$ then $C_i$ is a right core.  We are in the case treated in proposition \ref{aristoteles} with $C'=C_v$ and $C''=C_{v+1}$ (see remark \ref{onasis}), so we can conclude by induction hypothesis.

So from now on we suppose that for all integers $1\leq p<\gamma$ there exists some $i_p\in \mathcal{R}_p$\label{K9} such that the core $C_{i_p}$ is an empty core. By remark \ref{filled}, if $j_p<i<i_p$ then $C_i$ is a left core and if $i_p<i<j_{p+1}$, then $C_i$ is a right core. Then $i_p$ is well-defined for $1\leq p< \gamma$.


 Let $\bar{\alpha}=(\alpha_w,\ldots,\alpha_1)$, so  $g=^{\alpha_w}\hspace{-0.17cm}f\circ \cdots\circ ^{\alpha_2}\hspace{-0.17cm}f\circ ^{\alpha_1}\hspace{-0.17cm}f.$ If we consider the set of all  $\bar{s}$-compatible tuples satisfying that $g$ is their associated morphism (see definition \ref{cosmopolitan}), with the partial order $<\hspace{-0.45cm}^{\bullet}\ $\label{K10} given by the length of the tuple, we can assume without loss of generality that $\bar{\alpha}$ and $\bar{\beta}$ are minimal for $<\hspace{-0.45cm}^{\bullet}\ \ .$ 

  \begin{lemma}\label{6.2} For all $1\leq p< \gamma$ we have $N_{\bar{\alpha}}(C_{j_p})=2$ (recall that $C_{j_p}$ is a filled core). 
\end{lemma}
\begin{proof} Let us suppose that this is false, so there exists some $N_{\bar{\alpha}}(C_{j_p})\geq 4.$
By definition this means that the set $Z=\{ 1\leq i\leq w\ \vert\ \alpha_i =\mathrm{first}(C_{j_p})-2 \}$ has more than three elements. We note $a=\mathrm{first}(C_{j_p})-2 .$ Let $z_1<z_2<z_3$ be the first three elements of $Z$. As by definition $C_{j_{p}}$ is a filled core, we have that $C_{j_{p}-1}$ is either a right or an empty core and $C_{j_{p}+1}$ is either a left or an empty core, so we can conclude that  $^{\alpha_{z_3}}\hspace{-0.12cm}f$ commutes with $^{\alpha_{z_3-1}}\hspace{-0.12cm}f\circ \cdots\circ ^{\alpha_{z_2+2}}\hspace{-0.17cm}f\circ ^{\alpha_{z_2+1}}\hspace{-0.17cm}f$, so by commuting this two morphisms we get : $$g=^{\alpha_w}\hspace{-0.17cm}f\circ \cdots\circ \widehat{^{\alpha_{z_3}}\hspace{-0.12cm}f}  \circ \cdots \circ^{\alpha_{z_2+2}}\hspace{-0.17cm}f \circ^{\alpha_{z_2+1}}\hspace{-0.17cm}f\circ (^a\hspace{-0.04cm}f\circ ^a\hspace{-0.17cm}f)\circ^{\alpha_{z_2-1}}\hspace{-0.17cm}f\circ \cdots \circ ^{\alpha_2}\hspace{-0.17cm}f\circ ^{\alpha_1}\hspace{ -0.17cm}f$$
where $\widehat{^{\alpha_{z_3}}\hspace{-0.12cm}f}$ means that we skip this term.  Because of proposition \ref{dimmensdale}, $(^a\hspace{-0.04cm}f\circ ^a\hspace{-0.17cm}f)$ commutes with $^{\alpha_{z_2-1}}\hspace{-0.08cm}f\circ \cdots \circ^{\alpha_{z_1+2}}\hspace{-0.17cm}f \circ^{\alpha_{z_1+1}}\hspace{-0.17cm}f$, so again by commuting this two morphisms we get
$$g= ^{\alpha_w}\hspace{-0.17cm}f\circ \cdots\circ \widehat{^{\alpha_{z_3}}\hspace{-0.12cm}f} \circ \cdots\circ \widehat{^{\alpha_{z_2}}\hspace{-0.12cm}f} \circ \cdots \circ^{\alpha_{z_1+1}}\hspace{-0.17cm}f \circ (^a\hspace{-0.04cm}f\circ ^a\hspace{-0.04cm}f\circ ^a\hspace{-0.17cm}f)\circ^{\alpha_{z_1-1}}\hspace{-0.17cm}f\circ \cdots \circ ^{\alpha_2}\hspace{-0.17cm}f\circ ^{\alpha_1}\hspace{-0.17cm}f$$

but the fact that $^a\hspace{-0.1cm}f\circ ^a\hspace{-0.1cm}f\circ^a\hspace{-0.1cm}f=^a\hspace{-0.1cm}f$ contradicts the minimality of $\bar{\alpha}$ in the $<\hspace{-0.45cm}^{\bullet}\ $ order. So this proves lemma \ref{6.2}. \end{proof}

Let us recall some of the notation we have introduced throughout this proof. Recall that $\bar{s}=(s_1,\ldots, s_n)$, $g=^{\alpha_w}\hspace{-0.17cm}f\circ \cdots\circ ^{\alpha_2}\hspace{-0.17cm}f\circ ^{\alpha_1}\hspace{-0.17cm}f,$ $\mathrm{cores}(x)=\{C_1,\ldots , C_k\}$,  the set of  filled cores is  $\{C_{j_i}\}_{1\leq i\leq \gamma}$ and the set of empty cores is   $\{C_{i_p}\}_{1\leq p< \gamma}$.

  \begin{definition}\label{coimbrero}For $1\leq i\leq k$ we define $f(C_i)= ^y\hspace{-0.17cm}f$, where $y=$first($C_i$)$-2.$
 We define $i_{\gamma}=n$. For $1\leq p\leq \gamma+1$, we consider the set $$V^{\mathrm{left}}_p=\{ 1\leq i\leq w\ \vert\  ^{\alpha_i}\hspace{-0.1cm}f=f(C_q) \ \mathrm{for}\ j_p<q\leq i_p \},$$\label{K11} and put $V^{\mathrm{left}}_p=\{v_1,v_2,\ldots,v_{u(p)}\}$ with $v_1<v_2<\ldots <v_{u(p)}$. We define a sequence $\bar{\alpha}^{\mathrm{left}}(p)=(n_1,n_2,\ldots, n_{u(p)})$\label{K13} in the following way : if $^{\alpha_{v_i}}f=f(C_q)$, we define $n_i=q-j_p.$ 
\end{definition}\label{K15}

\begin{lemma}\label{hermosillo} For $1\leq p\leq \gamma$ we have \begin{equation}\label{diof1}\bar{\alpha}^{\mathrm{left}}(p)=(1,2,\ldots, u(p)/2,u(p)/2,\ldots , 2,1),\end{equation}
 where $u(p)/2=i_p-j_p.$ 
\end{lemma}
\begin{proof}
 Let us prove by induction in $l$ that $(n_1,n_2,\ldots,n_l)=(1,2,\ldots, l)$ for $1\leq l\leq u(p)/2,$ (the proof of $(n_{u(p)/2+1}, n_{u(p)/2+2},\ldots, n_{u(p)/2+l})=(u(p)/2,\ldots, u(p)/2-l+1) $ is similar). As $C_i$ is a left core for $j_p<i<i_p$, it is clear that $n_1=1.$ Let us suppose  $(n_1,n_2,\ldots,n_{l-1})=(1,2,\ldots, l-1)$ for $1\leq l\leq u(p)/2$ : we will prove that $n_{l}=l$. If this is not the case, we must have $n_l=l-1$.  We now have two possibilities for $n_{l+1}$ : it can be $l-1$ or $l-2$. But if $n_{l+1}=l-1$, with commutations relations we will obtain a subexpression of the form $^a\hspace{-0.1cm}f\circ ^a\hspace{-0.1cm}f\circ^a\hspace{-0.1cm}f$, for some integer $a$, and this contradicts the minimality of $\bar{\alpha}$  in the $<\hspace{-0.45cm}^{\bullet}\ $ order. So we conclude that  $n_{l+1}=l-2.$

 As $C_i$ is a left core for $j_p<i<i_p$ we have that $\vert n_i-n_{i+1}\vert \leq 1$ for all $i,$ and the hypothesis we made in this proof that $N_{\bar{\alpha}}(C_i)\neq 0$ for all $1\leq i\leq k$ allows us to conclude that $\{n_i\}_{1\leq i\leq u(p)}=\{1,2,\ldots,u(p)/2\}$. So consider $m$ the minimum of the set $\{ i>l\, \vert\, n_i=l-1 \}. $ Because of the proposition \ref{dimmensdale} we deduce that with commutation relations we can obtain an expression of $g$ with a subexpression of the form  $^{\alpha_{v_{l-1}}}\hspace{-0.1cm}f\circ ^{\alpha_{v_{l-2}}}\hspace{-0.1cm}f\circ ^{\alpha_{v_{m}}}\hspace{-0.1cm}f$, which again contradicts the minimality of $\bar{\alpha}$  in the $<\hspace{-0.45cm}^{\bullet}\ $ order thus proving the lemma.

\end{proof}
We now repeat definition \ref{coimbrero} but changing left by right :
  \begin{definition}We define $i_0=1$.
We now consider, for $0\leq p\leq \gamma $ $$V^{\mathrm{right}}_p=\{ 1\leq i\leq w\ \vert\  ^{\alpha_i}\hspace{-0.1cm}f=f(C_q) \ \mathrm{for}\ i_p\leq q< j_{p+1} \},$$\label{K12} and put $V^{\mathrm{right}}_p=\{d_1,d_2,\ldots,d_{e(p)}\}$ with $d_1<d_2<\ldots <d_{e(p)}$. We define a sequence $\bar{\alpha}^{\mathrm{right}}(p)=(m_1,m_2,\ldots, m_{e(p)})$\label{K14} in the following way : if $^{\alpha_{d_i}}f=f(C_q)$, we define $m_q=j_{p+1}-q.$ 
\end{definition}

By similar arguments as in lemma \ref{hermosillo} we conclude that 
\begin{equation}\label{diof2}\bar{\alpha}^{\mathrm{right}}(p)=(1,2,\ldots, e(p)/2,e(p)/2,\ldots , 2,1),\end{equation}
 where $e(p)/2= j_{p+1}-i_p.$

The two equations (\ref{diof1}) and (\ref{diof2}) allows us to conclude that for all $1\leq p\leq \gamma+1$
\begin{equation}\label{oriundo1}
 \bar{\alpha}^{\mathrm{left}}(p)=\bar{\beta}^{\mathrm{left}}(p)=(1,2,\ldots, u(p)/2,u(p)/2,\ldots , 2,1)
\end{equation}
and for all $0\leq p \leq \gamma $
\begin{equation}\label{oriundo2}
 \bar{\alpha}^{\mathrm{right}}(p)=\bar{\beta}^{\mathrm{right}}(p)=(1,2,\ldots, e(p)/2,e(p)/2,\ldots , 2,1)
\end{equation}

To finish the proof of theorem \ref{huge}, we will only need equations (\ref{oriundo1}) and (\ref{oriundo2}).

Before we can prove  theorem \ref{huge} we need some definitions.
  \begin{definition}\label{K16}
We will say that a morphism $\delta\in$Hom($\theta_{t_1}\cdots \theta_{t_p},\theta_{r_1}\cdots \theta_{r_p}$) is  a $p$-morphism if there exists a sequence of integers $(a_0,\ldots , a_p)$ such that $\delta=\mathrm{id}^{a_0}\otimes f\otimes \mathrm{id}^{a_1}\otimes f\otimes \cdots \mathrm{id}^{a_{p-1}}\otimes f\otimes \mathrm{id}^{a_p}.$
\end{definition}
 
  \begin{definition}\label{K17}
Let $g=(g_m, g_{m-1}, \ldots, g_1)$ and $g'= (g'_d, g'_{d-1},\ldots, g'_1)$ be two expressions of the same morphism (recall \ref{K165}), such that  $g_i$ is a $p_i$-morphism and $g'_i$ is a $p'_i$-morphism. We say that $g<g'$ if and only if the sequence $(p_1,\ldots, p_m)<(p'_1,\ldots, p'_d)$ in the lexicographical order.  We will call this order the ``morphism'' order.
\end{definition}

 \begin{definition}
 Consider two morphisms $g :M_1 \rightarrow N_1$ and $f :M_2\rightarrow N_2.$ The relation $(f\otimes \mathrm{id}_{N_1})\circ (\mathrm{id}_{M_2}\otimes g)= (\mathrm{id}_{N_2}\otimes g)\circ (f\otimes \mathrm{id}_{m_1})$, satisfied in all tensor categories, will be called commutation relation.
\end{definition}

We choose $\bar{g}=(g_m, g_{m-1}, \ldots, g_0)$ an expression of $g$ and $\bar{h}=(h_d, h_{d-1}, \cdots, d_0)$ an expression of $h$, both maximal in the morphism order. We will prove that $d=m$ and that $g_i=h_i$ for all $0\leq i\leq m$. In fact we will prove more, we will  explicitly determine all the $g_i$.  

 We will say that a $p$-morphism $\omega$ acts on the cores $T=\{C_{n_1},C_{n_2},\ldots , C_{n_p}\}$ if  $\omega$ does not act as the identity exactly in that set of cores, and evidently $\omega$ is determined by $T$.

We put $y_p=\mathrm{max}\{i_p-j_p,j_{p+1}-i_p\}$. For all $i\geq 0$ for which this definition is not empty we define the following sets :
$$ T_i^{1,p}=C_{j_p+i< i_p}\cup C_{j_p-i >i_{p-1}}  $$
$$ T_i^{2,p}=\delta_{i,y_p}\cdot C_{i_p}  $$
$$ T_i^{3,p}= \delta_{i,y_p+1}\cdot C_{i_p} $$
$$ T_i^{4,p}=  C_{i_p+y_p-i>j_p}\cup  C_{i_p+i-y_p<j_{p+1}} $$
and $$ T_i=\bigcup_{1\leq p\leq \gamma}T_i^{1,p}\cup T_i^{2,p}\cup T_i^{3,p}\cup T_i^{4,p}.  $$

It is easy to see that if $\omega_i$ acts on $T_i$, then  the expression $\overline{\omega}=(\omega_m, \omega_{m-1}, \ldots, \omega_0)$ is maximal in the morphism order and satisfies equations (\ref{oriundo1}) and (\ref{oriundo2}). So to prove  Theorem \ref{huge} we only need to prove that with commutation relations we can pass from $\bar{g}$ to $\overline{\omega}$. We will prove the following property by induction on $i$ :

Property ($\ast$) : With commutation relations we can pass from $\bar{g}$ to an expression of the form $(^{b_n}\hspace{-0.07cm}f,\ldots ,^{b_1}\hspace{-0.15cm}f,\omega_i, \ldots, \omega_0)$, with $b_1,\ldots, b_n\in \mathbb{Z}$.

 We have that $\omega_0$ is by definition the morphism that acts on the set $\{C_{j_p}\}_{1\leq p\leq \gamma}$, so Property ($\ast$) is clear for $i=0$. 

Let us suppose that Property ($\ast$) is true for $i$, we will prove it for $i+1$. So we have an expression $(^{b_n}\hspace{-0.08cm}f,\ldots ,^{b_1}\hspace{-0.15cm}f,\omega_i, \ldots, \omega_0)$. Let us consider $1\leq p\leq \gamma.$ We have to consider four  cases : 
\begin{enumerate}
 \item  $i< y_p$
\item $i= y_p$
\item $i= y_p+1$
\item $i>y_p$
\end{enumerate}
Let $1\leq a\leq 4.$ By equations (\ref{oriundo1}) and (\ref{oriundo2}) we see that in case $a$ we can make commutation relations in  the subexpression $(^{b_n}\hspace{-0.08cm}f,\ldots ,^{b_1}\hspace{-0.15cm}f)$, and arrive to an expression of the form $(^{b'_u}\hspace{-0.1cm}f,\ldots ,^{b'_1}\hspace{-0.15cm}f, \omega_{i+1}^p)$, where $\omega_{i+1}^p$ is a $p$-morphism that acts exactly on $T_{i+1}^{a,p}$. As we can do this for all $p$, we can go with commutation relations from  $(^{b_n}\hspace{-0.08cm}f,\ldots ,^{b_1}\hspace{-0.15cm}f)$ to an expression of the form $$(^{b''_v}\hspace{-0.1cm}f,\ldots ,^{b''_1}\hspace{-0.15cm}f, \omega_{i+1}^{\gamma},\ldots,\omega_{i+1}^2,\omega_{i+1}^1)$$
The fact that $( \omega_{i+1}^{\gamma},\ldots,\omega_{i+1}^2,\omega_{i+1}^1)$ is an expression of the morphism $\omega_{i+1}$ allows us to finish the proof of property ($\ast$) and of theorem \ref{huge}.

\end{proof}
\vspace{0.7cm}
\subsection{The idempotents}\label{olaila}

 We can now define a special morphism $f_{\bar{s}}=^{i_k}\hspace{-0.17cm}f\circ \cdots\circ ^{i_2}\hspace{-0.1cm}f\circ ^{i_1}\hspace{-0.1cm}f\in \mathrm{End}(\theta_{\bar{s}})$\label{K18} : it is the morphism characterized by the fact that, if $\bar{i}=(i_k,\ldots, i_1)$, then  $N_{\bar{i}}(C)\neq 0$ for all $C\in \mathrm{cores}(x)$. Theorem \ref{huge} shows that if this morphism exists, it is unique. Now we will show that at least one such morphism  exists.

If $x\in W$ we know that we can pass from any reduced expression $\bar{s}$ of $x$ to any other $\bar{t},$ by a sequence of braid relations. This induces a morphism of $f$-type in Hom($\theta_{\bar{s}},\theta_{\bar{t}}$). If we pass through all reduced expressions of $x$ in any way we want, we obtain a morphism that satisfies the requirements for $f_{\bar{s}}$. So we conclude that $f_{\bar{s}}$ is a well-defined morphism.

Theorem \ref{huge} tells us that $f_{\bar{s}}^2=f_{\bar{s}}$. This means that $f_{\bar{s}}$ is an idempotent and so we conclude that the bimodule 
$f_{\bar{s}}(\theta_{\bar{s}})$ (that we denote by $E_{\bar{s}}$\label{K19}) is an element of Soergel's category.

  \begin{theorem}\label{cototisimo}
 If $\bar{s}$ and $\bar{t}$ are two reduced expressions of the same element  $x\in W$ then $E_{\bar{s}}$ is isomorphic to  $E_{\bar{t}}$.
\end{theorem}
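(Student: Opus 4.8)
The plan is to show that the $f$-type morphism obtained by passing from $\bar{s}$ to $\bar{t}$ and back, when restricted appropriately, realizes an isomorphism between the images of the two idempotents $f_{\bar{s}}$ and $f_{\bar{t}}$. Concretely, let $\varphi \in \mathrm{Hom}(\theta_{\bar{s}}, \theta_{\bar{t}})$ be an $f$-type morphism obtained by composing morphisms of the form $^i\hspace{-0.09cm}f$ along a chain of braid relations taking $\bar{s}$ to $\bar{t}$, and let $\psi \in \mathrm{Hom}(\theta_{\bar{t}}, \theta_{\bar{s}})$ be an $f$-type morphism obtained along a chain taking $\bar{t}$ back to $\bar{s}$. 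I would first observe that for any reduced expression $\bar{r}$ of $x$ one has the analogous combinatorial data $\mathrm{cores}(x)$, $\mathrm{first}(C)$, etc., which depends only on $x$ and not on $\bar{r}$ (this is already noted in Definition \ref{cores} and the surrounding discussion), so the statement ``$N_{\bar{i}}(C) \neq 0$ for all $C$'' makes sense uniformly.

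The key step is to prove that $\psi \circ \varphi = f_{\bar{s}}$ and, symmetrically, $\varphi \circ \psi = f_{\bar{t}}$. For the first equality: $\psi \circ \varphi$ is an $f$-type endomorphism of $\theta_{\bar{s}}$, so by the uniqueness part of Theorem \ref{huge} it suffices to check that its associated tuple $\bar{i}$ satisfies $N_{\bar{i}}(C) \neq 0$ for every $C \in \mathrm{cores}(x)$. This should follow because the chain of braid relations from $\bar{s}$ to $\bar{t}$ visits, in particular, reduced expressions in which each core $C$ has been ``resolved'' — that is, the braid relation associated to $C$ is applied at least once, contributing a factor $^{\mathrm{first}(C)-2}\hspace{-0.09cm}f$ to the composition. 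Here I would need to be slightly careful: the factor must genuinely appear among the $^i\hspace{-0.09cm}f$ making up $\psi\circ\varphi$, which amounts to the elementary fact that any path in the graph $\mathcal{R}(x)$ joining two distinct vertices that differ (even indirectly) by a braid move of type $(s_a,s_{a+1})$ inside the window of a core $C$ must cross an edge labelled by that braid move. Then $\psi\circ\varphi = f_{\bar{s}}$, hence $f_{\bar{s}}\circ(\psi\circ\varphi) = f_{\bar{s}}$ and $(\psi\circ\varphi)\circ f_{\bar{s}} = f_{\bar{s}}$ by idempotency and Theorem \ref{huge} again (the composite is still $f$-type and still hits every core).

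Granting $\psi \circ \varphi = f_{\bar{s}}$ and $\varphi \circ \psi = f_{\bar{t}}$, the isomorphism is immediate by a standard idempotent argument: restrict $\varphi$ to $E_{\bar{s}} = f_{\bar{s}}(\theta_{\bar{s}})$ and $\psi$ to $E_{\bar{t}} = f_{\bar{t}}(\theta_{\bar{t}})$. Since $\varphi \circ f_{\bar{s}} = \varphi \circ \psi \circ \varphi = f_{\bar{t}} \circ \varphi$, the map $\varphi$ sends $E_{\bar{s}}$ into $E_{\bar{t}}$, and similarly $\psi$ sends $E_{\bar{t}}$ into $E_{\bar{s}}$; on these images the two compositions are $\psi\circ\varphi|_{E_{\bar{s}}} = f_{\bar{s}}|_{E_{\bar{s}}} = \mathrm{id}_{E_{\bar{s}}}$ and $\varphi\circ\psi|_{E_{\bar{t}}} = \mathrm{id}_{E_{\bar{t}}}$. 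Hence $\varphi$ and $\psi$ restrict to mutually inverse isomorphisms $E_{\bar{s}} \cong E_{\bar{t}}$ of graded $(R,R)$-bimodules, proving the theorem.

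The main obstacle I anticipate is the combinatorial verification that $\psi \circ \varphi$ has $N_{\bar{i}}(C) \neq 0$ for every core $C$ — i.e., that a round trip $\bar{s} \rightsquigarrow \bar{t} \rightsquigarrow \bar{s}$ through $\mathcal{R}(x)$ necessarily applies each core's braid relation. One clean way to sidestep fuss is to take $\varphi$ and $\psi$ to be the morphisms obtained by a path that visits \emph{every} vertex of $\mathcal{R}(x)$ (as in the construction of $f_{\bar{s}}$ itself in Section \ref{olaila}), so that by construction every core is hit; then $\psi \circ \varphi$ visits every vertex of $\mathcal{R}(x)$ at least once and the hypothesis of Theorem \ref{huge} is satisfied verbatim. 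With that choice the argument becomes purely formal, and the only genuine input is Theorem \ref{huge} together with the idempotent-splitting lemma. I would present it in that streamlined form.
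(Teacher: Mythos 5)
Your proposal is correct and follows essentially the same route as the paper: both arguments come down to showing that the round-trip $f$-type endomorphism of $\theta_{\bar{s}}$ hits every core and therefore equals $f_{\bar{s}}$ by Theorem \ref{huge}, after which the standard idempotent-splitting argument gives the isomorphism $E_{\bar{s}}\cong E_{\bar{t}}$. The only cosmetic difference is that the paper reduces to the case where $\bar{s}$ and $\bar{t}$ differ by a single braid relation and sandwiches the braid morphism between $f_{\bar{t}}$ and $f_{\bar{s}}$ (equation (\ref{superette})), whereas you take $\varphi,\psi$ along paths visiting all of $\mathcal{R}(x)$ so that the hypothesis of Theorem \ref{huge} holds by construction.
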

\begin{proof}
 We need to find an isomorphism between $E_{\bar{s}}$ and  $E_{\bar{t}}$ for any two reduced expressions $\bar{s}$ and $\bar{t}$ of the element $x\in W$.  As  we can pass from any reduced expression of $x$ to any other one by a sequence of braid relations, it is enough to find an isomorphism between $E_{\bar{s}}$ and  $E_{\bar{t}}$ when $\bar{s}$ and $\bar{t}$ differ  only by one braid relation. Let us call the associated morphism $F_{\bar{s},\bar{t}} : \theta_{\bar{s}}\rightarrow \theta_{\bar{t}}.$\label{K20} We define $i_{\bar{s}} : E_{\bar{s}} \rightarrow \theta_{\bar{s}}$, the natural inclusion. We define the projection $f'_{\bar{t}} : \theta_{\bar{t}} \rightarrow E_{\bar{t}}$ to be the same as $f_{\bar{t}}: \theta_{\bar{t}} \rightarrow \theta_{\bar{t}}$, up to the fact that we restrict the target. We define $a_{\bar{s},\bar{t}}$\label{K21} as follows :

$$\xymatrix{
{\theta_{\bar{s}}}\ar[r]^{F_{\bar{s},\bar{t}}}&{\theta_{\bar{t}}}\ar[d]^{f'_{\bar{t}}}\\
{E_{\bar{s}}}\ar[u]^{i_{\bar{s}}}\ar[r]_{a_{\bar{s},\bar{t}}}&{E_{\bar{t}}}
}
$$
To finish the proof of theorem \ref{cototisimo} we only need to prove 
$$
 a_{\bar{t},\bar{s}} \circ a_{\bar{s},\bar{t}} =\mathrm{id}_{E_{\bar{s}}},
$$
 so  we only need to prove 
\begin{equation}\label{superette}
 f_{\bar{s}} \circ  F_{\bar{t},\bar{s}}\circ f_{\bar{t}} \circ F_{\bar{s},\bar{t}} \circ  f_{\bar{s}}=f_{\bar{s}},
\end{equation}
but this is a direct consequence of theorem \ref{huge} and the definition of $f_{\bar{s}}$. \end{proof}

\begin{notation}
 If $\bar{s}$ is a reduced expression of $w\in W$ we define $E_w=E_{\bar{s}},$ and this is a well-defined bimodule modulo isomorphism. 
\end{notation}

\subsection{}
We recall that $\eta : \langle \mathbf{B}\rangle
\rightarrow \mathcal{H}$ is the inverse of  $\varepsilon.$

\begin{lemma}\label{ochoporocho}
Let $M$ be a Soergel bimodule and let the polynomials $p_w\in \mathbb{Z}[v,v^{-1}]$ be defined by $\eta(\langle M\rangle )=\sum_{w\in W}p_wT_w.$ We have the following formula : $$K\otimes_R M\cong \bigoplus_{w\in W}p_w(1) K_w, $$
where $K$ is the fraction field of $R$.  
\end{lemma}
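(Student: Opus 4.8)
The plan is to exploit the structure of $K \otimes_R -$ as a functor that kills most of the graded detail and records only ranks over the field $K$. First I would recall the known behaviour of the ``standard'' bimodules under extension of scalars to $K$: for each $w \in W$, one has $K \otimes_R \theta_{s_1} \otimes_{R^{s_1}} \cdots \otimes_{R^{s_n}} R$, for a reduced expression $(s_1,\dots,s_n)$ of $w$, decomposing as a direct sum of copies of the $K$-bimodules $K_y$ (the bimodule $K$ with left action twisted by $y$ and right action standard), with the multiplicities governed precisely by the expansion of $\langle \theta_{s_1}\rangle \cdots \langle \theta_{s_n}\rangle = \varepsilon(T_{s_1}+1)\cdots\varepsilon(T_{s_n}+1)$ in the $T$-basis, after the substitution $v \mapsto 1$. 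This is Soergel's computation (in \cite{S3}): the $K_y$ are pairwise non-isomorphic simple $K$-bimodules, every Soergel bimodule becomes a finite direct sum of them over $K$, and the assignment $M \mapsto (\text{multiplicity of } K_y \text{ in } K\otimes_R M)_y$ is additive on direct sums, hence factors through $\langle \mathbf{B}\rangle$.

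Next I would set up the bookkeeping. Define, for a Soergel bimodule $M$, the tuple $m_y(M) := \dim_K \mathrm{Hom}_{K\text{-bimod}}(K_y, K\otimes_R M)$ — equivalently the multiplicity of $K_y$ as a summand. Since $K \otimes_R (-)$ is exact and additive and sends $M' \oplus M''$ to $(K\otimes_R M')\oplus(K\otimes_R M'')$, the function $M \mapsto \sum_y m_y(M)\,[K_y]$ descends to a group homomorphism $\mu : \langle \mathbf{B}\rangle \to \bigoplus_{y\in W}\mathbb{Z}[K_y]$. I would then check the statement on generators of $\langle \mathbf{B}\rangle$: it suffices to verify $\mu(\langle R(1)\rangle) = \mu(\langle R\rangle)$ (i.e. the grading shift is invisible over $K$, which is immediate since $K\otimes_R R(1) \cong K(1) \cong K = K_e$ as an ungraded bimodule) and $\mu(\langle \theta_s\rangle) = [K_e] + [K_s]$, which is the elementary fact that $K\otimes_R (R\otimes_{R^s}R) \cong K \oplus {}^sK$ as $K$-bimodules (coming from $K = K^s \oplus x_s K^s$ and the two characters of the order-two action). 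Comparing with $\varepsilon(v) = \langle R(1)\rangle$, $\varepsilon(T_s+1) = \langle\theta_s\rangle$ and $q = v^{-2} \mapsto 1$, one sees that $\mu$ corresponds, under $\eta$, exactly to the ring map $\mathcal{H} \to \bigoplus_y \mathbb{Z}[K_y]$, $v \mapsto 1$, $T_w \mapsto [K_w]$. Hence if $\eta(\langle M\rangle) = \sum_w p_w T_w$, then $\mu(\langle M\rangle) = \sum_w p_w(1)[K_w]$, which is the claimed formula $K\otimes_R M \cong \bigoplus_w p_w(1)\,K_w$.

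The one genuine subtlety — and the step I expect to be the main obstacle to write cleanly — is justifying that $K \otimes_R (-)$ really does send an arbitrary Soergel bimodule to a \emph{direct sum} of the $K_y$ (not merely something with a filtration by them), so that the multiplicities $m_y$ are well defined and additive, and that no $K_y$ with $y \notin W$ can appear. This rests on: $K$ being a field makes every $K$-bimodule over which $R$ acts through $K$ semisimple with simple objects exactly the $K_y$, $y \in W$ (an $R\otimes_k R$-module structure factoring through $K\otimes_k K$, localized); and the fact that tensor products over $R$ of the $\theta_s$ stay within this semisimple category after applying $K\otimes_R(-)$, which uses that $K\otimes_R\theta_s\otimes_R K \cong K\oplus {}^sK$ and compatibility of the twists under composition (the character $y$ multiplies). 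I would cite \cite[\S5]{S3} for this structural input rather than reprove it, and then the rest is the formal additivity argument above. I should also remark that $p_w(1) \geq 0$ automatically, consistent with $p_w \in \mathbb{Z}[v,v^{-1}]$ having, in the cases of interest here, nonnegative coefficients.
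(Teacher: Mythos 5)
Your proposal is correct and follows essentially the same route as the paper: the key inputs in both are $K\otimes_R\theta_s\cong K\oplus K_s$, the multiplicativity $K_x\otimes_R K_y\cong K_{xy}$ matching the specialization of $\mathcal{H}$ at $q=1$ to the group algebra, and additivity to pass from Bott--Samelson bimodules to their direct summands via Soergel's structural results. The only difference is organizational (you package the argument as a homomorphism $\mu$ on $\langle\mathbf{B}\rangle$ checked on ring generators, whereas the paper computes directly on $\theta_{s_1}\cdots\theta_{s_n}$ and then extends), which changes nothing of substance.
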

\begin{proof}
As $(V,V)$ is a good couple (bonne paire), by  \cite[formula (3.6)]{lib} we have that $K\otimes_R \theta_s \cong K\oplus K_s. $ If $(s_1,\ldots, s_n)$ is a sequence of elements of $\mathcal{S}$ we have that $$K\otimes_R \theta_{s_1}\cdots \theta_{s_n}\cong(K\oplus K_{s_1})\otimes_R\cdots \otimes_R(K\oplus K_{s_n}).$$ 

By definition of $\varepsilon$, we have $\eta(\langle \theta_{s_1}\cdots \theta_{s_n}\rangle)=(1+T_{s_1})\cdots (1+T_{s_n}).$ To specialise this element in $q=1$ is equivalent to calculate $(1+{s_1})\cdots (1+{s_n})$ in the group algebra $k[W]$ (identifying ${s_i}$ with $T_{s_i}$), so the fact that  $K_x\otimes_R K_y\cong K_{xy}$ for all $x,y\in W, $ allows us to finish the proof of the lemma for $M=\theta_{s_1}\cdots \theta_{s_n}.$ 

It is trivial to extend this result to finite direct sums of bimodules of the form $\theta_{s_1}\cdots \theta_{s_n}$, and for the direct summands it is enough to use the characterisation of Soergel bimodules given in \cite[lemma 5.13]{S3}. \end{proof}

\begin{notation}
 In \cite{S3} Soergel classifies the indecomposable bimodules in \textbf{B} and for each $x\in W$ he defines an indecomposable bimodule $B_x$ satisfying some support properties. We define $B'_x=B_x(-l(x)).$ For our pourposes we only need to know that  $B'_x$ is indecomposable and that it is a direct summand of every product $\theta_{s_1}\cdots \theta_{s_n}$ if $s_1\cdots s_n=x.$  
\end{notation}

\begin{proposition}\label{etru}
 The indecomposable Soergel bimodule $B'_w$ is a direct summand of $E_w$ and the set $\{\eta(\langle E_w\rangle)\}_{w\in W}$ is a basis of the Hecke algebra.
\end{proposition}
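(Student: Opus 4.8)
The statement has two parts, and the first feeds into the second. First I would show that $B'_w$ is a direct summand of $E_w$. Recall that $f_{\bar s}$ is built by composing $f$-type morphisms as we run through all reduced expressions of $w$; in particular $f_{\bar s}$ restricted to the "diagonal" copy $\theta_w \subseteq \theta_{\bar s}$ (the identity part, before applying any $f_{sr}$) acts by the identity on the generator $1^{\otimes n}$, since each $f_{sr}$ sends $1\otimes\cdots\otimes 1$ to $1\otimes\cdots\otimes 1$ plus terms in $R_+ X_{rs}$ (this is the computation $\Psi\circ\Phi(1\otimes\cdots\otimes 1)=\partial_{w_0}(d)\otimes 1\otimes\cdots$ in Proposition \ref{calculo}, suitably normalized). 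Hence the idempotent $f_{\bar s}$ is nonzero on the "top" of $\theta_{\bar s}$, i.e.\ on the indecomposable summand $B'_w$ which by Soergel's support/degree arguments is the unique summand of $\theta_{\bar s}$ that "sees" the element $w$. Concretely: $B'_w$ is a direct summand of $\theta_{\bar s}$, and the corresponding primitive idempotent $e_w\in\mathrm{End}(\theta_{\bar s})$ projecting onto $B'_w$ is killed by no nonzero idempotent that acts as the identity on the generator $1^{\otimes n}$ modulo $R_+$; since $f_{\bar s}$ has this property, $e_w$ factors through $f_{\bar s}(\theta_{\bar s})=E_{\bar s}=E_w$, giving $B'_w \mid E_w$.

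**The basis statement.** For the second part, the strategy is upper-triangularity of $\{\eta(\langle E_w\rangle)\}$ against the standard basis $\{T_x\}_{x\in W}$ of $\mathcal H$. Write $\eta(\langle E_w\rangle)=\sum_{x\in W} p^w_x T_x$ with $p^w_x\in\mathbb Z[v,v^{-1}]$. The plan is to show (i) $p^w_w \neq 0$ and in fact $p^w_w$ is a unit in $\mathbb Z[v,v^{-1}]$ (it will equal $1$, or a monomial $v^{k}$, by a degree/support argument), and (ii) $p^w_x=0$ unless $x\leq w$ in the Bruhat order. Both follow from Lemma \ref{ochoporocho} combined with the support properties of Soergel bimodules. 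Indeed, specializing $v=1$ in Lemma \ref{ochoporocho} gives $K\otimes_R E_w \cong \bigoplus_x p^w_x(1)\,K_x$; since $E_w$ is a direct summand of $\theta_{\bar s}$ whose "standard filtration" (in Soergel's sense, Hom into the $\Delta$-objects $R_x$) is supported on $\{x : x\leq w\}$, we get $p^w_x=0$ for $x\not\leq w$, and the fact that $B'_w \mid E_w$ together with $B'_w$ having $p_w=1$ in its expansion forces $p^w_w$ to be a nonzero element of $\mathbb{N}[v,v^{-1}]$ with constant term coming from a single copy of $R_w$; positivity (Soergel) plus the graded local ring structure of $\mathrm{End}(\theta_{\bar s})$ pin it to a monomial, hence a unit. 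Therefore the transition matrix from $\{T_x\}$ to $\{\eta(\langle E_w\rangle)\}$ is upper-triangular with unit diagonal entries over $\mathbb Z[v,v^{-1}]$, so it is invertible and $\{\eta(\langle E_w\rangle)\}_{w\in W}$ is a $\mathbb Z[v,v^{-1}]$-basis of $\mathcal H$.

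**Main obstacle.** The delicate point is controlling the diagonal coefficient $p^w_w$ — i.e.\ showing it is a unit, not merely nonzero. The clean way is to invoke Soergel's classification: $E_w$, being a summand of $\theta_{\bar s}$, decomposes as $B'_w$ plus summands $B'_x$ with $x<w$ (up to shifts), so $\eta(\langle E_w\rangle) = \eta(\langle B'_w\rangle) + \sum_{x<w}(\text{shifts of }\eta(\langle B'_x\rangle))$; since $\eta(\langle B'_x\rangle)\in \tilde T_x + \sum_{y<x} \mathbb Z[v,v^{-1}]\tilde T_y$ by Soergel's theorem (the decategorified statement underlying Conjecture \ref{cs}, which holds unconditionally at the level of the "self-dual upper-triangular" characterization Soergel proves without the full conjecture), the coefficient of $\tilde T_w$ in $\eta(\langle E_w\rangle)$ is exactly $1$. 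This bypasses any direct computation with the idempotent $f_{\bar s}$. The positivity assertion in the abstract is then immediate: each $\langle E_w\rangle$ is an honest object of $\mathbf B$, so by Soergel's theorem $\eta(\langle E_w\rangle)$ is a $\mathbb N[v,v^{-1}]$-linear combination of the $\tilde T_x$, hence a fortiori of the $T_x$ after clearing the $v^{l(x)}$ factors.
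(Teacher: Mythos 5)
Your overall strategy is right, and your treatment of the basis statement is essentially the paper's: positivity of the coefficients via Soergel's Corollary \ref{inv}, support in $\{x\le w\}$, and a unit diagonal entry, giving an invertible triangular transition matrix. The only difference there is how the diagonal coefficient is pinned down: you extract it from the decomposition of $E_w$ into indecomposables together with Soergel's unconditional triangularity $\eta(\langle B'_x\rangle)\in \tilde{T}_x+\sum_{y<x}\mathbb{Z}[v,v^{-1}]\tilde{T}_y$, while the paper reads it off from the multiplicity of $K_w$ in $K\otimes_R E_w$ (Lemma \ref{ochoporocho}). Both are legitimate; the paper's version is economical because the same $K_w$-multiplicity computation is what it uses to prove $B'_w\mid E_w$ in the first place.

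For the first half your route is genuinely different from the paper's and has one step that is asserted rather than proved. The paper passes to the fraction field: it shows $K_w$ is a direct summand of $K\otimes_R E_w$ (by tracking $K_z$ through the factorization of $f_{sr}$ through $B'_z$), that $K_w$ has multiplicity one in $K\otimes_R\theta_{\bar{s}}$ and is a summand of $K\otimes_R B'_w$ (using $R_w\subseteq B'_w$), and concludes by Krull--Schmidt. You instead work with the degree-zero fiber: $f_{\bar{s}}$ fixes the line spanned by $1\otimes\cdots\otimes 1$, and $B'_w$ "sees" that line. The gap is the identification of $B'_w$ as \emph{the unique} indecomposable summand of $\theta_{\bar{s}}$ with nonzero degree-zero part. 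This is true but not a tautology: it amounts to knowing that in a Krull--Schmidt decomposition of $\theta_{\bar{s}}$ the summand $B'_w$ occurs unshifted with $(B'_w)_0\neq 0$ (which again needs $R_w\subseteq B'_w$) while every other summand occurs with a strictly negative shift; your appeal to ``Soergel's support/degree arguments'' also conflates support at $w$ (the paper's notion, detected by $K_w$) with the degree-zero fiber (your notion), and the sentence about $e_w$ being ``killed by no nonzero idempotent'' does not by itself yield $B'_w\mid \mathrm{Im}(f_{\bar{s}})$ --- one still needs the Krull--Schmidt step applied to $\theta_{\bar{s}}=\mathrm{Im}(f_{\bar{s}})\oplus\ker(f_{\bar{s}})$. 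Two smaller points: Proposition \ref{calculo} gives $f_{sr}(1\otimes\cdots\otimes 1)$ only as a \emph{nonzero scalar multiple} of $1\otimes\cdots\otimes 1$ (namely $\partial_{w_0}(d)\otimes 1\otimes\cdots$), so that $f_{\bar{s}}$ acts by $1$ and not just by a nonzero scalar on this line should be deduced from idempotency; and the Bruhat-support claim $p^w_x=0$ for $x\not\le w$ needs the positivity of the coefficients of both $E_w$ and its complement in $\theta_{\bar{s}}$, not just of $E_w$.
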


\begin{proof} Let us recall that $K$ be the fraction field of $R$. Let $s,r\in \mathcal{S}$ and $ z=\underbrace{srs\cdots}_{m(s,r)\ \mathrm{times}}$. In this proof we will use the following notation : if $M$ is a Soergel bimodule, let $\overline{M}=K\otimes_R M$ be the corresponding $(K,K)$-bimodule (see \cite[lemma 3.4]{lib}). We can find a projection $\pi:X_{sr}\rightarrow B'_z$ and an injection $in: B'_z\rightarrow X_{rs}$ such that  $in\circ \pi=f_{sr}\in \mathrm{Hom}(X_{sr},X_{rs}).$

By lemma \ref{ochoporocho} we know that $K_z$ is a direct summand of $\overline{X_{sr}}, \overline{X_{rs}}$, and appears in both bimodules with multiplicity one. To see that $K_z$ is a direct summand of $\overline{B'_z}$ we also need the known fact that  Soergel's conjecture is true for dihedral groups, so $\varepsilon(v^{-n}C'_z)=\langle B'_z\rangle$ and $C'_z=v^n(\sum_{y\leq z}T_y)$ (see \cite[remark 4.4]{S3}). So we have an injection $j: K_z\hookrightarrow \overline{X_{sr}}.$ As $\mathrm{id}_K\otimes_R \pi :\overline{X_{sr}}\twoheadrightarrow \overline{B'_z}$ is a surjection and the space
$$\mathrm{Hom}_{(K,K)}(K_u,K_v)\simeq \begin{cases}
                                 \{0\}\ \mathrm{if\ } u\neq v\\
K\ \mathrm{if\ } u= v                                \end{cases}
 $$
we deduce an injection $(\mathrm{id}_K\otimes_R \pi)\circ j:K_z\hookrightarrow \overline{B'_z},$ and by composing with the injection $\mathrm{id}_K\otimes_R in$ (which is an injection because $K$ is flat over $R$), we finally  obtain an injection $ \overline{f_{sr}}\circ j : K_z \hookrightarrow \overline{X_{rs}}.$ This allows us to conclude that $K_z$ is a direct summand of $\overline{f_{sr}}(\overline{X_{sr}})=\overline{f_{sr}(X_{sr}})$. As $K_u\otimes_R K_v\cong K_{uv}$ for all $u,v\in W,$ we conclude that $K_u$ is a direct summand of $\overline{E_u},$ for all $u\in W$.


Let $\bar{s}$ be a reduced expression of $w$.
We have that $\theta_{\bar{s}}=E_w\oplus Y=B'_w\oplus X$, for some Soergel's bimodules $X,Y$, so $$\overline{\theta_{\bar{s}}}=\overline{E_w}\oplus \overline{Y}=\overline{B'_w}\oplus \overline{X} $$

By Krull-Schmidt (see \cite[remark 1.3]{S3}), $B'_w$ is either direct summand of $E_w$ or of $Y$ (recall that $B'_w$ is indecomposable). By \cite[satz 6.16]{S3} we have that $R_w$ is a submodule of $B'_w$, and as $K$ is flat over $R$, then $K_w$ is a direct summand of $\overline{B'_w}$. We have seen that $K_w$ is a direct summand of $\overline{E_w}$ and of $\overline{\theta_{\bar{s}}}$, but it has multiplicity one in $\overline{\theta_{\bar{s}}}$ so $B'_w$ is a direct summand of $E'_w$, thus proving the first part of the proposition.

By a theorem of Soergel (recalled in this paper in corollary \ref{inv} of section \ref{5}), if 
$$\eta(\langle E_w\rangle)=\sum_{v}p_vT_v,$$ then $p_v\in \mathbb{N}[v,v^{-1}]$.  As $K_w$ appears with multiplicity one in $\overline{E_w}$ we conclude that $p_w=1$, so $\eta(\langle E_w\rangle)=T_w +\sum_{v<w}p_vT_v$. This triangularity condition allows us to conclude the second part of the proposition.\end{proof}


\subsection{}If we choose for every element $x\in W$ a reduced expression $(s_1^x,\cdots,s_n^x) $ of $x$, then the following set is a basis of the Hecke algebra :
$$\underline{Y}=\{C'_{s_1^x}\cdots C'_{s_n^x} \}_{x\in W} $$

Proposition \ref{etru} makes precise the assertion that  the basis $\underline{A}=\{\eta(\langle E_w\rangle)\}_{w\in W}$ is in between the Kazhdan-Lusztig basis and the $\underline{Y}$ basis (Bott-Samelson-Demazure basis). We finish this section with an example that shows that $\underline{A}$ is different from both bases.

\textbf{Example : } Let $m(s,r)=4$. By definition $E_{srsr}$ is the image of $f_{rs}\circ f_{sr},$ and this is the indecomposable Soergel bimodule $B'_{srsr},$ so we conclude that  $\eta(\langle E_{srsr}\rangle)=v^{-4}C'_{srsr}.$
On the other hand, $E_{srs}$ is by definition the bimodule $\theta_s\theta_r\theta_s$, and so $\eta(\langle E_{srs}\rangle)=v^{-3}C'_sC'_rC'_s.$

In the next section we will get closer to the indecomposables considering some new morphisms.

\section{The bimodule $D_w$}\label{5}

In this section we will construct a bimodule $D_w$ in Soergel's category of bimodules, and we will study some of its properties. We start by recalling some of Soergel's results.

\subsection{}

\textbf{Notation.} Given a 
$\mathbb{Z}$-graded vector space  $V=\bigoplus_i V_i$, with dim$(V)<\infty$, we define its graded dimension by the formula
$$ \underline{\mathrm{dim}}V =\sum (\mathrm{dim} V_i)v^{-i}\in \mathbb{Z}[v,v^{-1}].$$

Let us recall that $R_+$ is the ideal of $R$ generated by the homogeneous elements of non zero degree. We define the graded rank of a finitely generated $\mathbb{Z}$-graded $R$-module 

$$\underline{\mathrm{rk}}M = \underline{\mathrm{dim}}(M/MR_+)\in \mathbb{Z}[v,v^{-1}]. $$

We have $\underline{\mathrm{dim}}(V(1))=v(\underline{\mathrm{dim}}V)$ and
$\underline{\mathrm{rk}}(M(1))=v(\underline{\mathrm{rk}}M).$ We define 
$\underline{\overline{\mathrm{rk}}}M$ as the image of $\underline{\mathrm{rk}}M$ under
 $v\mapsto v^{-1}$.

For $x\in W$, we define the $(R,R)-$bimodule $R_x$ as  the set $R$ with the usual left action but with the right action twisted by $x$  (in formulas : $r\cdot r'=r x(r')$ for $r\in R_x$ and $r'\in R$).

Let us recall that there exists a unique involution $d $ of the Hecke algebra with $d(v) = v^{-1}$ and $ d(T_x ) = (T_{x^{-1}})^{-1}$.
An immediate corollary of proposition 5.7, proposition 5.9 and corollary 5.16 of \cite{S3} is 

\begin{corollary}[Soergel]\label{inv}We have the following equations in the Hecke algebra\begin{displaymath}
\begin{array}{lll}
 \eta(\langle B \rangle)&=& \sum_{x\in W}
 \underline{\overline{\mathrm{rk}}}\mathrm{Hom}(B, R_x)T_x  \\
  &=& d\circ \sum_{x\in W}
 \underline{\overline{\mathrm{rk}}}\mathrm{Hom}( R_x, B)T_x
\end{array}
\end{displaymath}
\end{corollary}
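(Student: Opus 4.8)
The plan is to recover this statement in two moves: prove the first displayed equality, then deduce the second from it by a duality argument. The first equality is essentially Soergel's character formula; I would show that the assignment $c(\langle M\rangle)=\sum_{x\in W}\underline{\overline{\mathrm{rk}}}\,\mathrm{Hom}(M,R_x)\,T_x$ equals the map $\eta$. That $c$ is well defined on $\langle\mathbf{B}\rangle$ uses Soergel's fact that, for $M\in\mathbf{B}$, the bimodule $\mathrm{Hom}_{(R,R)}(M,R_x)$ is finitely generated graded free as a right $R$-module: then $\underline{\overline{\mathrm{rk}}}\,\mathrm{Hom}(M,R_x)$ is additive along direct sums and is multiplied by $v$ under the shift $M(1)$, so $c$ descends to a $\mathbb{Z}[v,v^{-1}]$-linear map $\langle\mathbf{B}\rangle\to\mathcal{H}$. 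To see $c=\eta$ it suffices to check agreement on the classes $\langle\theta_{s_1}\cdots\theta_{s_n}\rangle$ of Bott--Samelson bimodules, since these span $\langle\mathbf{B}\rangle$ over $\mathbb{Z}[v,v^{-1}]$ (they are unitriangular, for the Bruhat order, to the basis of classes of indecomposables). For these one argues by induction on $n$: the adjunction between $\theta_s\otimes_R(-)$ and restriction of scalars along $R^s\hookrightarrow R$ gives $\mathrm{Hom}_{(R,R)}(\theta_s\otimes_R N,R_x)\cong\mathrm{Hom}_{(R^s,R)}(N,R_x)$ (up to a grading shift), and unwinding graded ranks turns this into exactly the recursion for left multiplication by $1+T_s$ in $\mathcal{H}$; the base case is $\mathrm{Hom}(R,R_x)=\delta_{x,e}\,R$. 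This is the content of \cite[Prop.~5.7]{S3}, which I would just cite.

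For the second equality, let $\mathbb{D}=\mathrm{Hom}_{-R}(-,R)$ be the graded right-$R$-dual, a contravariant functor with $\mathbb{D}^2\cong\mathrm{id}$ on the (larger) category of graded $(R,R)$-bimodules that are finitely generated free as right $R$-modules; it restricts to a contravariant self-equivalence of $\mathbf{B}$, and a direct computation gives $\mathbb{D}R_x\cong R_{x^{-1}}$ with no grading shift (since $R_x$ is generated in degree $0$ on both sides), while $\mathbb{D}\theta_s\cong\theta_s(2)$. For $M\in\mathbf{B}$ one then has graded isomorphisms $\mathrm{Hom}_{(R,R)}(R_x,M)\cong\mathrm{Hom}_{(R,R)}(\mathbb{D}M,\mathbb{D}R_x)\cong\mathrm{Hom}_{(R,R)}(\mathbb{D}M,R_{x^{-1}})$, so, letting $\tau$ be the $\mathbb{Z}[v,v^{-1}]$-linear anti-automorphism of $\mathcal{H}$ with $\tau(T_y)=T_{y^{-1}}$,
\begin{align*}
\sum_{x\in W}\underline{\overline{\mathrm{rk}}}\,\mathrm{Hom}(R_x,M)\,T_x
&=\sum_{y\in W}\underline{\overline{\mathrm{rk}}}\,\mathrm{Hom}(\mathbb{D}M,R_y)\,T_{y^{-1}}\\
&=\tau\Bigl(\sum_{y\in W}\underline{\overline{\mathrm{rk}}}\,\mathrm{Hom}(\mathbb{D}M,R_y)\,T_{y}\Bigr)
=\tau\bigl(\eta(\langle\mathbb{D}M\rangle)\bigr),
\end{align*}
the last equality being the first part applied to $\mathbb{D}M$. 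Finally, contravariance and $\otimes_R$-compatibility of $\mathbb{D}$ make $\langle M\rangle\mapsto\eta(\langle\mathbb{D}M\rangle)$ an anti-automorphism of $\mathcal{H}$, and checking it on $\langle R(1)\rangle$ and $\langle\theta_s\rangle$ (using $\mathbb{D}\theta_s\cong\theta_s(2)$ and $d(T_s)=T_s^{-1}=v^2T_s+v^2-1$) identifies it with $\tau\circ d=d\circ\tau$; this identification is \cite[Prop.~5.9 and Cor.~5.16]{S3}. Substituting gives $\sum_x\underline{\overline{\mathrm{rk}}}\,\mathrm{Hom}(R_x,M)\,T_x=\tau\bigl(\tau(d(\eta(\langle M\rangle)))\bigr)=d\bigl(\eta(\langle M\rangle)\bigr)$, and applying the involution $d$ yields the second formula.

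There is no single hard step: the real work is already packaged in Soergel's \cite[Prop.~5.7]{S3} (the Bott--Samelson recursion matching Hecke multiplication), which one is entitled to quote. What must be handled with care is the compatibility of all the normalizations --- $\underline{\mathrm{rk}}$ versus $\underline{\overline{\mathrm{rk}}}$, the direction of the involution $d$, the reindexing $x\leftrightarrow x^{-1}$ coming from $\mathbb{D}R_x\cong R_{x^{-1}}$, and, most delicately, the fact that $\mathbb{D}$ introduces \emph{no} grading shift on $R_x$ but a shift by $2$ on $\theta_s$; it is precisely these two features that force the appearance of $d$ (rather than a bare reindexing) in the second formula.
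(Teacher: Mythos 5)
Your argument is correct and follows exactly the route the paper intends: the paper gives no proof at all, stating only that the corollary is immediate from Propositions 5.7, 5.9 and Corollary 5.16 of \cite{S3}, which are precisely the results you invoke (and whose combination you spell out, with the normalizations of $\underline{\overline{\mathrm{rk}}}$, the duality $\mathbb{D}$, and the involution $d$ checked consistently).
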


\subsection{Definition of $f_{sr}^2(n)$}

In this subsection we prove that there exist a morphism generalizing $f_{rs}\circ f_{sr}$. To be more precise we need a definition.

\begin{definition}
 For $s,r\in\mathcal{S}$ and $1\leq n\leq m(s,r)$ we define $sr(n)=\underbrace{srs\cdots}_{n}$ and $$\theta_{sr}^l(n)=\underbrace{\theta_s\theta_r\theta_s\cdots}_{n}, \ \ \theta_{sr}^r(n)=\underbrace{\cdots \theta_s\theta_r\theta_s}_{n}$$ 
\end{definition}
\begin{proposition}\label{lumpen}
 For $s,r\in\mathcal{S}$ and $1\leq n\leq m(s,r)$ there exists a unique degree zero idempotent $f_{sr}^2(n)$ in End($\theta_{sr}^l(n)$) that factors through $B'_{sr(n)}$.
\end{proposition}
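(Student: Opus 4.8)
The plan is to establish existence and uniqueness separately, and in both cases to reduce the question to the dihedral Coxeter system $\langle s,r\rangle$ with $m(s,r)=m$, since $\theta_{sr}^l(n)$ and $B'_{sr(n)}$ only involve $s$ and $r$. For uniqueness, I would argue as follows: any two degree zero idempotents $e_1,e_2\in\mathrm{End}(\theta_{sr}^l(n))$ factoring through $B'_{sr(n)}$ have the same image (up to the choice of splitting), namely a direct summand isomorphic to $B'_{sr(n)}$; since $B'_{sr(n)}$ is indecomposable (by the notation recalled before Proposition \ref{etru}) and appears with multiplicity one in $\theta_{sr}^l(n)$ — this multiplicity one fact follows from Lemma \ref{ochoporocho}, because specializing at $q=1$ the element $(1+T_s)(1+T_r)\cdots$ of length $n$ gives a coefficient $1$ in front of $T_{sr(n)}$, matching the fact that $K_{sr(n)}$ occurs once in $K\otimes_R\theta_{sr}^l(n)$ — the complementary summand is uniquely determined. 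Then $e_1$ and $e_2$ both project onto this canonical summand along the canonical complement, hence $e_1=e_2$. The only subtlety is to check that ``factors through $B'_{sr(n)}$'' together with being idempotent pins down both the image and the kernel, not just the image; this uses that $\mathrm{Hom}(B'_{sr(n)},B'_{sr(n)})$ in degree zero is one-dimensional (spanned by the identity), which again follows from indecomposability plus the graded-local structure of endomorphism rings of Soergel bimodules.

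For existence, I would give an explicit construction mimicking the formula for $f_{sr}$ from Section \ref{olb}, but working in the dihedral group $W'=\langle s,r\rangle$ with $m'=m(s,r)$. When $n=m$, the idempotent $f_{rs}\circ f_{sr}\in\mathrm{End}(\theta_{sr}^l(m))$ already does the job — it factors through $B'_{sr(m)}$ as discussed in the introduction and in the proof of Proposition \ref{etru} — so I only need $n<m$. Here the idea is to pass to an \emph{auxiliary} Soergel category $\mathbf{B}'$ attached to a dihedral Coxeter system with $m(s,r)=n$ exactly: in that category the morphism $f_{sr}^{\mathbf{B}'}\in\mathrm{Hom}(\theta_s\theta_r\cdots,\theta_r\theta_s\cdots)$ (both of length $n$) exists and is a genuine degree zero bimodule morphism, and $f_{rs}^{\mathbf{B}'}\circ f_{sr}^{\mathbf{B}'}$ is a degree zero idempotent in $\mathrm{End}_{\mathbf{B}'}(\theta_{sr}^l(n))$ factoring through the indecomposable $B'_{sr(n)}$ of $\mathbf{B}'$. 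The formulas of Proposition \ref{calculo} express such morphisms via the Demazure operators $\partial_w$ and a symmetrising form; by writing these formulas out and transporting them back to the original ring $R$ (where $sr(n)$ is still a reduced word, so $\theta_{sr}^l(n)$ makes sense), one obtains a well-defined degree zero \emph{map} on the underlying modules. One then checks it is $R$-bilinear — this is where denominators enter, exactly the invertibility-of-$|W'|$ hypothesis used in Section \ref{olb} — and idempotent, and that its image is a summand isomorphic to $B'_{sr(n)}$ using the multiplicity-one argument above together with the flat base change $K\otimes_R(-)$ as in Lemma \ref{ochoporocho} to detect that $K_{sr(n)}$ lies in the image.

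I expect the main obstacle to be verifying that the transported formula genuinely defines a morphism of $R$-bimodules (not merely of the subring of $\langle s,r\rangle$-invariants, or merely a $k$-linear map), and in particular identifying precisely the denominators that must be inverted so that the construction is valid over the field $k$; this is the point flagged in the introduction as the reason $f_{sr}^2(n)$ is not defined in all characteristics. A clean way to organize this is to first prove the statement over $\mathbb{Q}$ (or any field in which $|W'|$ is invertible) by the explicit formula, then observe that the resulting idempotent is characterized intrinsically — as \emph{the} degree zero idempotent factoring through $B'_{sr(n)}$, which we proved unique — so it is independent of all auxiliary choices; the existence over other fields, when it holds, follows by the same formula once one checks the denominators do not vanish.
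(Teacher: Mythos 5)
Your uniqueness argument has a genuine gap. Knowing that $B'_{sr(n)}$ occurs with multiplicity one in $\theta_{sr}^l(n)$ does \emph{not} determine the idempotent: write $\theta_{sr}^l(n)=B'_{sr(n)}\oplus X$; if there exists a nonzero degree zero morphism $\phi:B'_{sr(n)}\to X$, then $\{(b,\phi(b))\}$ is a second complement of $X$, isomorphic to $B'_{sr(n)}$, and the projection onto it along $X$ is a second degree zero idempotent factoring through $B'_{sr(n)}$, distinct from the first. So neither the image nor the kernel is canonical, and the fact that $\mathrm{End}^0(B'_{sr(n)})=k\cdot\mathrm{id}$ does not repair this. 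What is actually needed --- and what constitutes essentially the whole proof in the paper --- is that the \emph{degree zero} parts of $\mathrm{Hom}(\theta_{sr}^l(n),B'_{sr(n)})$ and $\mathrm{Hom}(B'_{sr(n)},\theta_{sr}^l(n))$ are each one-dimensional; then any idempotent factoring through $B'_{sr(n)}$ is $\lambda\mu\,\iota_0\circ\pi_0$ for the splitting maps $\pi_0,\iota_0$, and idempotency forces $\lambda\mu\in\{0,1\}$. The paper obtains this one-dimensionality by the adjunction of \cite[Lemme 3.3]{L}, Soergel's hom formula (Corollary \ref{inv}), and a degree estimate (Lemma \ref{Hecke}) showing that the coefficient of $q^{n}$ in $\tau\bigl(\underbrace{\cdots(1+T_s)(1+T_r)(1+T_s)}_{n}C'_{sr(n)}v^{-n}\bigr)$ equals $1$. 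Your proposal never establishes (or even isolates) this Hom-space computation, so uniqueness is not proved.

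Conversely, your existence argument is both harder than necessary and conditional. Existence is immediate from the fact, recalled in the notation preceding Proposition \ref{etru}, that $B'_{sr(n)}$ is a degree zero direct summand of $\theta_{s_1}\cdots\theta_{s_n}$ for any reduced expression, in particular of $\theta_{sr}^l(n)$: the composite $\iota_0\circ\pi_0$ of the splitting maps is the desired idempotent. The route you propose --- transporting the formula of Proposition \ref{calculo} from an auxiliary dihedral category with $m(s,r)=n$ and checking that the transported map is $R$-bilinear and idempotent --- is precisely the content of the \emph{conjectures} of Section \ref{utah} of the paper; basing existence on it would make the proposition conditional on statements the paper explicitly does not prove.
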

 
\begin{remark}
 The equation $f_{sr}^2(m(s,r))=f_{rs}\circ f_{sr}$ explains the choice of our notation.
\end{remark}

\begin{proof} To prove this proposition we only need to prove the following two assertions :

\begin{itemize}
\item the space of degree zero morphisms  in Hom($B'_{sr(n)}$\,,\,$\theta_{sr}^l(n)$) is one dimensional
\item  the space of degree zero morphisms in Hom($\theta_{sr}^l(n)$\,,\,$B'_{sr(n)}$ ) is one dimensional
\end{itemize}

Using the adjunction \cite[Lemme 3.3]{L}, this is equivalent to
\begin{enumerate}
\item the space of morphisms of degree $-2n$ in Hom($\theta_{sr}^r(n)B'_{sr(n)}$\,,\,$R$) is one dimensional
\item  the space of morphisms of degree $2n$ in Hom($R$\,,\,$\theta_{sr}^r(n)B'_{sr(n)}$ ) is one dimensional
\end{enumerate}

 We will prove the first claim. The second claim has a similar proof. 
Because of \cite[theorem 5.15]{S3}, we know that the Hom spaces between Soergel bimodules are free as right $R$-modules, so we have 
Hom($\theta_{sr}^r(n)B'_{sr(n)}$\,,\,$R$)$\cong \bigoplus_i n_i R(2i)$ for some integer numbers $n_i$. Let us see how to calculate these numbers. 
Let us define $$h_0=\underbrace{\cdots(1+T_s)(1+T_r)(1+T_s)}_{n\ \mathrm{terms}}C'_{sr(n)}v^{-n},$$
and let $\tau :\mathcal{H}\rightarrow \mathbb{Z} [v,v^{-1}]$ be the map defined by
$$\tau\left(\sum_{x\in W}p_xT_x\right) =p_1 \,\,\,\,\,\,\,\,\,\,\,\, (p_x\in \mathbb{Z} [v,v^{-1}]).$$

By corollary \ref{inv} we have the equations
\begin{displaymath}
\begin{array}{lll}
 \tau(h_0)&=& \tau \circ \eta \circ
 \varepsilon (h_0)  \\
  &=&\tau \circ \eta (\langle \theta_{sr}^r(n)B'_{sr(n)} \rangle)\\
  &=&\underline{\overline{\mathrm{rk}}}\mathrm{Hom} (\theta_{sr}^r(n)B'_{sr(n)},R)
 \\
 &=& \underline{\overline{\mathrm{rk}}} (\bigoplus_i n_i R(2i))\\
&=& \sum n_i v^{-2i} \underline{\overline{\mathrm{rk}}}R\\
&=& \sum n_i q^i.
\end{array}
\end{displaymath}

So our problem reduces to an easy problem in the Hecke algebra, namely, to prove that $\tau(h_0)=q^{n}+\sum_{i<n}n_iq^i.$
By \cite[remark 4.4]{S3} we have the formula $C'_{sr(n)}=v^n(\sum_{x\leq sr(n)}T_x)$ and by \cite[proposition 8.1.1]{GP} we know that 
\begin{equation} \tau(T_xT_{y^{-1}})=  \begin{cases} q^{l(x)}
 \text{ if } x=y \\
 0 \,\,\,\,\,\,\,\,\,\, \text{if} \,\,\, x \neq y
 \end{cases}\end{equation}
The following lemma  is easily proved by induction.
\begin{lemma}\label{Hecke} 
Let us define the polynomials $p_y\in \mathbb{Z}[q]$ by the formula $$ \underbrace{\cdots(1+T_s)(1+T_r)(1+T_s)}_{n\ \mathrm{terms}}=\sum p_yT_y. $$
If $ys<y$ then deg($p_y)\leq (n-l(y))/2$, if $ys>y$ then deg($p_y)\leq (n-l(y)-1)/2$ and  $p_{sr(n)}=1$.
\end{lemma}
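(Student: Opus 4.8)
The plan is to argue by induction on $n$. Let $\Pi^s_n$ denote the $n$-term alternating product $\underbrace{\cdots(1+T_s)(1+T_r)(1+T_s)}_{n\ \mathrm{terms}}$ (the product of the lemma), and $\Pi^r_n$ the analogous $n$-term product ending on the right in $(1+T_r)$. These two families are interchanged by the algebra automorphism $\sigma$ of $\mathcal H$ induced by the Coxeter automorphism of $\langle s,r\rangle$ that swaps $s$ and $r$, so the assertion of the lemma for $\Pi^r_n$ (with $s$ and $r$ exchanged everywhere) is equivalent to the assertion for $\Pi^s_n$. Since $\Pi^s_{n+1}=\Pi^r_n\,(1+T_s)$, it suffices to deduce the statement for $\Pi^s_{n+1}$ from the statement for $\Pi^s_n$, used in the form of its $\sigma$-image $\Pi^r_n$. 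The base case $n=1$ is immediate, as $\Pi^s_1=T_1+T_s$.

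For the inductive step write $\Pi^r_n=\sum_y q_yT_y$. Using the quadratic relation $T_s^2=q+(q-1)T_s$ together with $T_yT_s=T_{ys}$ when $ys>y$ and $T_yT_s=qT_{ys}+(q-1)T_y$ when $ys<y$, one expands $\Pi^s_{n+1}=\Pi^r_n+\Pi^r_nT_s$ and reads off the coefficient $p_w$ of $T_w$: one finds $p_w=q_w+q\,q_{ws}$ when $ws>w$ and $p_w=q\,q_w+q_{ws}$ when $ws<w$. Now invoke the elementary structure of the dihedral group $\langle s,r\rangle$: every element $y$ with $1\le l(y)\le m(s,r)-1$ has a unique (alternating) reduced word, which ends in exactly one of $s$ or $r$; hence in this length range $ys<y$ is equivalent to ``$y$ ends in $s$'', and the $s$-descent and $r$-descent conditions on $y$ are complementary. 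Feeding the inductive degree bounds for $q_w$ and $q_{ws}$ (which are phrased through the $r$-descent of the index) into the two formulas above and using $l(ws)=l(w)\pm 1$, a short case check establishes $\deg p_w\le (n+1-l(w))/2$ when $ws<w$ and $\deg p_w\le (n+1-l(w)-1)/2$ when $ws>w$: each factor of $q$ produced by the quadratic relation raises the degree by one, and this is absorbed exactly by the accompanying change in length. For the normalization, the leading element $z$ of $\Pi^s_{n+1}$ (the element denoted $sr(n+1)$ in the lemma) has $l(z)=n+1$; since every element occurring in $\Pi^r_n$ has length at most $n$ we get $q_z=0$, and as $z$ ends in $s$ we are in the case $zs<z$, whence $p_z=q_{zs}=1$ by induction, $zs$ being the leading element of $\Pi^r_n$.

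The one point that needs care — and, I expect, the actual content hidden behind ``easily'' — is the longest element $w_0=sr(m(s,r))$ of $\langle s,r\rangle$. When $l(w)=m(s,r)-1$ and $ws>w$ one has $ws=w_0$, and the naive inductive bound on $q_{w_0}$ exceeds what is needed by $1/2$; there is a mirror issue for $w=w_0$ itself in the case $ws<w$. This is exactly where the hypothesis $n\le m(s,r)$ is used — it is the range in which the lemma is actually applied, via $h_0$, in the proof of Proposition~\ref{lumpen} — because for $n+1\le m(s,r)$ the element $w_0$, of length $m(s,r)>n$, does not occur in $\Pi^r_n$ at all, so $q_{w_0}=0$ and the troublesome terms simply vanish. (One readily checks the bound genuinely fails when $n>m(s,r)$, so the restriction is needed.) Everything else is the routine descent-and-parity bookkeeping sketched above.
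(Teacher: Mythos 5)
Your induction is correct and is precisely the argument the paper omits (it only asserts the lemma is ``easily proved by induction''): the recursions $p_w=q_w+q\,q_{ws}$ for $ws>w$ and $p_w=q\,q_w+q_{ws}$ for $ws<w$, combined with the complementarity of the $s$- and $r$-descents for dihedral elements of length strictly between $0$ and $m(s,r)$, give exactly the stated bounds. You also correctly isolate the longest element $w_0$ as the one delicate case and dispose of it via the (implicit, but genuinely needed) hypothesis $n\le m(s,r)$, under which $q_{w_0}=0$; I see no gaps.
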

This lemma allows us to finish the proof of proposition \ref{lumpen}
\end{proof}

We recall corollary 4.2 of \cite{L} :

 \begin{lemma}\label{grados} Let $(s_1,\ldots , s_n)\in \mathcal{S}^n.$
We define the integers $m_i$ by $\tau((1+T_{s_1})\cdots
(1+T_{s_n})) =\sum_im_iq^i$. Then, there exists an isomorphism of graded right 
$R-$modules 
$$\mathrm{Hom}(\theta_{s_1}\cdots \theta_{s_n} ,R) \cong \bigoplus_i m_i R(2i). $$
\end{lemma}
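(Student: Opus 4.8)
The plan is to read off the isomorphism directly from Soergel's formula in Corollary \ref{inv}, applied to the Soergel bimodule $B=\theta_{s_1}\cdots\theta_{s_n}$ and to the trivial twist $R=R_1$, where $1$ denotes the identity of $W$.

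First I would compute the coefficient of $T_1$ in $\eta(\langle\theta_{s_1}\cdots\theta_{s_n}\rangle)$ in two ways. On the one hand, since $\varepsilon$ is a ring isomorphism with $\varepsilon(1+T_s)=\langle\theta_s\rangle$ and the product of $\langle\mathbf{B}\rangle$ is induced by $\otimes_R$, we have $\eta(\langle\theta_{s_1}\cdots\theta_{s_n}\rangle)=(1+T_{s_1})\cdots(1+T_{s_n})$ (this identity already appears in the proof of Lemma \ref{ochoporocho}), so applying $\tau$ gives $\sum_i m_iq^i$ by the very definition of the $m_i$. On the other hand, the first line of Corollary \ref{inv} says that this same coefficient of $T_1$ equals $\underline{\overline{\mathrm{rk}}}\,\mathrm{Hom}(\theta_{s_1}\cdots\theta_{s_n},R_1)=\underline{\overline{\mathrm{rk}}}\,\mathrm{Hom}(\theta_{s_1}\cdots\theta_{s_n},R)$. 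Hence
$$\underline{\overline{\mathrm{rk}}}\,\mathrm{Hom}(\theta_{s_1}\cdots\theta_{s_n},R)=\sum_i m_iq^i.$$

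Next I would pin down the module structure. By \cite[theorem 5.15]{S3} the right $R$-module $\mathrm{Hom}(\theta_{s_1}\cdots\theta_{s_n},R)$ is graded free, and since $\theta_{s_1}\cdots\theta_{s_n}$ and $R$ both live in even degrees every homogeneous morphism has even degree, so this free module is of the form $\bigoplus_i n_iR(2i)$ with $n_i\in\mathbb{N}$. Exactly as in the proof of Proposition \ref{lumpen}, the equalities $\underline{\mathrm{rk}}\,R=1$, $\underline{\mathrm{rk}}(M(1))=v\,\underline{\mathrm{rk}}\,M$ and $q=v^{-2}$ give $\underline{\overline{\mathrm{rk}}}\bigl(\bigoplus_i n_iR(2i)\bigr)=\sum_i n_iq^i$; comparing with the displayed equation forces $n_i=m_i$ for all $i$, which is exactly the asserted isomorphism. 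There is no genuinely hard step here: all the content sits in Corollary \ref{inv} and in the graded-freeness of Hom-spaces, and the only points needing (elementary) care are the parity remark making all graded shifts even and the bookkeeping between $v$, $q=v^{-2}$ and the shift $R(2i)$ demanded by the normalization of Corollary \ref{inv}.
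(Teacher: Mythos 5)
Your proof is correct, and it is essentially the standard derivation: the paper itself does not prove Lemma \ref{grados} (it imports it as Corollary 4.2 of \cite{L}), but your argument — evaluating the coefficient of $T_1$ in $\eta(\langle\theta_{s_1}\cdots\theta_{s_n}\rangle)$ via Corollary \ref{inv}, invoking graded-freeness from \cite[theorem 5.15]{S3}, and matching graded ranks with the parity observation — is exactly the computation the paper reruns in the proof of Proposition \ref{lumpen}, and the same mechanism used in the cited source.
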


Using lemmas \ref{Hecke} and \ref{grados} we can conclude :
\begin{proposition}\label{gra}
If $ f \in \mathrm{Hom}(\theta_{sr}^r(n),R)$, then deg($f$)$\geq -2 [\frac{n-1}{2}],$ where $[\ ]$ stands for the floor function (the function that maps a real number to the next smallest integer).
\end{proposition}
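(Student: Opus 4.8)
The statement to prove is Proposition~\ref{gra}: every $f\in\mathrm{Hom}(\theta_{sr}^r(n),R)$ has degree $\geq -2[\frac{n-1}{2}]$. The natural strategy is to combine the two lemmas just recalled. By Lemma~\ref{grados}, applied to the sequence underlying $\theta_{sr}^r(n)=\underbrace{\cdots\theta_s\theta_r\theta_s}_{n}$, there is an isomorphism of graded right $R$-modules $\mathrm{Hom}(\theta_{sr}^r(n),R)\cong\bigoplus_i m_iR(2i)$, where the $m_i$ are determined by $\tau((1+T_{s_1})\cdots(1+T_{s_n}))=\sum_i m_iq^i$ for the relevant reduced word. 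A nonzero morphism of degree $d$ corresponds to a free generator in some $R(2i)$ with $-2i=d$ (up to sign conventions — I would first pin down the sign convention so that a generator of $R(2i)$ contributes a degree $-2i$ map, as in the proof of Proposition~\ref{lumpen}), so the minimal possible degree of a nonzero morphism is $-2i_{\max}$ where $i_{\max}=\max\{i : m_i\neq 0\}$. Hence it suffices to show $i_{\max}\le[\frac{n-1}{2}]$, i.e.\ $\deg_q\tau((1+T_{s_1})\cdots(1+T_{s_n}))\le[\frac{n-1}{2}]$.

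The second step is therefore a purely Hecke-algebraic degree bound, and this is exactly what Lemma~\ref{Hecke} delivers. Writing $(1+T_{s_1})\cdots(1+T_{s_n})=\sum_y p_yT_y$ with the word $s_1\cdots s_n$ alternating in $s,r$, Lemma~\ref{Hecke} gives $\deg p_y\le(n-l(y))/2$ when $ys<y$ and $\deg p_y\le(n-l(y)-1)/2$ when $ys>y$. Applying $\tau$ extracts the coefficient $p_1$ of $T_1$, i.e.\ $y=1$, the identity. Since $l(1)=0$, and since $1\cdot s>1$ (so the ``$ys>y$'' case applies to $y=1$), we get $\deg p_1\le(n-0-1)/2=(n-1)/2$. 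As $\deg p_1$ is an integer this gives $\deg p_1\le[\frac{n-1}{2}]$, hence $\deg_q\tau((1+T_{s_1})\cdots(1+T_{s_n}))=\deg p_1\le[\frac{n-1}{2}]$.

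Combining the two steps: $i_{\max}\le[\frac{n-1}{2}]$, so no nonzero element of $\mathrm{Hom}(\theta_{sr}^r(n),R)$ can have degree below $-2[\frac{n-1}{2}]$, which is the claim. The only genuinely delicate points are bookkeeping ones rather than conceptual: making sure the degree-shift/sign convention in Lemma~\ref{grados} is applied correctly (a graded generator in $R(2i)$ yields a morphism of degree $-2i$, not $+2i$), and checking that Lemma~\ref{Hecke} is being invoked with the correct ``side'' — the reduced word for $\theta_{sr}^r(n)$ reads right-to-left, so one should confirm the hypothesis ``$ys>y$ for $y=1$'' is the relevant one (it is, since any product of generators starting from the identity increases length). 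I expect the main obstacle, such as it is, to be purely this matching of conventions between the three cited statements; once they are aligned the argument is immediate.
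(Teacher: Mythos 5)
Your proof is correct and is precisely the argument the paper intends: the paper derives Proposition \ref{gra} directly by combining Lemma \ref{grados} (identifying the graded rank of the Hom space with $\tau((1+T_{s_1})\cdots(1+T_{s_n}))$) with Lemma \ref{Hecke} (bounding $\deg p_1$ via the case $y=1$, $ys>y$). Your careful handling of the shift convention, under which a generator of $R(2i)$ yields a morphism of degree $-2i$, is exactly the right bookkeeping.
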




 \subsection{}\label{utah} In this section we will explain two conjectural methods for finding $f_{sr}^2(n).$
 For this purpose  we have to introduce some morphisms ; we start by the Demazure operator $\partial_s:R\rightarrow R$, defined by $\partial_s(p)=\frac{p-s\cdot p}{2x_s}.$ We can define the following morphisms of graded $(R,R)$-bimodules :

\begin{displaymath}
\begin{array}{lll}
m_s : \theta_s\rightarrow R, && R\otimes_{R^s}R\ni p\otimes q \mapsto pq\\ \smallskip
  j_s: \theta_s\theta_s (2)\rightarrow \theta_s, && R\otimes_{R^s}R\otimes_{R^s}R \ni p\otimes q\otimes r \mapsto p\partial_s(q)\otimes r \in R\otimes_{R^s}R\\ \smallskip
\alpha_s:R\rightarrow \theta_s\theta_s(2) && 1\mapsto x_s\otimes 1\otimes 1+1\otimes 1\otimes x_s \in R\otimes_{R^s}R\otimes_{R^s}R 
\end{array}
\end{displaymath}
It is an easy consequence of the construction of the light leaves basis (LLB) in \cite{L} the fact that $f_{rs}\circ f_{sr}$ can be written as a linear combination of morphisms of the form $m_s,m_r,j_s,j_r,\alpha_s$ and $\alpha_r$ (eventually tensored, of course,  by the identity). For example, if $m(s,r)=2$ then $f_{rs}\circ f_{sr}=$id and if $m(s,r)=3$ then 
$$f_{rs}\circ f_{sr}=\mathrm{id}-\frac{1}{2\partial_s(x_r)} (\mathrm{id}\otimes m_r\otimes \mathrm{id}^2) \circ(\mathrm{id}\otimes  \alpha_r\otimes \mathrm{id})\circ(\mathrm{id}\otimes j_s)\circ (\alpha_s\otimes\mathrm{id})\circ j_s \circ (\mathrm{id}\otimes m_r\otimes \mathrm{id}) $$

\begin{conjecture} Let us fix an integer $n\in\mathbb{N}$. \begin{itemize}
\item Let us consider a Coxeter system $(W,\mathcal{S})$ with $s,r\in \mathcal{S}$ and $n=m(s,r)$. 
There exists a  set $\{\lambda_i^{s,r}\}_{i\in I}$ of rational functions in two variables and another  set $\{g_i^{s,r}\}_{i\in I}$ with each $g_i^{s,r}$ a composition of morphisms of the form $m_s,m_r,j_s,j_r,\alpha_s$ and $\alpha_r$, possibly tensored  by the identity, satisfying the equation \begin{equation}\label{trobon}f_{rs}\circ f_{sr}=\sum_{i\in I}\lambda_i^{s,r}(\partial_s(x_r),\partial_r(x_s)) g_i^{s,r},\end{equation} for all Soergel categories having $W$ as the underlying group. This means that  formula \ref{trobon} is true for any reflection faithful representation $V$ of $W$. We remark that the two scalars  $\partial_s(x_r)$ and $\partial_r(x_s)$ depend on $V$.
\item Let us consider a Coxeter system $(W',\mathcal{S'})$ with $s',r'\in \mathcal{S'}$ and $n<m(s',r')$. In any Soergel category associated to this group we have the formula $$f_{s'r'}^2(n)=\sum_{i\in I}\lambda_i^{s',r'}(\partial_{s'}(x_{r'}),\partial_r(x_{s'})) g_i^{s',r'}.$$
\end{itemize}
\end{conjecture}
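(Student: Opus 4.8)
The plan is to reduce everything to the combinatorics of the light leaves basis (LLB) of \cite{L} together with the structure of the dihedral Hecke category, isolating precisely how the underlying representation $V$ enters. First I would fix the abstract dihedral group $W=\langle s,r\rangle$ with $m(s,r)=n$ and work over a ``universal'' base: rather than a concrete reflection faithful $V$, introduce two indeterminates $X=\partial_s(x_r)$, $Y=\partial_r(x_s)$ and record that the relations satisfied by the generating morphisms $m_s,m_r,j_s,j_r,\alpha_s,\alpha_r$ (the one-colour Frobenius relations for each $t$, the two-colour relations governing $\theta_s\theta_r$) have structure constants that are polynomial in $X,Y$, at worst after inverting one fixed polynomial. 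The heuristic justification is that, up to the summand of $V$ on which $W$ acts trivially, a reflection faithful representation of the dihedral $W$ is determined by its $2\times 2$ Cartan-type data, i.e. by $X$ and $Y$; any honest $V$ is then recovered by specialising $X,Y$.

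Next I would expand $f_{rs}\circ f_{sr}\in\mathrm{End}(\theta_{sr}^l(n))$ in the light leaves basis. By \cite[theorem 5.15]{S3} the space $\mathrm{Hom}(\theta_{sr}^l(n),\theta_{sr}^l(n))$ is free as a right $R$-module with a basis of LLB morphisms, each of which is by construction a composition of the elementary morphisms $m_t,j_t,\alpha_t$ tensored with identities; since $f_{rs}\circ f_{sr}$ is of degree zero, its coefficients in this basis are scalars in $k$. Unwinding the normalisation condition defining $f_{sr}$ (as in Proposition \ref{calculo}, and its analogue in \cite{L} for general $n$) together with the universality statement of the previous step, these scalars should be rational functions in two variables evaluated at $(\partial_s(x_r),\partial_r(x_s))$, with the same rational functions valid for every $V$. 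This would establish the first bullet, taking $I$ to index the LLB morphisms that actually occur, and $g_i^{s,r}$ to be those morphisms.

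For the second bullet the strategy is to transport the formula. Given $(W',\mathcal{S}')$ with $m(s',r')>n$, form the element $\sum_{i\in I}\lambda_i^{s',r'}(\partial_{s'}(x_{r'}),\partial_{r'}(x_{s'}))\,g_i^{s',r'}\in\mathrm{End}(\theta_{s'r'}^l(n))$, where $g_i^{s',r'}$ is the \emph{same} word in $m_{t'},j_{t'},\alpha_{t'}$ read inside $\mathbf{B}(W',V')$. One must check this element is (a) of degree zero, (b) idempotent, and (c) factors through $B'_{s'r'(n)}$; uniqueness in Proposition \ref{lumpen} then forces it to equal $f_{s'r'}^2(n)$. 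Property (a) is formal. For (b) and (c) the point is that the identities among $m_t,j_t,\alpha_t$ needed to verify idempotency and the factorisation of $f_{rs}\circ f_{sr}$ in the small dihedral category never invoke the $2n$-valent morphism $f_{sr}$ itself; being relations in the Bott--Samelson part of the category, each of them continues to hold in $\mathbf{B}(W',V')$ after specialising $X,Y$ to $\partial_{s'}(x_{r'}),\partial_{r'}(x_{s'})$, and one also uses $sr(n)=s'r'(n)$ as Coxeter words together with the fact that $B'_{s'r'(n)}$ sits inside $\theta_{s'r'}^l(n)$ just as $B'_{sr(n)}$ did.

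The main obstacle is exactly this ``locality'' claim, and it is the reason the statement is a conjecture and not a theorem. For $n\le 3$ one can exhibit $\lambda_i,g_i$ explicitly (as in Section \ref{utah}) and check by hand that the transported formula satisfies (b) and (c); but for general $n$ there is no known closed form for $f_{rs}\circ f_{sr}$, so one cannot yet certify that its LLB expansion, re-read in a larger category, retains idempotency and the factorisation through $B'_{s'r'(n)}$. What is really needed is a presentation of the dihedral Hecke category by generators and relations in which the relations are visibly polynomial in $X,Y$ and in which the identities characterising $f_{rs}\circ f_{sr}$ do not require the $2n$-valent vertex among the generators. A secondary difficulty is bookkeeping of denominators: the rational functions $\lambda_i$ may have poles at special values of $(X,Y)$, consistent with $f_{sr}^2(n)$ not being defined over all fields, and one must show this bad locus is cut out by a single polynomial independent of $V$.
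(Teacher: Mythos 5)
The statement you were given is stated in the paper as a \emph{conjecture}: the paper offers no proof of it at all, only the remark that it holds for $n=2$ and $n=3$ together with the explicit formulas $f_{sr}^2(2)=\mathrm{id}$ and the displayed expression for $f_{sr}^2(3)$, and the observation in section \ref{utah} that $f_{rs}\circ f_{sr}$ can always be written as some linear combination of compositions of $m_s,m_r,j_s,j_r,\alpha_s,\alpha_r$ as a consequence of the light leaves construction of \cite{L}. So there is no argument in the paper to compare yours with, and your text, quite rightly, does not claim to settle the question either; it is a program, and you locate the obstruction in essentially the right place.

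To be concrete about where the genuine gaps are. (i) For the first bullet, expanding $f_{rs}\circ f_{sr}$ in a light leaves basis gives, for each \emph{fixed} reflection faithful $V$, an expression of the required shape with coefficients in $k$ (your parenthetical that the coefficients are scalars itself needs the fact that no negative-degree morphisms occur in $\mathrm{End}(\theta_{sr}^l(n))$, which should be checked rather than asserted). But the conjecture claims much more: that these scalars are values of one fixed rational function of the two numbers $\partial_s(x_r)$ and $\partial_r(x_s)$, the \emph{same} function for every reflection faithful $V$. Your ``universal base with indeterminates $X,Y$'' is exactly the missing ingredient, not something that follows from \cite[theorem 5.15]{S3} or from Proposition \ref{calculo}: in the generality of the paper ($k$ an arbitrary infinite field of characteristic $\neq 2$, $V$ of arbitrary dimension) a reflection faithful representation of a dihedral group is not simply ``its Cartan data up to a trivial summand'', and one would have to prove that all structure constants arising in compositions of $m_t,j_t,\alpha_t$ depend on $V$ only through $X,Y$, rationally and with a controlled denominator. (ii) For the second bullet, appealing to the uniqueness in Proposition \ref{lumpen} is the right idea, but verifying that the transported element of $\mathrm{End}(\theta_{s'r'}^l(n))$ is idempotent and factors through $B'_{s'r'(n)}$ is precisely the ``locality'' claim you flag: the relations available for the pair $(s',r')$ with $m(s',r')>n$ are those of a larger dihedral group, so identities used when $m(s,r)=n$ need not survive, and nothing in the paper (nor in the cited references) supplies this. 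In short, your proposal is a reasonable reduction of the statement to a universality property of the dihedral Bott--Samelson category, but both bullets remain unproven, which is consistent with the paper leaving the statement as a conjecture.
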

 
 \begin{remark}
 This conjecture is true for $n=2$ and $n=3$. In fact $f_{sr}^2(2)=$id and $$f_{sr}^2(3)= \mathrm{id}-\frac{1}{2\partial_s(x_r)} (\mathrm{id}\otimes m_r\otimes \mathrm{id}^2) \circ(\mathrm{id}\otimes  \alpha_r\otimes \mathrm{id})\circ(\mathrm{id}\otimes j_s)\circ (\alpha_s\otimes\mathrm{id})\circ j_s \circ (\mathrm{id}\otimes m_r\otimes \mathrm{id}) $$
\end{remark}

%
%

As we explained in the introduction, we believe that $f_{sr}^2(n)$ is the composition of two maps $f_{sr}(n)\in A_n(s,r)$ and $f_{rs}(n)\in A_n(r,s)$, that are not bimodule morphisms, but only left $R$-module morphisms. This gives us our second conjectural method for finding $f_{sr}^2(n).$

\begin{conjecture}Let us fix an integer $n\in\mathbb{N}$. \begin{itemize}
\item Let us consider a Coxeter system $(W,\mathcal{S})$ with $s,r\in \mathcal{S}$ and $n=m(s,r)$. 
There exist   sets $\{\mu_i^{s,r,w}\}_{i\in I,w\leq sr(n)}$ and $\{\nu_i^{s,r,w}\}_{j\in J,w\leq sr(n)}$ of rational functions in two variables  and  sets of polynomials in two variables $\{P_i^{s,r,w}\}_{i\in I,w\leq sr(n)}$ and $\{Q_i^{s,r,w}\}_{j\in J,w\leq sr(n)}$, satisfying the equations \begin{equation}\label{aaa2}\partial_w(d)=\sum_{i\in I}\mu_i^{s,r,w}(\partial_s(x_r),\partial_r(x_s)) P_i^{s,r,w}(x_s,x_r)\hspace{0.8cm}\mathrm{for\ all\ }w\leq sr(n),\end{equation} \begin{equation}\label{aaa3}\partial_w(d)^*=\sum_{j\in J}\nu_i^{s,r,w}(\partial_s(x_r),\partial_r(x_s)) Q_i^{s,r,w}(x_s,x_r)\hspace{0.8cm}\mathrm{for\ all\ }w\leq sr(n)\end{equation} for all Soergel categories having $W$ as the underlying group.
\item Let us consider a Coxeter system $(W',\mathcal{S'})$ with $s',r'\in \mathcal{S'}$ and $n<m(s',r')$. Let us fix any Soergel category associated to this group. We define $f_{s'r'}(n)$ as a slight modification of the right-hand side of the formula of proposition \ref{calculo}. Namely, we replace $(s,r)$ in this formula by $(s',r')$, we make the sum to range over the set $\{w\leq s'r'(n)\}$  and we replace  $ \partial_w(d)$ and $\partial_w(d)^*$ by the right-hand side of equations \ref{aaa2} and \ref{aaa3} (again replacing $(s,r)$ by $(s',r')$ everywhere). Then $f_{s'r'}(n)\in A_n(s',r')$  is not an $(R,R)$-bimodule morphism, but only a left $R$-module morphism. 
In the same way we can obtain $f_{r's'}(n)\in A_n(r',s')$. We conjecture the formula $f_{s'r'}^2(n)=f_{r's'}(n)\circ f_{s'r'}(n). $
\end{itemize}
\end{conjecture}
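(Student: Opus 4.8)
We indicate how we would attack this conjecture; the first bullet is in fact accessible by a direct computation, whereas the genuinely conjectural point is the identity $f_{s'r'}^2(n)=f_{r's'}(n)\circ f_{s'r'}(n)$ of the second bullet. First one reduces everything to a two-variable problem. Fix a dihedral system $(W,\mathcal S)$ with $m(s,r)=n$ and any reflection faithful $V$. The subspace $U^{\ast}=\mathrm{span}(x_s,x_r)\subseteq V^{\ast}$ is two-dimensional and stable under $W=\langle s,r\rangle$, and the action of the generators on it is $s\!\cdot\! x_s=-x_s$, $s\!\cdot\! x_r=x_r-2\partial_s(x_r)\,x_s$, $r\!\cdot\! x_r=-x_r$, $r\!\cdot\! x_s=x_s-2\partial_r(x_s)\,x_r$, so it is completely controlled by the two scalars $a:=\partial_s(x_r)$ and $b:=\partial_r(x_s)$. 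Since $t=wsw^{-1}$ forces $Y_t\ni w\!\cdot\! x_s$, one may choose each $x_t$ ($t\in\mathcal T$) inside $U^{\ast}$; hence $d=\prod_{t\in\mathcal T}x_t\in k[x_s,x_r]=:S$, and $\partial_s,\partial_r$, the symmetrising form $\hat t$ and the entire apparatus of Theorem~\ref{erudito} and Proposition~\ref{calculo} live inside $S$.

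Carrying this out: each $x_t$ is a word in $s,r$ applied to $x_s$ or $x_r$, hence a linear combination of $x_s,x_r$ with coefficients in $\mathbb Z[a,b]$, so $d$ is a homogeneous element of $S$ with coefficients in $\mathbb Z[a,b]$; and since $\partial_s,\partial_r$ map $\mathbb Q[a,b][x_s,x_r]$ into itself, every $\partial_w(d)$ lies in $\mathbb Q[a,b][x_s,x_r]$. Collecting monomials in $x_s,x_r$ gives equation~(\ref{aaa2}), with the $\mu_i^{s,r,w}$ in this case even polynomial in $a,b$. For the dual basis I would form the Gram matrix $G=\big(\hat t(\partial_w(d)\partial_{w'}(d))\big)_{w,w'}$; a degree count (using $\sum_{w\in W}l(w)=n^2$) shows $\det G$ has degree zero in $x_s,x_r$, so it is a polynomial purely in $a,b$, and it is nonzero because the form $\hat t(\cdot\,\cdot)$ is nondegenerate --- for example it is nonzero for the geometric representation. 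Inverting $G$ and restricting to the relevant homogeneous pieces --- routine linear algebra over $\mathbb Q(a,b)$ --- writes each $\partial_w(d)^{\ast}$ in the form~(\ref{aaa3}), the only denominators being powers of $\det G(a,b)$; these are exactly the ``denominators'' mentioned before Proposition~\ref{lumpen}, and they account for the failure of the construction over some fields.

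For the second bullet, the substitution $a\mapsto\partial_{s'}(x_{r'})$, $b\mapsto\partial_{r'}(x_{s'})$, $x_s\mapsto x_{s'}$, $x_r\mapsto x_{r'}$ turns the dihedral formulas just obtained into well-defined elements of $R(V')$ for every $w\le s'r'(n)$, and feeding them into the formula of Proposition~\ref{calculo} produces a map $f_{s'r'}(n)\in A_n(s',r')$. It is left $R$-linear by inspection of the formula (the outermost operation is left multiplication by $p_0$), and it is not a bimodule morphism in general precisely because the substituted polynomials are not the true $\partial_w(d),\partial_w(d)^{\ast}$ of the ambient category $\mathbf{B'}$. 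To identify the composite $F:=f_{r's'}(n)\circ f_{s'r'}(n)$ with $f_{s'r'}^2(n)$ I would invoke the uniqueness of Proposition~\ref{lumpen}: it suffices to prove that $F$ is (i) an honest degree-zero bimodule endomorphism of $\theta_{s'r'}^l(n)$, (ii) idempotent, and (iii) factors through $B'_{s'r'(n)}$. The degree statement in (i) is immediate from the construction, and, granting (i), properties (ii) and (iii) should follow from the fact that $B'_{s'r'(n)}$ occurs with multiplicity one in $\theta_{s'r'}^l(n)$ (which follows from $p_{sr(n)}=1$ in Lemma~\ref{Hecke}) together with the fact that $F$, being assembled from the transplanted $f_{sr}$ and $f_{rs}$, carries $1\otimes\cdots\otimes1$ to an element with the correct leading term, forcing $\mathrm{Im}\,F\cong B'_{s'r'(n)}$; here Lemmas~\ref{Hecke} and~\ref{grados} control the relevant maps to $R$.

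The hard part --- and the reason this is only a conjecture --- is establishing (i): that the composite of two maps that are individually \emph{not} bimodule morphisms \emph{is} a bimodule morphism. The obvious attempt is a specialisation argument: regard $a,b$ as independent parameters over a ``generic'' dihedral Soergel-type category, observe that at every $(a,b)$ for which $\langle s',r'\rangle$ has order $2n$ the maps $f_{s'r'}(n)$ and $f_{r's'}(n)$ become honest morphisms and $F$ specialises to the genuine $f_{r's'}\circ f_{s'r'}=f_{s'r'}^2(n)$, and try to propagate the bimodule-morphism relations --- a system of polynomial identities in $a,b$ --- from that locus to all $(a,b)$, in particular to the values coming from $V'$. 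The gap is that the ``order $2n$'' locus is only a curve in the $(a,b)$-plane, so vanishing there does not force vanishing identically; closing it would need an extra structural ingredient, for instance a recursive construction of $F$ analogous to that of the Jones--Wenzl projectors, or a reconciliation with the universal expression of $f_{rs}\circ f_{sr}$ conjectured in equation~(\ref{trobon}), which would exhibit $f_{s'r'}^2(n)$ directly as a combination of the honest morphisms $m_\bullet,j_\bullet,\alpha_\bullet$. This is where I expect the real work to lie.
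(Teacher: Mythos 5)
The statement you are asked about is stated in the paper as a \emph{conjecture}, and the paper supplies no proof of it whatsoever; there is nothing to compare your argument against. Your proposal correctly diagnoses this: you treat the first bullet as a computation and you explicitly refuse to claim the identity $f_{s'r'}^2(n)=f_{r's'}(n)\circ f_{s'r'}(n)$ of the second bullet, naming the precise obstruction (showing that the composite of two maps that are individually only left $R$-linear is a genuine degree-zero $(R,R)$-bimodule idempotent factoring through $B'_{s'r'(n)}$, so that the uniqueness in Proposition \ref{lumpen} applies). That obstruction is real, and your observation that the admissible locus $n=m(s',r')$ is too small in the $(a,b)$-plane to propagate polynomial identities to the case $n<m(s',r')$ is exactly why this cannot be settled by a naive specialisation argument. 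So the ``gap'' you flag is not a defect of your write-up; it is the content of the conjecture.

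Two smaller points on the part you do claim. First, your reduction of the first bullet to the two-dimensional subspace $U^{\ast}=\mathrm{span}(x_s,x_r)$ is the right idea and the degree count $\deg\det G=2m\lvert W\rvert-4\sum_w l(w)=0$ is correct, but you should check that for an arbitrary reflection faithful $V$ (possibly of dimension $>2$) the symmetrising form $\hat t$ and the dual basis $\{\partial_w(d)^{\ast}\}$ really stay inside $k[x_s,x_r]$: the dual basis is computed over $R^W$, not over $k[x_s,x_r]^W$, so one must verify that the $R^W$-expansion of an element of $k[x_s,x_r]$ in the basis $\{\partial_w(d)\}$ has coefficients in $k[x_s,x_r]^W$, and that these coefficients are the universal polynomials in $a=\partial_s(x_r)$, $b=\partial_r(x_s)$ that the conjecture requires, independently of $V$. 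Second, nonvanishing of $\det G$ at one point (the geometric representation) shows the universal $\det G(a,b)$ is a nonzero polynomial, but for equation \eqref{aaa3} to hold ``for all Soergel categories'' you need $\det G(a,b)\neq 0$ at every admissible $(a,b)$; this does follow, as you could note, from the fact that $\hat t$ is a symmetrising form for each such $V$ separately, so that $G$ is invertible over $R^W$ and its degree-zero determinant is a nonzero scalar. With these caveats your assessment is sound: the first bullet is plausibly a theorem, the second bullet remains open, and the paper itself offers no more than you do.
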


\subsection{}

In this section we prove a generalization of corollary \ref{fsr}. 

\begin{proposition}\label{inclusion} We have the following inclusion :
 $$f_{sr}^2(n)(1\otimes_{R^s}\theta_r\theta_s\cdots)\subseteq 1\otimes_{R^s}\theta_r\theta_s\cdots$$
\end{proposition}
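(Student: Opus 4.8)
The plan is to reduce this statement to the already-established corollary \ref{fsr}, exploiting that $f_{sr}^2(n)$ factors through the indecomposable bimodule $B'_{sr(n)}$ and that its defining property pins it down up to scalar on each one-dimensional Hom space. First I would recall the factorization $f_{sr}^2(n) = in \circ \pi$ with $\pi : \theta_{sr}^l(n) \twoheadrightarrow B'_{sr(n)}$ and $in : B'_{sr(n)} \hookrightarrow \theta_{sr}^l(n)$, both of degree zero, as supplied by Proposition \ref{lumpen}. The statement to prove is about the image of the submodule $1 \otimes_{R^s} \theta_r\theta_s\cdots \subseteq \theta_{sr}^l(n) = \theta_s\theta_r\theta_s\cdots$, and both source and target of $f_{sr}^2(n)$ are the same bimodule $\theta_{sr}^l(n) = R \otimes_{R^s} (\theta_r\theta_s\cdots)$, so the assertion is that $f_{sr}^2(n)$ preserves the left-ideal-like submodule $1 \otimes_{R^s}(\cdots)$, equivalently that it descends to a map on the quotient (or that composing with the inclusion $R^s$-linearly behaves well on the left).

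The cleanest route I would take is to embed the dihedral situation with $m(s,r) = m \geq n$ into one where $m(s,r) = n$ exactly: pass to a Coxeter system $(W', \mathcal{S}')$ with $\mathcal{S}' = \{s,r\}$ and $m(s,r) = n$, where there is an honest degree zero morphism $f_{sr}^{\mathbf{B}'} \in DZ_{sr}$ in that smaller category. In $\mathbf{B}'$, Corollary \ref{fsr} applies verbatim and gives $f_{sr}^{\mathbf{B}'}(1 \otimes_{R^s} \theta_r\theta_s\cdots) \subseteq R^s \otimes_{R^r} 1 \otimes \cdots \subseteq 1 \otimes_{R^s}\theta_r\theta_s\cdots$, and similarly $f_{rs}^{\mathbf{B}'}$ has the analogous property; composing, $f_{rs}^{\mathbf{B}'} \circ f_{sr}^{\mathbf{B}'} = f_{sr}^2(n)$ (computed in $\mathbf{B}'$) satisfies exactly the inclusion we want. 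The remaining point is to transfer this identity of morphisms from $\mathbf{B}'$ back to the ambient $\mathbf{B}$: both $f_{sr}^2(n)$'s are characterized as the unique degree zero idempotent in $\mathrm{End}(\theta_{sr}^l(n))$ factoring through $B'_{sr(n)}$ (Proposition \ref{lumpen}), and the subbimodule $\theta_{sr}^l(n)$ together with $B'_{sr(n)}$ and all the structure maps only depend on the restriction of the representation to the parabolic $\langle s, r \rangle$, which is the same dihedral data in both categories. Concretely, I would argue that the formula for $f_{sr}$ from Proposition \ref{calculo} is "polynomial in the local data" (the forms $x_s, x_r$ and the operators $\partial_s, \partial_r$), hence the composite $f_{rs}\circ f_{sr}$ restricted to $B'_{sr(n)}$ gives the same element whether computed in $\mathbf{B}$ or $\mathbf{B}'$.

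Alternatively, and perhaps more self-contained, I would prove the inclusion directly from the factorization: it suffices to show that $\pi(1 \otimes_{R^s} \theta_r\theta_s\cdots)$ and $in$ interact so that the composite lands in $1\otimes_{R^s}(\cdots)$. For this I would use that $\mathrm{Hom}(B'_{sr(n)}, \theta_{sr}^l(n))$ is one-dimensional in degree zero (shown in the proof of Proposition \ref{lumpen}), so $in$ is, up to scalar, the unique such map; one checks on a generator that the unique degree zero map $B'_{sr(n)} \to \theta_{sr}^l(n)$ has image inside $1 \otimes_{R^s}(\cdots)$, which follows because $B'_{sr(n)}$ is generated in its lowest degree and the image of that lowest-degree piece can be computed explicitly — it sits in $R^s \otimes_{R^r} 1 \otimes_{R^s} 1 \otimes \cdots$, mirroring the behaviour in Corollary \ref{fsr}. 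Then $f_{sr}^2(n) = in \circ \pi$ automatically has image in $1\otimes_{R^s}(\cdots)$ regardless of what $\pi$ does, since $in$ already does.

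The main obstacle I anticipate is the transfer/comparison step: making rigorous that $f_{sr}^2(n)$ computed in the large category agrees (as a morphism of the subbimodule $\theta_{sr}^l(n)$) with the composite of the genuine braid morphisms in a category where $m(s,r) = n$. This requires knowing that the morphism spaces $\mathrm{Hom}(\theta_{sr}^l(n), B'_{sr(n)})$ and $\mathrm{Hom}(B'_{sr(n)}, \theta_{sr}^l(n))$, together with $B'_{sr(n)}$ itself, depend only on the dihedral parabolic data — which is morally clear (Soergel's classification is local on parabolics) but should be cited carefully, e.g. via the explicit description of $B'_{sr(n)}$ for dihedral groups used already in the proof of Proposition \ref{etru}. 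If the author prefers to avoid this, the direct argument via the one-dimensionality of $\mathrm{Hom}(B'_{sr(n)}, \theta_{sr}^l(n))$ and an explicit computation on the bottom-degree generator of $B'_{sr(n)}$ sidesteps the comparison entirely, at the cost of one concrete calculation in the dihedral Hecke category.
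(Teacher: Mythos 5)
Both routes you propose contain genuine gaps. The first route hinges on transferring the identity $f_{rs}^{\mathbf{B}'}\circ f_{sr}^{\mathbf{B}'}=f^2_{sr}(n)$ from an auxiliary category $\mathbf{B}'$ with $m(s,r)=n$ back to $\mathbf{B}$. But the ``dihedral data'' is \emph{not} the same in the two categories: in $\mathbf{B}$ the parabolic $\langle s,r\rangle$ is dihedral of order $2m(s,r)>2n$, while in $\mathbf{B}'$ it has order $2n$, so the rings $R$ and $R'$, the reflections $\mathcal{T}$, and hence the elements $\partial_w(d)$ and $\partial_w(d)^*$ appearing in the formula of Proposition \ref{calculo}, live in genuinely different algebras with no canonical identification. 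Making this transfer rigorous is precisely the content of the two conjectures of Section \ref{utah} (that $f_{rs}\circ f_{sr}$ admits a ``universal'' expression in $m_s,m_r,j_s,j_r,\alpha_s,\alpha_r$ with coefficients rational in $\partial_s(x_r),\partial_r(x_s)$, resp.\ that $f^2_{s'r'}(n)=f_{r's'}(n)\circ f_{s'r'}(n)$ for the formal, non-bimodule maps $f_{s'r'}(n)$), and these are left open in the paper. So this route assumes an unproven conjecture.

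The second route is based on a false claim: the image of $in:B'_{sr(n)}\hookrightarrow\theta^l_{sr}(n)$ cannot be contained in $1\otimes_{R^s}\theta_r\theta_s\cdots$. Indeed $\mathrm{im}(in)$ is a nonzero sub-\emph{bimodule}, whereas $1\otimes_{R^s}\theta_r\theta_s\cdots$ contains no nonzero left $R$-submodule: if $1\otimes m$ lies in a left $R$-submodule contained in this set, then $x_s\otimes m$ must again be of the form $1\otimes m'$, which forces $m=0$ (decompose along $R=R^s\oplus x_sR^s$ and use $\mathrm{char}\,k\neq 2$). Note that Corollary \ref{fsr}, which you want to mirror, asserts a containment for the image of the \emph{restriction} of $f_{sr}$ to $1\otimes_{R^s}(\cdots)$, not for $\mathrm{im}(f_{sr})$; so ``$in$ already lands there, regardless of what $\pi$ does'' cannot be the mechanism. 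The paper instead proves the stronger statement that \emph{every} degree-zero endomorphism of $\theta^l_{sr}(n)$ preserves $1\otimes_{R^s}\theta_r\theta_s\cdots$ when $n<m(s,r)$: by Lemma \ref{Hecke} zero is the minimal degree in $\mathrm{End}(\theta^l_{sr}(n))$, one produces a left $R$-basis of this space from the light leaves basis via the adjunction, and a degree count shows that every degree-zero basis element is a composition of the maps $m_s,m_r,j_s,j_r$ only (no braid morphism can occur since the intermediate widths never reach $m(s,r)$); these maps visibly preserve $1\otimes_{R^s}(\cdots)$. Some argument at this level of detail about the structure of degree-zero morphisms seems unavoidable.
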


\begin{proof}
 We will prove a more general result, namely that any degree zero morphism $f$  in End($\theta_{sr}^l(n)$) satisfies  the property 
$$f(1\otimes_{R^s}\theta_r\theta_s\cdots)\subseteq 1\otimes_{R^s}\theta_r\theta_s\cdots \ \ \ \ \ \ \ \ \mathrm{(property\ }\ast\ast)$$

We already know that this proposition is true if $n=m(s,r)$ (see \ref{fsr}), so from now on we will suppose $n<m(s,r)$. By lemma \ref{Hecke} we have that zero is the minimal possible degree of a morphism in  End($\theta_{sr}^l(n)$). So to prove property $\ast\ast$ it is enough to prove it for the degree zero morphisms of a basis of the space End($\theta_{sr}^l(n)$) as left $R$-module.

From now on we will make heavy use of notations and results in \cite{L}. For $s\in\mathcal{S}$ we define the morphism  $$\alpha_s^i=\mathrm{id}^k\otimes \alpha_s \otimes \mathrm{id}^i \in\mathrm{Hom}(\theta_{r_1}\cdots \theta_{r_k}\theta_{t_1}\cdots \theta_{t_i}, \theta_{r_1}\cdots \theta_{r_k}\theta_s\theta_s\theta_{t_1}\cdots \theta_{t_i})$$

A slight modification of the following lemma can be found in \cite[lemme 3.3]{L}, or with a more detailed proof in \cite[lemma 5.16]{li3}. 

\begin{lemma}\label{bijection}
For $M,N \in \mathbf{B}$, the morphism 
\begin{displaymath}
\begin{array}{lll}
 \mathrm{Hom}(M\theta_s,N) &\rightarrow &  \mathrm{Hom}(M, N\theta_s )(2) \\
\ \ \ \ \ \ \ \ \ f &\mapsto &(f\otimes \mathrm{id}_{\theta_s}) \circ (\mathrm{id}_M \otimes \alpha_s)     
\end{array}
\end{displaymath}
is an isomorphism of left graded $R$-modules.
\end{lemma}

Let $L=\{f_{\bar{i}}\}_{i\in I}$ be a light leaves basis (LLB) of Hom$(\theta_{sr}^l(n)\theta_{sr}^r(n),R) $ as defined in \cite[theorem 5.1]{L} (it is a basis as left $R$-module). We have that the sequence $(s_i)_i$ defined in \cite[section 4.5]{L} is   $(s_1,s_2,\ldots, s_n,s_{n+1},\ldots,s_{2n})=(s,r,s,\ldots,t,t,\ldots,s,r,s)$, where $t=r$ if $n$ is even and $t=s$ if $n$ is odd..  We chose this LLB such that the $P(n,\bar{t})$ (see again  \cite[section 4.5]{L}) is a sequence of minimal length for all couples $(n,\bar{t})$. We will see in the sequel that there is only one LLB of Hom$(\theta_{sr}^l(n)\theta_{sr}^r(n),R) $ satisfying the preceding property, and in fact all this sequences $P(n,\bar{t})$ are trivial (they do not play any role in defining $L$).

 By lemma \ref{bijection} we deduce that the set 
$$\{(f_{\bar{i}}\otimes \mathrm{id}_{\theta^l_{sr(n)}})\circ \alpha_s^n\circ \alpha_r^{n-1}\circ \alpha_s^{n-2}\circ \cdots \circ \alpha_{t}^0 \}_{i\in I}$$
is a basis of End($\theta_{sr}^l(n)$) as left $R$-module.

Given the form of $\alpha_s$, our problem reduces to prove that  for an element $f_{\bar{i}}$ of the LLB of Hom$(\theta_{s_1}\cdots\theta_{s_{2n}},R)$  of degree $-2n$, we have 
\begin{equation}\label{coco}f_{\bar{i}}(1\otimes_{R^s}\theta_{s_2}\theta_{s_3}\cdots\theta_{s_{2n-1}}\otimes_{R^s}1)\subseteq R^s\hspace{3cm}\end{equation}

We will follow the notation of \cite[pp.17]{L} with the only exceptions that the morphisms $m^s:\theta_s\rightarrow R$ and $i_1^s:\theta_s\theta_s\rightarrow \theta_s$ defined in \cite[pp. 12]{L} will be called here $m_s$ and $j_s$ in concordance with the paper \cite{li3} and with section \ref{utah}  . We will  prove the following lemma. We put,  $\bar{i}=(i_1,\ldots,i_{2n})\in\{0,1\}^{2n}$, and $$f_{\bar{i}}=f^{j_{2n}}_{i_{2n},2n}\circ \cdots \circ f^{j_{1}}_{i_{1},1}(\mathrm{id}).$$
We note $f_{\bar{i}}^k=f^{j_{k}}_{i_{k},k}\circ \cdots \circ f^{j_{1}}_{i_{1},1}(\mathrm{id}).$

If $M\in \mathbf{B} $ and $a\in\mathrm{Hom}(M,\theta_{t_1}\cdots \theta_{t_k})$ we note $\varpi(a)=k$. If $M\in \mathbf{B} $ and $a\in\mathrm{Hom}(M,R)$, we note $\varpi(a)=0.$

\begin{lemma}
\begin{enumerate}
 \item  $f_{\bar{i}}$ is a composition of morphisms of the form $m_s, m_r, j_s$ and $j_r$ tensored on both sides by the identity.
\item For all $1\leq k< 2n$,  $\varpi(f_{\bar{i}}^k)\neq 0.$
\end{enumerate}
\end{lemma}

\begin{proof}
 For all $0\leq i,j\leq 1$,$\ $ $1\leq l \leq 2n$ and $a\in \mathrm{Hom}(\theta_{s_1}\cdots \theta_{s_{l-1}},\theta_{t_1}\cdots \theta_{t_k}) $ the definition of $f^j_{i,l}(a)$ gives the inequality \begin{equation}\label{chronopost}\vert\varpi(f^j_{i,l}(a))-\varpi(a)\vert\leq 1.\end{equation}
So if (1) is not true, we have that for some $1\leq k \leq 2n$, there is an $f_{sr}$ or an $f_{rs}$ tensored on both sides by the identity in the composition of morphisms defining $f^{j_{k}}_{i_{k},k}$. This would mean that $\varpi(f^{k-1}_{\bar{i}})= m(s,r)$, but this equation together with (\ref{chronopost}) gives that $k\geq m(s,r)$. Then $2n-k\leq 2n-m(s,r)<m(s,r)$, this last inequality because we have supposed $n<m(s,r).$ Finally $2n-k<m(s,r)$ and the equation (\ref{chronopost}) contradict the fact that $\varpi(f_{\bar{i}})=\varpi(f_{\bar{i}}^{2n})=0.$ So we have proved (1).

Now we prove (2) by reduction to absurd. Let $1\leq k< 2n$ be such that   $\varpi(f_{\bar{i}}^k)= 0.$ We assume  that $1\leq k\leq n$ ; the case $n\leq k\leq 2n$ has a similar proof.
By the first part of the lemma  we know that $k$ must satisfy  $s_k=s$, so $k$ is odd, say $k=2q+1$. By proposition \ref{gra} we have that deg($f_{\bar{i}}^k$)$\geq-2q$.

By construction of the LLB, we have that $f_{\bar{i}}=f_{\bar{i}}^k\otimes g,$ where $g$ is an element of the LLB of Hom$(\theta_{s_{k+1}}\cdots\theta_{s_{2n}},R).$ By hypothesis, deg($f_{\bar{i}}$)$=-2n$, so deg($f_{\bar{i}}^k$)$+$deg($g$)$=-2n$, but because of proposition \ref{gra} we know that \begin{displaymath}
\begin{array}{lll}
\mathrm{deg}(g)&\geq &-2-2[(2n-(k+1)-1)/2] \\
&=& -2n+2q+2
\end{array}
\end{displaymath}
 so deg($f_{\bar{i}}^k$)$+$deg($g$)$\geq -2q+(-2n+2q+2)=-2n+2$, which is absurd and allows us to finish the proof of part (2).\end{proof}

We can now finish the proof of the inclusion (\ref{coco}). Because of (1) and (2) we can see by induction in $k$ that for $1\leq k<2n$ we have $f_{\bar{i}}^k(\theta_{s_1}\cdots \theta_{s_k})\subseteq \theta_{t_1}\cdots\theta_{t_p}$ with $t_1=s_1=s$.
So we have \begin{displaymath}
\begin{array}{lll}
f_{\bar{i}}&=& f^{j_{2n}}_{i_{2n},2n}\circ f_{\bar{i}}^{2n-1}\\
&=& (m_s\circ j_s) \circ (f_{\bar{i}}^{2n-1}\otimes \mathrm{id}_{\theta_s})
\end{array}
\end{displaymath}

But because of (1) and of the explicit form of $m_s, m_r, j_s$ and $j_r$ given section \ref{utah},   for $1\leq k<2n$  we have $f_{\bar{i}}^k(1\otimes_{R^s}\theta_{s_2}\cdots \theta_{s_k})\subseteq 1\otimes_{R^s}\theta_{t_2}\cdots\theta_{t_p},$ so in particular 
$f_{\bar{i}}^{2n-1}(1\otimes_{R^s}\theta_{s_2}\cdots \theta_{s_{2n-1}})\subseteq 1\otimes_{R^s}R\in \theta_s$. We conclude that

\begin{displaymath}
\begin{array}{lll}
f_{\bar{i}}(1\otimes_{R^s}\theta_{s_2}\cdots\theta_{s_{2n-1}}\otimes_{R^s}1)&\subseteq & (m_s\circ j_s)(1\otimes_{R^s}R\otimes_{R^s}1) \\
&=& \partial_s(R)\\
&=&  R^s
\end{array}
\end{displaymath}
which proves the inclusion (\ref{coco}) and thus proposition \ref{inclusion}.
 \end{proof}

\subsection{Definition and properties of $D_w$}

Let us generalise definition \ref{iriarte}.

 \begin{definition}\label{iriarten}
We define the morphism $$^i\hspace{-0.09cm}f^2(k)=\mathrm{id}^i\otimes f^2_{sr}(k)\otimes \mathrm{id}^{n-i-k}\in\mathrm{Hom}(\theta_{s_1}\cdots \theta_{s_n},\theta_{t_1}\cdots \theta_{t_n}),$$ whenever this makes sense. 
\end{definition}

\begin{notation}\label{oshoporosho} Even though there does not exist a bimodule morphism $^i\hspace{-0.09cm}f(k)$ we will use the notation $^i\hspace{-0.09cm}f^2(k)=^i\hspace{-0.15cm}f(k)\circ ^i\hspace{-0.15cm}f(k).$ 
\end{notation}

\begin{definition}
A morphism $g$  between $\theta_{s_1}\cdots \theta_{s_n}$ and $\theta_{t_1}\cdots \theta_{t_n}$ is of G$f$-type (or generalized $f$-type) if there exists a tuple $(i_1,\ldots, i_k)$ such that $$g=^{i_k}\hspace{-0.17cm}h_k\circ \cdots\circ ^{i_2}\hspace{-0.17cm}h_2\circ ^{i_1}\hspace{-0.17cm}h_1,$$ where each $h_j$ is either $f$ or $f(k)$ for some integer $k$.
\end{definition}

It is a -quite long but easy- exercise to  repeat section \ref{ole} and all section \ref{dromo}  replacing everywhere cores by ELGcores (see definition \ref{cores}) and $f$-type by G$f$-type. All definitions are straightforward in this context and all lemmas, propositions and the theorem have almost the same proofs. So we can define a special morphism of G$f$-type, G$f_{\bar{s}}=$ $^{i_k}\hspace{-0cm}h_k\circ \cdots\circ ^{i_2}\hspace{-0.17cm}h_2\circ ^{i_1}\hspace{-0.17cm}h_1\in \mathrm{End}(\theta_{\bar{s}})$, characterized by the fact that, if $\bar{i}=(i_k,\ldots, i_1)$, then  $N_{\bar{i}}(C)\neq 0$ for all $C\in \mathrm{ELGcores}(x)$. 

As in section \ref{olaila} we can show that G$f_{\bar{s}}$ exists and it is an idempotent uniquely defined by the preceding property. So if $\bar{s}$ is a reduced expression of $w$, we can define $D_{\bar{s}}$ as the  image of G$f_{\bar{s}}$ shifted by $l(w)$. In formulae : $D_{\bar{s}}=$G$f_{\bar{s}}(\theta_{\bar{s}})(l(w))$.

\begin{theorem}\label{estranho}
 \begin{enumerate}
  \item If $\bar{s}$ and $\bar{t}$ are two reduced expressions of $w\in W$ then $D_{\bar{s}}$ is isomorphic to  $D_{\bar{t}}$. We call this (well-defined up to isomorphism) bimodule $D_w$.
\item The indecomposable Soergel bimodule $B_w$ is a direct summand of $D_w$ and the set $\{\eta(\langle D_w\rangle)\}_{w\in W}$ is a basis of the Hecke algebra.
 \end{enumerate}
\end{theorem}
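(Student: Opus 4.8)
The plan is to mirror closely the proofs of Theorem \ref{cototisimo} and Proposition \ref{etru}, exploiting the remark (already made in the excerpt) that Sections \ref{ole} and \ref{dromo} go through verbatim after replacing $f$-type by G$f$-type and cores by ELGcores. The two ingredients that make those proofs work are an analogue of Proposition \ref{dimmensdale} and an analogue of Theorem \ref{huge}; the former is supplied by Proposition \ref{inclusion} (this is exactly why that proposition was proved), and the latter is part of the "long but easy exercise" alluded to before the theorem. Once G$f_{\bar{s}}$ is known to be a well-defined idempotent, part (1) follows the template of Theorem \ref{cototisimo}: for $\bar{s},\bar{t}$ differing by a single braid move one forms $a_{\bar{s},\bar{t}}=f'_{\bar{t}}\circ F_{\bar{s},\bar{t}}\circ i_{\bar{s}}$ using the G$f$-type morphism $F_{\bar{s},\bar{t}}$ induced by that braid move, and the identity $\mathrm{G}f_{\bar{s}}\circ F_{\bar{t},\bar{s}}\circ \mathrm{G}f_{\bar{t}}\circ F_{\bar{s},\bar{t}}\circ \mathrm{G}f_{\bar{s}}=\mathrm{G}f_{\bar{s}}$ — the analogue of \eqref{superette} — gives $a_{\bar{t},\bar{s}}\circ a_{\bar{s},\bar{t}}=\mathrm{id}_{D_{\bar{s}}}$; composing along a path through all reduced expressions yields the isomorphism for arbitrary $\bar{s},\bar{t}$.

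For part (2), the argument splits into a summand claim and a basis claim, both following Proposition \ref{etru}. For the summand claim, I would again pass to the fraction field $K$ of $R$ and write $\overline{M}=K\otimes_R M$. The key local fact is that $f^2_{sr}(n)$ factors through $B'_{sr(n)}$ (Proposition \ref{lumpen}), so after applying $K\otimes_R(-)$ the map $\overline{f^2_{sr}(n)}$ factors through $\overline{B'_{sr(n)}}$; since $K_{sr(n)}$ is a direct summand of $\overline{B'_{sr(n)}}$ (as Soergel's conjecture holds for dihedral groups, exactly as in the proof of Proposition \ref{etru}) and appears with multiplicity one in $\overline{\theta^l_{sr}(n)}$ by Lemma \ref{ochoporocho}, one concludes $K_{sr(n)}$ is a direct summand of $\overline{f^2_{sr}(n)(\theta^l_{sr}(n))}$. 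Using $K_u\otimes_R K_v\cong K_{uv}$ one propagates this to get $K_w$ a direct summand of $\overline{D_w}$ with multiplicity one. Then, writing $\theta_{\bar{s}}=D_w\oplus Y=B'_w\oplus X$ and applying $K\otimes_R(-)$, Krull--Schmidt together with the fact (from \cite[satz 6.16]{S3}) that $K_w$ is a direct summand of $\overline{B'_w}$ and has multiplicity one in $\overline{\theta_{\bar{s}}}$ forces $B'_w$ to be a summand of $D_w$ rather than of $Y$; shifting by $l(w)$ gives that $B_w$ is a summand of $D_w$.

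For the basis claim I would invoke Corollary \ref{inv}: since $D_w$ is a genuine Soergel bimodule, $\eta(\langle D_w\rangle)=\sum_v p_v T_v$ with $p_v\in\mathbb{N}[v,v^{-1}]$ (this is the positivity half of Soergel's result). Because $K_w$ appears in $\overline{D_w}$ with multiplicity exactly one, Lemma \ref{ochoporocho} gives $p_w(1)=1$, hence $p_w=1$, so $\eta(\langle D_w\rangle)=T_w+\sum_{v<w}p_v T_v$ is unitriangular with respect to the order on $W$; unitriangular families are bases, which finishes the proof. The step I expect to be the genuine obstacle is not any of these decategorified manipulations — they are near-verbatim copies of Proposition \ref{etru} — but rather the honest verification, suppressed in the excerpt as an "exercise", that Theorem \ref{huge} survives the replacement of $f$-type by G$f$-type and cores by ELGcores: one must check that the commutation relations among the various $^i\hspace{-0.09cm}f$ and $^i\hspace{-0.09ch}f^2(k)$ still let one normalize an arbitrary G$f$-type expression to the canonical one, which requires knowing that $^i\hspace{-0.09cm}f^2(k)$ interacts with its neighbours exactly as $^i\hspace{-0.09cm}f$ does — and this is precisely where Proposition \ref{inclusion} (the generalization of Corollary \ref{fsr}) does the work, guaranteeing the crucial $1\otimes_{R^s}(-)$ inclusions on which the commutation arguments of Propositions \ref{dimmensdale} and \ref{aristoteles} rest.
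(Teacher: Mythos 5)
Your proposal is correct and follows exactly the route the paper intends: the paper's own proof consists of the single remark that part (1) is "similar to" Theorem \ref{cototisimo} and part (2) "similar to" Proposition \ref{etru}, with the G$f$-type analogues of Theorem \ref{huge} and Proposition \ref{dimmensdale} delegated to the "long but easy exercise" stated just before the theorem. Your expansion — including the role of Proposition \ref{inclusion} as the replacement for Corollary \ref{fsr} in the commutation arguments, and the shift by $l(w)$ relating $B'_w$ to $B_w$ — is a faithful and correctly detailed version of that same argument.
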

\begin{proof}
 The proof of part 1) is similar to that of \ref{cototisimo} and the proof of part 2) is similar to that of proposition \ref{etru}.
\end{proof}

\textbf{Some examples :} 
\begin{enumerate}
 \item  For all couples $(s,r)\in \mathcal{S}^2$ we have that   $\eta(\langle D_{x}\rangle)=C'_{x}$  if $x\leq \underbrace{srs\cdots}_{m(s,r)\ \mathrm{terms}}$ and $l(x)\neq 3.$ 
\item Let $s,r,t,u,v$ be different elements of $\mathcal{S}$ such that $m(s,r)=4$ and  $m(s,t)=4$. Then  $\eta(\langle D_{x}\rangle)=C'_{x}$  if $x= srsrtst$, but $\eta(\langle D_{x}\rangle)\neq C'_x$ if $x=srsruvsrsr.$
\end{enumerate}

 We believe that $D_w$ will categorify $C'_w$ in a bast range of cases but not in all cases (as the preceding examples show). It would be interesting to know exactly for what $w$ this is the case.

\end{document}